\newcommand{\ecuno}{\epsilon_1(\deg V, A^\erre,\eta)}
\newcommand{\ecunoa}{\epsilon_1}
\newcommand{\ecdue}{\epsilon_2(\deg V, A^\g,\eta)}
\newcommand{\ecdueb}{\epsilon_2}
\newcommand{\ef}{\Phi}
\newcommand{\elle}{\mathcal{L}}
\newcommand{\cbh}{{K_0}}
\newcommand{\stabv}{{\rm{Stab}}\,\,V}
\newcommand{\codv}{\cod V}
\newcommand{\cosipa}{}
\newcommand{\alfa}{\alpha}
\newcommand{\berta}{\beta}
\newcommand{\acca}{a}
\newcommand{\bitta}{b}
\newcommand{\Bittaa}{b}
\newcommand{\emme}{m}
\newcommand{\pippo}{\psi}
\newcommand{\fiffo}{\phi}
\newcommand{\enne}{|\tilde\fiffo|}
\newcommand{\ti}{b}
\newcommand{\expm}{{{d}\cosipa+1}}
\newcommand{\coeta}{{}}
\newcommand{\coefa}{{d}\cosipa+\eta}
\newcommand{\czero}{{c_0}}
\newcommand{\cuno}{c_1}
\newcommand{\emor}{\rm End }
\newcommand{\indice}{k}
\newcommand{\rend}{\mathcal{E}}
\newcommand{\erre}{{\underline{r}}}
\newcommand{\esse}{{\underline{s}}}
\newcommand{\g}{{\underline{g}}}
\newcommand{\cod}{\rm cod }
\newcommand{\qe}{\mathbb{Q}}
\newcommand{\re}{\mathbb{R}}
\newcommand{\ze}{\mathbb{Z}}
\newcommand{\Hom}{\mathrm{Hom}}
\newcommand{\upxi}{\zeta}
\newcommand{\varieta}{V}
\newcommand{\punto}{y}
\newcommand{\equattro}{\varepsilon_2}
\newcommand{\ecinque}{\varepsilon_1}
\newcommand{\kzero}{{K_0}}
\newcommand{\kdue}{K_0+||p||}
\newtheorem{con}{Conjecture}[section]
\newtheorem{propo}{Proposition}[section]
\newtheorem{lem}{Lemma}[section]
\newtheorem{cor}{Corollary}[section]
\newtheorem{thm1}[propo]{Theorem}
\newtheorem{thm}{Theorem}[section]
\newtheorem{D}{Definition}[section]
\title[{ The normalized height and non-density statements} ]
{Lower bounds for the normalized height  and non-dense subsets of  varieties in an abelian variety }
\author[{ Evelina Viada}]{  
 }
\begin{document}

\maketitle
\centerline{Evelina Viada\footnote{Evelina Viada,
     Universit\'e de Fribourg Suisse, P\'erolles, D\'epartement de Math\'ematiques, 23 Chemin du Mus\'ee, CH-1700 Fribourg,
Suisse,
    evelina.viada@unifr.ch.}
    \footnote{Supported by the SNF (Swiss National Science Foundation).}
\footnote{Mathematics Subject Classification (2000): 11J95, 14K12, 11G50 and 11D45.\\
Key words: Abelian Varieties, Subvarieties, Heights, Diophantine approximation}}

\begin{abstract}
This work is the third part of a series of papers. In the first two we consider curves and  varieties in a power of an elliptic curve. Here we deal with subvarieties  of an  abelian variety in general. 

Let $V$ be an irreducible  variety of dimension $d$ embedded in an abelian variety $A$, both defined over the algebraic numbers. We say that $V$ is weak-transverse if $V$ is not contained in any proper algebraic subgroup of $A$, and transverse if it is not contained in any translate of such a subgroup.

Assume a conjectural lower bound for the normalized height of   $V$. For $V$ transverse, we  prove that  the  algebraic points  of bounded height of  $V$   which lie in the union of all algebraic subgroups of $A$ of codimension at least  $ d+1$ translated by the  points close to a subgroup $\Gamma$ of finite rank are   non Zariski-dense in  $V$.  If $\Gamma$ has rank zero, it is sufficient to assume that $V$ is  weak-transverse.
 The notion of closeness is defined using a height function.
 
 \end{abstract}

\section{introduction}

All varieties in this article are defined over $\overline{\qe}$. 
Denote by  $A$  a abelian  variety     of
dimension $g$. Consider an irreducible algebraic  subvariety   $V$
of  $A$ of dimension $d$.  
We say that 
\begin{itemize}
 \item $V$ is {\it transverse}, if $V$ is not contained in any
translate of a proper algebraic subgroup of $A$.

\item  $V$ is
{\it weak-transverse}, if $V$ is not contained in  any proper algebraic
subgroup of $A$.
\end{itemize}
As we are going to consider only  algebraic points, we denote by $A=A(\overline\qe)$ and $V=V(\overline{\qe})$.
For a subset $S$ of $A$, we denote by $\overline{S}$ its Zariski closure.
Given  a subset $V^e$ of $V$, an integer $ \indice$ with $1 \le \indice \le g$ and a subset $F$ of $A $, we define the set
\begin{equation}
\label{skv}
S_{\indice}(V^e,F)=V^e \cap \bigcup_{\mathrm{cod}B \ge \indice} B+F, 
\end{equation}
where $B$ varies over all abelian  subvarieties of $A$ of
codimension at least $\indice$ and $$B+F=\{b+f \,\,\,: \,\,\,b\in B, \,\,\,f\in F\}.$$ We  denote the set $S_{\indice}(V^e, A_{\rm Tor})$
simply  by $S_{\indice}(V^e)$, where $A_{\rm Tor}$ is the torsion of $A$.

Nowadays a vast number of theorems and conjectures claim the non-density of sets of the type (\ref{skv}).
 Among others, we recall the  Manin-Mumford, Mordell, 
Mordell-Lang,  Bogomolov and Zilber-Pink Conjectures.  For more literature one can look in the references  of  \cite{io} or \cite{irmn}.

Consider on $A$  a semi-norm $||\cdot||$ induced by a   height function. 
For $\varepsilon \ge 0$, we define $$\mathcal{O}_{\varepsilon}=\{ \xi \in A  : ||\xi|| \le \varepsilon\}$$ 
(Note that in the literature, often, the notation $\mathcal{O}_\varepsilon$ corresponds to the set, we denote in this work, $\mathcal{O}_{\varepsilon^2}$).
Let $\Gamma$  be a subgroup of finite rank in  $A $.  We denote
$\Gamma_{\varepsilon}=\Gamma + \mathcal{O}_\varepsilon.$

Following the
 school  of Bombieri, Masser and Zannier \cite{BMZ}, \cite{BMZ1}, \cite{BMZ2}, one can state the following:
 \begin{con}
\label{gen}
There exists  $\varepsilon>0$ such that: 

\begin{enumerate}

\item  If  $V$ is weak-transverse, then  $S_{d+1}(V,\mathcal{O}_\varepsilon)$  is non Zariski-dense in $V$.

\item  If $V$ is transverse, then  $S_{d+1}(V,\Gamma_\varepsilon)$ is non Zariski-dense in $V$.
\end{enumerate}
\end{con}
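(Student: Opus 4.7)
The plan is to argue by contradiction, exploiting the conjectural lower bound on the normalized height of $V$ that the abstract promises as the main hypothesis. Assuming (for concreteness in case (ii)) that $S_{d+1}(V,\Gamma_\varepsilon)$ is Zariski-dense in $V$ for every $\varepsilon>0$, the goal is to produce from this density a subvariety of $V$ (or of a suitable quotient of $A$) whose normalized height is forced to be arbitrarily small, contradicting the assumed lower bound whenever $\varepsilon$ is smaller than an explicit threshold depending on $\deg V$, $\dim V$ and the rank of $\Gamma$.

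First I would carry out a geometric reduction. Each point $v$ contributing to $S_{d+1}(V,\Gamma_\varepsilon)$ is of the form $v=b+\gamma+\eta$ with $b\in B$, $\gamma\in\Gamma$, $||\eta||\le\varepsilon$ and $\mathrm{cod}\,B\ge d+1$. Consider the quotient $\pi_B\colon A\to A/B$: since $\mathrm{cod}\,B>\dim V$, the image $\pi_B(V)$ is positive-dimensional, and transversality of $V$ implies that for each fixed $B$ the image $\pi_B(V)$ is not contained in any translate of a proper subgroup of $A/B$. The identity $\pi_B(v)=\pi_B(\gamma)+\pi_B(\eta)$ then says that $\pi_B(v)$ is $\varepsilon$-close to a point of the finite rank subgroup $\pi_B(\Gamma)$ in $A/B$.

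Next I would control the rank contribution. Using a Minkowski/box argument on $\Gamma\otimes\mathbb{R}$, after an isogeny one groups the relevant $\gamma$'s (modulo a controlled perturbation) into finitely many classes, reducing to the situation in which a translate $V-\gamma_0$ has a Zariski-dense set of algebraic points of normalized height bounded by a function of $\varepsilon$ alone. Applying the conjectural Bogomolov-type lower bound to the corresponding image of $V-\gamma_0$ in $A/B$ then contradicts density once $\varepsilon$ is chosen below the resulting threshold. Case (i) is the special case $\Gamma=\{0\}$: no translation is needed, the Minkowski step is vacuous, and weak-transversality of $V$ is enough to force $\pi_B(V)$ to be weak-transverse in $A/B$.

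The main obstacle is uniformity in $B$. The abelian subvarieties $B\subset A$ of codimension at least $d+1$ form an infinite family, so one cannot simply apply the assumed height lower bound to each quotient $\pi_B(V)$ separately. The standard remedy is to restrict to a maximal such $B$ for each relevant point, to bound $\deg B$ and the degree of an isogeny trivializing $B$ in terms of $\deg V$ via Bézout together with the behaviour of the normalized height under isogenies, and then to balance the three small parameters — the approximation error $\varepsilon$, the rank of $\Gamma$, and the implicit dependence on $\deg B$ — so that a single threshold $\varepsilon_0=\varepsilon_0(V,\Gamma)$ works uniformly. This tripartite balancing is where the technical heart of the argument lies and where the exponents of the form $\beta_1$, $K_1$ appearing in the macros of the paper will inevitably surface.
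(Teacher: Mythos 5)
There is a genuine gap, and it is exactly the one the paper isolates and deliberately does \emph{not} close. The statement you are trying to prove is Conjecture~\ref{gen}, which the paper states as a conjecture and never proves; what the paper actually proves is Theorem~\ref{main}, namely that Conjecture~\ref{bofu1} implies the weaker Conjecture~\ref{nd}, where the variety $V$ has already been replaced by $V_{K_0}=V\cap\mathcal{O}_{K_0}$, i.e.\ by the set of points of \emph{bounded height} on $V$. The passage from $S_{d+1}(V,\Gamma_\varepsilon)$ to $S_{d+1}(V_{K_0},\Gamma_\varepsilon)$ is precisely the content of the separate Bounded Height Conjecture~\ref{hb}, which the paper states but leaves entirely open. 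Your proposal does not address this: the sentence ``reducing to the situation in which a translate $V-\gamma_0$ has a Zariski-dense set of algebraic points of normalized height bounded by a function of $\varepsilon$ alone'' is exactly the assertion that is missing a proof. The elements of $\Gamma$ have unbounded height (unless $\Gamma$ is torsion), and points of a fixed $B$ also have unbounded height, so a point $v=b+\gamma+\eta\in S_{d+1}(V,\Gamma_\varepsilon)$ with $\|\eta\|\le\varepsilon$ can have arbitrarily large height. A Minkowski/box argument on $\Gamma\otimes\mathbb{R}$ cannot compress infinitely many $\gamma$'s of unbounded height into finitely many classes modulo a controlled perturbation, and the same problem already occurs in case (i) with $\Gamma=\{0\}$ because the family of $B$'s is infinite and the translates by torsion or by $\mathcal{O}_\varepsilon$ still allow unbounded-height intersections.

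Once you add the bounded-height hypothesis, the approach you sketch is roughly in the spirit of the paper's step~(1)/(2) decomposition, but the mechanisms differ. Where you propose to quotient by a maximal $B$ and bound $\deg B$ via B\'ezout, the paper instead normalizes the defining morphism $\phi$ of the algebraic subgroup to a weighted form, approximates $\phi/|\phi|$ by a rational morphism of bounded norm using Dirichlet's Theorem (Lemma~\ref{dicov}, Theorem~\ref{centro}), and then applies the Bogomolov-type bound~\ref{bofu1} not just to an image in a quotient but to both $\phi(V+y)$ and the associated isogeny image $\Phi(V+y)$ (Proposition~\ref{EM}, Theorem~\ref{finito}), treating the two regimes $|\phi|\le m$ and $|\phi|\ge m$ separately. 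Absent a proof of Conjecture~\ref{hb}, however, neither route proves Conjecture~\ref{gen}; your proposal should be recast as an attempt at Conjecture~\ref{nd}, where the bounded-height reduction is available by hypothesis.
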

For $\varepsilon=0$ this conjecture gives  special cases of the Zilber-Pink Conjecture.
In view of several works, at present,  it is clear that such a conjecture can be split  in two parts; one for the height and the other for the non-density property.

 \begin{con}[Bounded Height Conjecture]
\label{hb}
There exists  $\varepsilon>0$
  and a non empty Zariski-open set $V^0 \subset V$ such that:

\begin{enumerate}
\item  If $V$ is weak-transverse, then   $S_{d+1}(V^0,\mathcal{O}_\varepsilon)$  has bounded height.

\item  If $V$ is transverse, then   $S_{d+1}(V^0,\Gamma_\varepsilon)$ has bounded height.
\end{enumerate}
\end{con}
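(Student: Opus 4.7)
The plan is to follow the Bombieri-Masser-Zannier strategy together with its abelian extension due to Habegger. First, statement (i) is the special case $\Gamma=\{0\}$ of (ii), so I would concentrate on (ii). The geometric starting point is: for every abelian subvariety $B\subset A$ of codimension $\geq d+1$, the quotient map $\pi_B:A\to A/B$ satisfies $\dim(A/B)\geq d+1>d=\dim V$, hence $\pi_B|_V$ is never dominant and $\pi_B(V)$ is a proper subvariety of $A/B$. This is what ultimately forces projected points to have bounded height.

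Next I would construct $V^0$ by removing the \emph{anomalous locus}: the union of those proper closed subvarieties $W\subsetneq V$ contained in a translate $g+H$ of a proper algebraic subgroup $H$ with $\dim\bigl(V\cap(g+H)\bigr)$ abnormally large compared to $\dim V-\mathrm{cod}\,H$. The abelian analogue of the Bombieri-Masser-Zannier structure theorem guarantees that this union is a proper Zariski-closed subset of $V$, so $V^0$ is a non-empty Zariski-open in the transverse case; in the weak-transverse case one removes instead the union of proper subvarieties contained in a proper algebraic subgroup.

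For a point $p\in V^0$ of the form $p=b+\gamma+\xi$ with $b\in B$, $\gamma\in\Gamma$ and $||\xi||\leq\varepsilon$, I would apply $\pi_B$: the image $\pi_B(p)=\pi_B(\gamma)+\pi_B(\xi)$ lies in $\pi_B(V)\cap\bigl(\pi_B(\Gamma)+\mathcal{O}_{C\varepsilon}\bigr)$, where $C$ depends only on the polarization. By construction of $V^0$, the image $\pi_B(V)$ remains weak-transverse in $A/B$, so applying the assumed lower bound for the normalized height on $A/B$ (a R\'emond-style quantitative Bogomolov inequality for the projected variety) yields a bound $\hat h(\pi_B(p))\leq c_0$ provided $\varepsilon$ is chosen sufficiently small. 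To recover a bound on $\hat h(p)$ itself, I would split $A$ up to isogeny as $B\oplus B'$, decompose $p$ accordingly, and estimate $\hat h(p)$ in terms of $\hat h(\pi_B(p))$ and $||b||$; the latter is controlled by the hypothesis $p-\gamma\in B+\mathcal{O}_\varepsilon$ together with the quadratic nature of the N\'eron-Tate form.

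The hard part will be \textbf{uniformity in $B$}: the single constant $\varepsilon$ and the single Zariski-open $V^0$ must work for the infinite family of all abelian subvarieties of codimension $\geq d+1$. This forces a uniform estimate on $\deg\pi_B(V)$ in terms of $\deg V$ and the polarization, and a uniform description of the anomalous locus across the whole family of quotients. Reconciling the varying geometry of $B$ with a single open set $V^0$ on which the height inequality applies, while keeping the normalized-height lower bound effective, is the technical core of the argument.
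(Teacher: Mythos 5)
The statement you are trying to prove is Conjecture~\ref{hb}, which the paper does not prove: it is stated as an open conjecture and the paper's theorems (Theorems~\ref{equii} and~\ref{main}) are explicitly only about the \emph{other} two conjectures, \ref{gen} and~\ref{nd}. So there is no proof in the paper to compare yours against; I can only assess your outline on its own terms, and it has two serious problems.

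First, your reduction of~(i) to~(ii) via $\Gamma=\{0\}$ runs in the wrong logical direction. Weak-transversality is \emph{weaker} than transversality, so the set of varieties to which~(i) applies is strictly larger than the set to which~(ii) with $\Gamma=0$ applies. The paper says this explicitly after Theorem~\ref{equii}: for Conjecture~\ref{hb}, ``it is true that i.\ implies ii., but the reverse does not hold in general.'' The direction you want, (ii)~$\Rightarrow$~(i), fails because when one writes a weak-transverse $V'$ up to isogeny as $V\times p$ with $V$ transverse in a smaller abelian variety $A^{\g}$, the algebraic subgroups of $A^{\underline n}$ of codimension $\ge d+1$ project to subgroups of $A^{\g}$ of smaller codimension, so the $S_{d+1}$ set for $V'$ is \emph{not} controlled by the $S_{d+1}$ set for the pair $(V,\Gamma)$. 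The correct implication goes the other way and is exactly the $x\mapsto(x,\gamma)$ embedding machinery of Proposition~\ref{speciali}.

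Second, the engine you invoke to get bounded height --- ``a R\'emond-style quantitative Bogomolov inequality for the projected variety'' --- does not yield an upper bound on the height of $\pi_B(p)$. A lower bound on the essential minimum $\mu(\pi_B(V))$ says only that the set of points of $\pi_B(V)$ whose height is below that threshold is not Zariski-dense; it says nothing about whether points of height far above that threshold occur. In your setup $\pi_B(p)=\pi_B(\gamma)+\pi_B(\xi)$ and $\hat h(\gamma)$ is unbounded as $\gamma$ ranges over $\Gamma$, so there is simply no step in your sketch that actually produces the bound $\hat h(\pi_B(p))\le c_0$. The bounded-height statement requires a completely different input (a functional/geometric study of how height behaves under the full family of projections $\pi_B$, in the style of Habegger), not an effective Bogomolov bound. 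This conflation of ``small points are non-dense'' with ``points have bounded height'' is the central gap, and the paper itself keeps the two phenomena in separate conjectures (\ref{hb} and \ref{nd}) for exactly this reason.
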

For $\cbh\ge0$, we denote
$$V_\cbh=V \cap \mathcal{O}_\cbh.$$

 \begin{con}[Non-density Conjecture]
\label{nd}

For all reals $\cbh$, there exists  an effective $\varepsilon>0$ such that: 
\begin{enumerate}

\item  If  $V$ is weak-transverse, then $S_{d+1}(V_\cbh,\mathcal{O}_\varepsilon)$  is non Zariski-dense in $V$.

\item  If $V$ is transverse, then   $S_{d+1}(V_\cbh,\Gamma_\varepsilon)$ is non Zariski-dense in $V$.
\end{enumerate}
\end{con}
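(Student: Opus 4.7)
The plan is to deduce the Non-density Conjecture from the Bounded Height Conjecture \ref{hb} together with the hypothesized lower bound for the normalized height of subvarieties of $A$. I sketch the transverse case (ii); the weak-transverse case ($\Gamma = 0$) runs in parallel. Granting Conjecture \ref{hb}(ii) and fixing $\cbh$, there exist a Zariski-open $V^0 \subset V$ and $\euno > 0$ (depending on $V$, $\Gamma$) such that $S_{d+1}(V^0, \Gamma_{\euno}) \subset V_{\kuno}$ for some $\kuno$ depending on $V$, $\Gamma$, $\cbh$. Taking $\varepsilon \leq \euno$, the inclusion
\[
S_{d+1}(V_\cbh, \Gamma_\varepsilon) \;\subseteq\; (V \setminus V^0) \,\cup\, S_{d+1}(V_\cbh \cap V^0, \Gamma_\varepsilon)
\]
reduces the task to non-density of the second summand, whose points $p$ enjoy $||p|| \leq \kuno$.

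For such $p$, choose an abelian subvariety $B$ with $\cod B \geq d+1$ and write $p = b + \gamma + \zeta$ with $b \in B$, $\gamma \in \Gamma$, $||\zeta|| \leq \varepsilon$. Let $\pi \colon A \to A/B$: since $\dim \pi(V) \leq d < \dim(A/B)$, the image $W = \pi(V)$ is a \emph{proper} subvariety of $A/B$. The hypothesized normalized-height lower bound, applied to $W$ --- possibly after passing to a weak-transverse component, and using the Poincar\'e decomposition of $A$ to reduce to finitely many isogeny classes for pairs $(A/B, W)$ --- provides a uniform essential-minimum estimate $\epsilon(W) > 0$. Simultaneously $\pi(p) = \pi(\gamma + \zeta)$ lies in a small $\varepsilon$-neighbourhood of $\pi(\Gamma)$ in $A/B$, with $||\pi(p)||$ controlled by $\kuno$ up to a projection constant.

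A Vojta--R\'emond-type fibration inequality on $W \subset A/B$, or equivalently a direct Bogomolov comparison against $\epsilon(W)$, then forces $\pi(p)$ into a \emph{proper} subvariety of $W$ as soon as $\varepsilon$ falls below an explicit threshold polynomial in $\epsilon(V)/(g\kuno)$, matching the shape of $\bounda$ already named in the paper's notation. Pulling back by $\pi$ and intersecting with $V$ carves out a proper closed subset of $V$. The main obstacle is \emph{uniformity in $B$}: the family of codimension-$\geq d+1$ abelian subvarieties is infinite, so one must combine the finiteness of isogeny types (via Poincar\'e) with the constraint $||p|| \leq \cbh$ to guarantee only finitely many distinct proper closed subsets of $V$ actually arise; their union is the required non-dense set containing $S_{d+1}(V_\cbh, \Gamma_\varepsilon)$.
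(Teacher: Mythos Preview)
Your proposal has a genuine gap at the crucial point, and it also assumes more than the paper does.

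First, the invocation of the Bounded Height Conjecture \ref{hb} is superfluous: the statement of Conjecture \ref{nd} already restricts to $V_{\cbh}$, so every point under consideration has norm at most $\cbh$ from the outset. The paper's conditional proof (Theorem \ref{main}) assumes only the functorial Bogomolov bound (Conjecture \ref{bofu1}); bringing in a second open conjecture weakens your result without buying anything.

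Second, and more seriously, your mechanism for uniformity in $B$ does not work. You write that ``finiteness of isogeny types (via Poincar\'e)'' combined with $||p||\le\cbh$ will force only finitely many proper closed subsets of $V$ to arise. But the isogeny type of $A/B$ does not determine $B$: already in $E^2$ there are infinitely many distinct abelian subvarieties of codimension $1$, all with quotient isogenous to $E$. For each such $B$ your construction produces a different proper closed subset $\pi^{-1}(\text{Bogomolov locus})\cap V$, and you give no argument that their union is not Zariski-dense. The vague appeal to a ``Vojta--R\'emond-type fibration inequality'' does not address this, and the bounded-height constraint on $p$ alone cannot reduce the family of relevant $B$'s to a finite one.

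The paper resolves this by an entirely different device. After reducing to weighted and then special morphisms (Lemma \ref{peso}, Proposition \ref{speciali}), it applies Dirichlet's approximation theorem to the entries of the morphism $\tilde\phi$ defining $B$: Theorem \ref{centro} shows that every special $\tilde\phi$ can be replaced by one of norm $\ll M$, at the cost of enlarging $\varepsilon$ by a bounded factor. Since there are only finitely many morphisms of norm $\le M$, this yields a \emph{finite} union. Each term of that finite union is then shown to be non-Zariski-dense via Conjecture \ref{bofu1}, by a two-case argument (Theorem \ref{finito}) splitting on whether $|\phi|$ is below or above an explicit threshold $m$ and using, respectively, the essential minimum of $\phi(V+y)$ and of $\Phi(V+y)$ for an associated isogeny $\Phi$. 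Your sketch contains neither the Diophantine approximation step nor the two-regime Bogomolov argument, and without the former the proof cannot close.
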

In the first instance (see section \ref{equivalenza}), we prove:
\begin{thm}
\label{equii}
Conjecture \ref{nd} i. and ii. are equivalent.
\end{thm}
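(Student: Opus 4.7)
My plan is to prove the two implications separately via auxiliary constructions in product abelian varieties.

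For direction (i)$\Rightarrow$(ii), let $V \subset A$ be transverse and let $\gamma_1, \ldots, \gamma_r \in A$ generate $\Gamma$ modulo torsion. Put $\gamma := (\gamma_1, \ldots, \gamma_r) \in A^r$ and let $H \subset A^r$ be the smallest abelian subvariety containing $\gamma$. Set $\tilde A := A \times H$ and $\tilde V := V \times \{\gamma\} \subset \tilde A$. I claim $\tilde V$ is weak-transverse in $\tilde A$: if $\tilde V \subset \tilde H$ for some proper algebraic subgroup $\tilde H$, then from $\tilde V - \tilde V = (V - V) \times \{0\} \subset \tilde H$ together with the fact that $V - V$ generates $A$ as an abelian subvariety (by transverseness of $V$), we obtain $A \times \{0\} \subset \tilde H^0$; translating by $(v_0, \gamma) \in \tilde V$ and using closure of $\tilde H$ under addition yields $A \times \{n\gamma\} \subset \tilde H$ for all $n \in \mathbb{Z}$, whence $A \times H \subset \tilde H$ by minimality of $H$, contradicting properness. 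Given $p = b_B + \gamma_p + \eta \in S_{d+1}(V_{K_0}, \Gamma_\varepsilon)$ with $\gamma_p = \sum n_i \gamma_i + \tau$ ($\tau$ torsion), the lift $\tilde p := (p, \gamma) \in \tilde V$ lies within $\mathcal{O}_\varepsilon$ of the algebraic subgroup
\[
\tilde B_n := \{(x, y_1, \ldots, y_r) \in \tilde A : x - \textstyle\sum n_i y_i \in B + \mathbb{Z}\tau\},
\]
whose codimension in $\tilde A$ equals the codimension of $B + \mathbb{Z}\tau$ in $A$, which is $\cod B \ge d+1$ (torsion absorption preserves codimension). Thus $\tilde p \in S_{d+1}(\tilde V_{\tilde K_0}, \mathcal{O}_\varepsilon)$ with $\tilde K_0 := K_0 + \sum \|\gamma_i\|$, and applying (i) to $\tilde V$ yields non-density that descends via the projection $\tilde V \to V$.

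For direction (ii)$\Rightarrow$(i), if $V$ is transverse apply (ii) with $\Gamma = 0$. Otherwise let $A' \subsetneq A$ be the smallest abelian subvariety with $V \subset b + A'$ for some $b$; then $V' := V - b$ is transverse in $A'$, and writing $A \sim A' \oplus A''$ via Poincaré, the image $c := \pi_{A''}(b) \in A''$ lies in no proper algebraic subgroup of $A''$ by weak-transverseness of $V$. For $p = b_B + \eta \in S_{d+1}(V_{K_0}, \mathcal{O}_\varepsilon)$, the argument splits into two cases. In the complementary case $B + A' = A$, a dimension count gives that $B \cap A'$ has codimension in $A'$ equal to $\cod B$ in $A$, hence $\ge d+1$; after using a section of $\pi_{A''}|_B$ over $c$, the translate $p - b \in V'$ is shown to lie in $S_{d+1}(V'_{K'}, \Gamma'_{c'\varepsilon})$ for an appropriate rank-$1$ subgroup $\Gamma' \subset A'$, and applying (ii) to $V'$ in $A'$ with $\Gamma'$ yields non-density that translates back. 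In the non-complementary case, projection of the condition to $A''$ forces $c$ close to a proper algebraic subgroup of $A''$, contradicting genericity of $c$ for $\varepsilon$ sufficiently small.

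The principal obstacle is the weak-transverseness check in direction (i)$\Rightarrow$(ii), which requires the precise choice $\tilde A = A \times H$ with $H$ minimal: in $A^{r+1}$ the variety $V \times \{\gamma\}$ is typically not weak-transverse due to algebraic dependencies among the $\gamma_i$, and the construction only succeeds once this is corrected by taking the product with $H$ instead. The torsion-absorption step $B \rightsquigarrow B + \mathbb{Z}\tau$ is routine but essential to preserve codimension. In direction (ii)$\Rightarrow$(i), the delicate steps are the case split on complementarity of $B$ relative to $A'$ and the genericity argument controlling the distance from $c$ to proper subgroups of $A''$ uniformly in the relevant family.
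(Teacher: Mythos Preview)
Your overall architecture matches the paper's: the implication (i)$\Rightarrow$(ii) is handled by passing from $V$ to $V\times\gamma$ in a larger abelian variety, and (ii)$\Rightarrow$(i) by writing a weak-transverse variety as $V'\times\{c\}$ after an isogeny and pushing the intersection data down to $V'$. The first direction is, as the paper also notes, the elementary one, and your argument for it is essentially correct (modulo the minor point that ``the smallest abelian subvariety containing $\gamma$'' need not exist unless one first replaces $\gamma$ by a suitable multiple so that $\gamma$ lies in the identity component of $\overline{\mathbb{Z}\gamma}$).

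The genuine gap is in your (ii)$\Rightarrow$(i), in the complementary case. Two issues. First, $\Gamma'$ is not rank $1$: the correct group is the saturation of $\{\psi(c):\psi\in\Hom(A'',A')\}$ in $A'$ (this is the paper's $\Gamma_p^{\g}$), and its rank is governed by the number of common simple factors of $A'$ and $A''$, typically $>1$. Second, and more seriously, your constant $c'$ in $\Gamma'_{c'\varepsilon}$ is not uniform in $B$ as written. A ``section of $\pi_{A''}|_B$ over $c$'' depends on $B$, and its norm (equivalently, the ratio $|\phi'|/|\phi|$ when one writes $B=\ker(\phi\,|\,\phi')$) is not a priori bounded. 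The paper resolves this by the quantitative geometry-of-numbers bound $c(p)\,|\phi_0|\ll\|\phi_0(p-\xi)\|$ (Proposition~\ref{ret}/Corollary~\ref{cor3}): combined with the height bound $\|x\|\le K_0$ it yields $|\phi'|\ll_{K_0,c}|\phi|$, after which the error term becomes $\ll\varepsilon$ uniformly. This is \emph{the same} quantitative bound that you invoke as ``genericity of $c$'' to exclude the non-complementary case; what you are missing is that it must be applied a second time, inside the complementary case, to control the size of the section. Without it, the inclusion $p-b\in S_{d+1}(V'_{K'},\Gamma'_{c'\varepsilon})$ holds only with $c'$ depending on $B$, and you cannot appeal to (ii) for a single fixed $\varepsilon'$.
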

That i. implies ii. is quite elementary. The other implication is delicate.
It is worth to note that, on the contrary, Conjecture \ref{hb} i. and ii. are not equivalent. It is true that i. implies ii., but the reverse does not hold in general.

In their work, Bombieri, Masser and Zannier, introduce the toric analogue of Conjecture \ref{gen} ii. for  $\Gamma=0$ and $\varepsilon=0$. They present a method to tackle the non-density question  based on the use of Siegel's Lemma and of  the Generalized Lehmer Conjecture (see \cite{fra1}). 

In our previous works \cite{io} and \cite{irmn} we present a different  method for varieties in a power of an elliptic curve. Our method avoids Siegel's Lemma and  the Generalized Lehmer Conjecture. We use instead Dirichlet's Theorem and an Effective Bogomolov Conjecture.  Here we extend our method to sub-varieties of  abelian vaireties in general.

Assume that $V$ is not a union of translates of abelian subvarieties. The   Bogomolov conjecture, 
nowadays a theorem of Zhang  \cite{zang}, claims
$$
\mu(V)=\inf\{\varepsilon>0,\quad \overline{V_\varepsilon}=V\}>0\;,
$$
Since $S_{d+1}(V_\cbh,\mathcal{O}_\varepsilon) \supset S_{g}(V_\cbh,\mathcal{O}_\varepsilon)=V\cap \mathcal{O}_\varepsilon$, Conjecture \ref{nd} i. implies an effective Bogomolov Conjecture, for weak-transverse subvarieties of $A$. Similarly,  Conjecture \ref{nd} ii. implies the Mordell-Lang plus Bogomolov Theorem \cite{poonen}. With effective, we mean that an explicit value for $\varepsilon$ can be given.

We are going  to prove a  strong reverse implication; an Effective  Bogomolov Conjecture for transverse varieties implies Conjecture \ref{nd}. We need a lower bound for $\mu(V)$,  for $V$ transverse, which is functorial with 
respect to the choice of the polarization on $A$. We state a weak form  of \cite{spec} Conjecture 1.5 part ii. for varieties in abelian varieties.

 \begin{con}[Functorial Bogomolov type bound]
\label{bofu1}
Let $(A,\mathcal{L})$ be a polarized  abelian variety of dimension ${{g}}$ defined over a number field $\mathbb{K}$ of degree $[\mathbb{K}:\qe]$.  Let $X$ be a transverse subvariety of $A$ with finite stabilizer. 
Let $\psi:A \to A$ be an isogeny. Then
$$\mu_{\psi^*\mathcal{L}}(X)> c({{g}},[\mathbb{K}:\qe], h_{\mathcal{L}}(A)) \min_{\eta'=\pm \eta}\left(\frac{\deg_{\psi^*\mathcal{L}}A}{\deg_{\psi^*\mathcal{L}}X}\right)^{\frac{1}{2\cod X}+\eta'}$$
where $ c({{g}},[\mathbb{K}:\qe], h_{\mathcal{L}}(A))$ is a constant depending only on ${{g}}$,  $[\mathbb{K}:\qe]$ and $h_\elle(A)$.
\end{con}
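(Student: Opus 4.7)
The plan is to reduce the functorial statement to a single effective Bogomolov bound, applied after replacing $\mathcal{L}$ by the pullback polarization. First I would observe that $\psi^{*}\elle$ is itself an ample line bundle on $A$, and that the hypotheses on $X$ are preserved: transversality and the finiteness of the stabilizer depend only on the abelian variety structure of $A$, not on the polarization, so they hold with respect to $(A,\psi^{*}\elle)$ as well. The Faltings/$\elle$-height transforms under isogeny by an explicit formula (a contribution of $\frac{1}{2}\log\deg\psi$ plus a correction bounded in terms of $g$), so $h_{\psi^{*}\elle}(A)$ can be controlled by $h_{\elle}(A)$ and $\deg\psi$. Hence the statement reduces to proving a suitable effective Bogomolov lower bound for $X$ as a transverse subvariety of finite stabilizer in the polarized abelian variety $(A,\psi^{*}\elle)$, with an estimate whose constants depend only on $g$, $[\campo:\qe]$ and the height $h_{\psi^{*}\elle}(A)$.

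For the effective Bogomolov step I would follow the arithmetic Hilbert--Samuel / Philippon approach used by David--Philippon and Galateau. Start from $X$ of codimension $\cod X$ with trivial stabilizer; for a large integer $N$, pass to the components of $[N]^{-1}(X)$, which are geometrically isolated thanks to the stabilizer hypothesis. One then constructs an auxiliary section of a high tensor power of $\psi^{*}\elle$ vanishing to prescribed order along a chosen component, and translates the arithmetic intersection inequalities into a lower bound for the essential minimum via Zhang's inequality. A careful balancing of the two auxiliary parameters (the power of $\elle$ and the integer $N$) produces an exponent of the form $\frac{1}{2\cod X}+\eta'$, with $\eta'$ absorbing the error terms arising from arithmetic Bezout. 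The shape $(\deg_{\psi^{*}\elle} A/\deg_{\psi^{*}\elle} X)^{1/(2\cod X)+\eta'}$ on the right-hand side is exactly what this balancing produces, provided every degree and self-intersection is computed consistently against $\psi^{*}\elle$.

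The main obstacle, and the reason this remains only a conjecture, is twofold. First, the exponent $\frac{1}{2\cod X}$ is sharp and at the boundary of what current arithmetic Bezout technology can deliver; known unconditional bounds (e.g.\ Galateau's) give a weaker exponent, and closing the gap to $\frac{1}{2\cod X}$ is analogous in difficulty to the Generalized Lehmer Conjecture, even though here we work with varieties rather than points. Second, the functoriality requires that the leading constant $c(g,[\campo:\qe],h_{\elle}(A))$ be genuinely independent of $\psi$; in practice, running the arithmetic Bezout argument on the polarization $\psi^{*}\elle$ produces implicit factors of $\deg\psi$ coming from the change-of-metric terms and from the dependence on $h_{\psi^{*}\elle}(A)$. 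Keeping these factors entirely inside the normalized ratio $\deg_{\psi^{*}\elle}A/\deg_{\psi^{*}\elle}X$ — so that they contribute to the exponent $\eta'$ rather than to the constant — is the essential arithmetic point that the argument must secure, and it is the step I would expect to occupy most of the technical work.
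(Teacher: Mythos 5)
This statement is Conjecture~\ref{bofu1}, which the paper explicitly \emph{does not prove}: it is presented as a weak form of David--Philippon's Conjecture~1.5(ii) in \cite{spec}, assumed throughout, and Theorem~\ref{main} is conditional on it. There is therefore no proof in the paper against which to compare your attempt, and your write-up correctly recognizes this, stopping at an outline of the known strategy rather than claiming a demonstration.

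Your sketch is consonant with what the paper itself says about the state of the art. The arithmetic Hilbert--Samuel / Philippon approach you describe is indeed the framework of David--Philippon (\cite{sipa}, \cite{spec}) that the conjecture is modeled on, and the paper notes that the only unconditional effective bound currently available in general abelian varieties, \cite{sipa} Theorem~1.4, ``is not sharp enough to deduce non-density statements'' via the method of this article --- which matches your observation that the exponent $\tfrac{1}{2\cod X}$ is at the boundary of what arithmetic Bezout delivers. The two obstacles you name (sharpness of the exponent, and keeping the leading constant free of $\psi$ so the $\deg\psi$-dependence is absorbed into the normalized degree ratio) are exactly the points that make this a conjecture rather than a theorem; the second in particular is what the word ``functorial'' in the title of the statement is encoding. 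One minor nuance worth adding: the paper is careful to assume the conjecture only for transverse $X$ with \emph{finite stabilizer}, precisely because the reductions in Lemma~\ref{stabi} let one pass to that case, so any attempted proof need only handle that setting; your appeal to Galateau-style isolation of components of $[N]^{-1}(X)$ indeed relies on that stabilizer hypothesis, so your outline is internally consistent with the paper's formulation.
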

This lower bound is expected to hold for all polarizations and  for varieties which are not a union of translated of algebraic subgroups. We prefer to assume as little as possible. We only need the bound for transverse varieties. Furthermore, the assumption on the  stabilizer could be deleted all 
along the article, if we suppose that Conjecture \ref{bofu1} holds also for varieties with stabilizer of positive dimension.

In section \ref{dimmain}, we prove:
\begin{thm}
\label{main}
If Conjecture \ref{bofu1} holds for $V$ then Conjecture \ref{nd} holds for $V$.

\end{thm}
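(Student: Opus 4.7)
The strategy is to reduce the problem to the weak-transverse case via Theorem \ref{equii}, project away by an auxiliary abelian subvariety $B$ of codimension at least $d+1$, and then play the upper bound on $\mu$ coming from the Zariski-density hypothesis against the lower bound supplied by Conjecture \ref{bofu1}. Concretely, I fix $K_0 \geq 0$ and, using Theorem \ref{equii}, reduce to showing the existence of an effective $\varepsilon > 0$ such that $S_{d+1}(V_{K_0},\mathcal{O}_\varepsilon)$ is not Zariski-dense in $V$, assuming $V$ weak-transverse. Arguing by contradiction, I assume this set is dense for every $\varepsilon$. Since the abelian subvarieties of $A$ form a countable family, a standard pigeonhole argument yields a single $B \subset A$ of codimension at least $d+1$ for which
$$T := V \cap (B+\mathcal{O}_\varepsilon) \cap \mathcal{O}_{K_0}$$
is Zariski-dense in $V$.

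Next I would choose an isogeny decomposition $A \sim B \times A'$ and let $\pi : A \to A'$ be the projection. Since $V$ is weak-transverse and $\dim A' \geq d+1 > d$, the image $V' := \overline{\pi(V)}$ is a proper irreducible subvariety of $A'$ of dimension $d$ (a positive-dimensional generic fibre would force $V$ to contain a coset of $B$, violating weak-transversality modulo $\stab V$). Translating by a torsion point and quotienting by $\stab V'$, I may further assume $V'$ transverse with finite stabilizer in some quotient abelian variety $A''$. Every $p \in T$ satisfies $\pi(p) = \pi(\xi)$ with $\|\xi\|\le\varepsilon$, hence $\|\pi(p)\|\le c_1 \varepsilon$ for a constant $c_1$ depending only on $\pi$. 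The set $\pi(T)$ is therefore a Zariski-dense subset of $V'$ with height bounded by $c_1 \varepsilon$, so
$$\mu_\elle(V') \leq c_1 \varepsilon$$
for the polarization $\elle$ on $A''$ induced by $\|\cdot\|$.

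Conjecture \ref{bofu1} applied to $V' \subset A''$, after picking an isogeny $\psi$ so that $\psi^*\elle$ realizes a normalization of the relevant degrees, produces the lower bound
$$\mu_{\psi^*\elle}(V') \geq c_2(g,[\campo:\qe], h_\elle(A))\left(\frac{\deg_{\psi^*\elle}A''}{\deg_{\psi^*\elle}V'}\right)^{\frac{1}{2\cod V'}+\eta'}.$$
Reconciling the two inequalities via functoriality of $\mu$ under the isogeny yields an explicit positive lower bound $\varepsilon_0$ depending only on $V$, on $A$, on $K_0$ and on numerical invariants of $B$. Choosing $\varepsilon<\varepsilon_0$ contradicts the density assumption and finishes the proof.

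The main obstacle is uniformity: the auxiliary subvariety $B$ varies through an infinite family of abelian subvarieties of codimension at least $d+1$, and both $\deg V'$ and $\cod V'$ depend on the choice of $B$, so a priori the lower bound $\varepsilon_0$ might degenerate as $B$ varies. This is precisely the reason why the \emph{functorial} form of Conjecture \ref{bofu1} is required rather than a bound for a single fixed polarization: the freedom to choose $\psi^*\elle$ lets one normalize $\deg_{\psi^*\elle} V'$ uniformly in $B$, reducing the analysis to a controlled range of parameters. Secondary bookkeeping---comparing semi-norms through the isogeny, reducing to the case of finite stabilizer, and propagating the effectivity of Theorem \ref{equii} through to the final estimate---is technical but routine once the uniformity issue is settled.
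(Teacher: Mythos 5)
The key flaw is the ``pigeonhole'' step. You assume $S_{d+1}(V_{K_0},\mathcal{O}_\varepsilon)=V_{K_0}\cap\bigcup_B(B+\mathcal{O}_\varepsilon)$ is Zariski-dense and conclude that some single $B$ of codimension $\ge d+1$ already gives a Zariski-dense piece $T=V\cap(B+\mathcal{O}_\varepsilon)\cap\mathcal{O}_{K_0}$. This does not follow: there are countably infinitely many subgroups $B$, and a countable union of non-dense subsets of $V$ can perfectly well be Zariski-dense (each $T_B$ could be finite, for instance). Countability alone gives no pigeonhole. Isolating a single $B$ is precisely what cannot be done, and it is the central difficulty the paper addresses. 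With your step removed, the rest of the argument (projecting by $B$, bounding $\mu$ of the image, invoking Conjecture~\ref{bofu1}) never gets off the ground.

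The paper's substitute for your pigeonhole is a Diophantine-approximation step (Section~\ref{subunion}): using Dirichlet's Theorem one shows that every special morphism $\tilde\phi$ can be replaced by a special $\tilde\psi$ of norm $\ll M$ (for an $M$ depending only on $K_0,\varepsilon$, the endomorphism ring, and numerical invariants) so that $(V_{K_0}\times p)\cap(B_{\tilde\phi}+\mathcal{O}_{\varepsilon/M})\subset (V_{K_0}\times p)\cap(B_{\tilde\psi}+\mathcal{O}_{\varepsilon'/|\tilde\psi|})$. There are only finitely many morphisms of bounded norm, so the infinite union is replaced by a genuinely finite one (Theorem~\ref{centro}), and only \emph{then} does the Bogomolov-type bound (Theorem~\ref{finito} and Corollary~\ref{fine}) take over to show each of the finitely many remaining sets is non-dense. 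Your remark that the functorial form of Conjecture~\ref{bofu1} absorbs the uniformity issue is also misplaced: in the paper, the two exponents in Proposition~\ref{EM} (one decreasing in $|\phi|$, one increasing) are used to split the analysis at a threshold $m$; the functorial form is what makes the increasing bound in part~ii. possible and is not a substitute for the finiteness reduction. Also, as a minor point, your reduction goes through Conjecture~\ref{nd}~i.~(weak-transverse); the paper's reduction via Theorem~\ref{equii} goes the other way, to the transverse case~ii., which is what the subsequent machinery (weighted/special morphisms, $\Gamma$, the point $\gamma$) is built for.
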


Even if the theorem is conjectural, it is nice to see that the codimension of the algebraic subgroups is the optimal $d+1$. No other known methods, even conjectural (for example assuming the generalized Lehmer's Conjecture) give such an optimal result, for $\varepsilon>0$.
It is also worth to note that we do not need to assume Conjecture \ref{bofu1} for all varieties, but only for the variety $V$ in question. This is an advantage with respect to other methods.

The strategy to prove  Theorem \ref{main}  is based on two steps.
A union of infinitely many sets is non Zariski-dense if and only if:
\begin{itemize}
 \item[(1)] the  union can be taken over    finitely
many sets, and
\item[(2)] all sets in the union are
non Zariski-dense.
\end{itemize}

The proof of (1)  is a 
typical problem of Diophantine approximation. We approximate an
algebraic subgroup with a subgroup of degree bounded by a constant.  This part is an extension of the method introduced in \cite{io} where the ambient variety is a power of an elliptic curve $E^g$.   The ring of endomorphisms of an abelian variety can be quite more complicated than the one of $E$. This produces some problems to overcome. 

The second step (2) is a problem of   height theory. Its proof relies on  Conjecture \ref{bofu1}. This approach differs from the one presented in \cite{io} and \cite{irmn}.  There we use a different kind of bound for the normalized height.


The only known effective bound for the essential minimum in an abelian variety in general is 
given by S. David and P. Philippon \cite{sipa} Theorem 1.4.
This bound is not sharp enough to deduce non-density statements, using the method presented in this article.

 \newpage

\section{preliminaries}

\subsection{Notations}

\begin{itemize}
\item
All varieties are defined over $\overline{\qe}$.

\item For $i=0,1,\dots ,n$, the $A_i$ are non isogenous  simple abelian varieties of dimension $d_i$.

\item $\rend_i$ the ring of endomorfisms of $A_i$ and $t_i$ its rank over $\ze$.

\item $\tau^i=(\tau^i_1,\dots, \tau^i_{t_i})$  a set of integral generators  of $\rend_i$ as $\ze$-module.

\item $\underline{1}=(1,\dots,1)$, $\g=(g_1,\dots ,g_n)$, $\erre=(r_1,\dots ,r_n)$ and $\esse=(s_1,\dots ,s_n)$  multi-indices of natural numbers with $g=\sum_ig_i$, $r=\sum_ir_i$ and $s=\sum_is_i$.

\item$A^\g=A_1^{g_1}\times \dots \times A^{g_n}$ the ambient variety of dimension $\sum_id_ig_i$.

\item $\rend=\rend_1\times \dots \times \rend_n$ the ring of endomorfisms of $A^{\underline{1}}$  and $t$ its rank over $\ze$.
\item $\tau=(\tau^1,\dots, \tau^n)$  a set of integral generators  of $\rend_i$ as $\ze$-module.

\item$V$ a proper irreducible algebraic subvariety of $A^\g$ of dimension $d$.

\item $\Gamma$ a submodule of $A^\g $.

\item   $\gamma$ a set of free generators of $\Gamma^{\underline{1}}$ (see relation (\ref{basegamma}) )  satisfying relation (\ref{gammagrande}).

\item$\varepsilon\ge0$ a non negative real (small and usually positive).

\item $\mathcal{O}_\varepsilon$ the set of points of norm at most $\varepsilon$.

\item$K_0\ge0$ a  real (eventually the norm of $S_{2d+1}(V^0, \Gamma_\varepsilon)$ or $S_{2d+1}(V^0\times p, \mathcal{O}_\varepsilon)$ if bounded).

\item $V_{K_0}=V \cap \mathcal{O}_{K_0}$.

\item$B$ a proper algebraic subgroup of $A^\g$.

\item$\phi$ a (weighted) surjective morphism from $A^\g$ to $A^\erre$.

\item $i_\erre:A^\erre \to A^\g$ an immersion, if it exists, such that $\phi\cdot  i_\erre=[a]$ for $a \in \mathbb{N}^*$.

\item  $\tilde\phi=(\phi|\phi')$ a (special) surjective morphism from $A^{\g+\esse}$  to $A^\erre$ with $\phi:A^\g \to A^\erre$ and $\phi':A^\esse \to A^\erre$.

\item  $B_\phi=\ker \phi$.

\item $\phi_B$ a weighted morphism of minimal dimension such that $B\subset \ker \phi_B$.

\item  $\erre$ rank of $B$ or $\phi_B$.

\item  $k$ the codimension of $B$ or $\phi_B$, note that $k=\sum_id_ir_i$.

\item  $| \phi|$ the maximum of the (Rosati)-norm of the entries of $\phi$.



\item  $\xi,\upxi$ points of small height.

\item  $p$ a point in $A^\esse$ of rank $\esse$.

\item We denote by $\ll$ an inequality up to a multiplicative constant depending on irrelevant parameters of the problem.
\end{itemize}

\subsection{The ambient variety}
\label{nota}

In the  first instance we analyse  the ambient variety.  
Statements on boundness of heights and on non-density of sets  are  invariant under an
isogeny of the ambient variety. Namely, given an isogeny $J:A \to A'$ between abelian varieties over $\overline{\mathbb{Q}}$, Conjecture \ref{nd} (\ref{gen} and \ref{hb}) holds for $V\subset A$ if and only if it  holds for $J(\varieta)$ in $A'$. 
We want to fix a convenient isogeny which simplifies the setting.

In view of the decomposition theorem, an abelian variety  is  isogenous  to a product 
$A_1^{g_1} \times \dots \times A_n^{g_n}$ where the $A_i$ are non isogenous simple abelian varieties of dimension $d_i$.

Let $\underline{g}$ be  the multi-index $(g_1,\dots ,g_n)$ and $g=\sum_i g_i$. 
We denote by $$A^{\g}=A_1^{g_1} \times \dots \times A_n^{g_n}.$$
Note that the dimension of $A^\g$ is $\sum _id_ig_i$.

A point $x\in A^{\g}$ has coordinates $(x_1,\dots, x_{g})$. 
We will also denote 
$$x=(x^1_1,\dots, x^1_{g_1},\dots ,x^n_1,\dots,x^n_{g_n})=(x^1, \dots ,x^n)$$
meaning that $x^i_{k}=x_{k+\sum_{j=1}^{i-1}s_j}$ and $x^i\in A_i^{g_i}$.

In the following we will use other multi-indeces:
 $\underline{r}=(r_1,\dots ,r_n)$ with $r=\sum_ir_i$, $\underline{s}=(s_1,\dots ,s_n)$ with $s=\sum_is_i$. Always we will have $$r_i\le g_i.$$

We denote   by $A^{\erre}=A_1^{r_1}\times \dots \times A_n^{r_n}$ and $A^{\esse}=A_1^{s_1} \times \dots \times A_n^{s_n}$.

\subsection{Subgroups}
Let $M$ be a  $R$-module of rank $s$. 
We define a set of free generators of $M$ as  a set of $s$ 
linearly independent  elements of $M $. 
If $M$  is a free  $R$ module of  rank  $s$  
we call a set of $s$ generators of $M$, integral generators of $M$.

Here we will simply say module for a module over the ring of endomorphism of an abelian variety.

Let $\Gamma$ be a subgroup of $A^\g(\overline\qe)$ of finite rank.

Note that any subgroup of finite rank of $A^\g$ is contained in a $\mathcal{E}$-module of finite rank. In turn a $\rend$-module   of finite rank  in $A^\g$  is a subgroup of finite rank .

The $i$-th saturated module  ${\Gamma}_i\subset A_i$ of $\Gamma$ is
$${\Gamma}_i=\{\phi(y)  \in A_i\,\,\,:\,\,\, \phi \in \Hom(A^\g ,A_i){\rm \,\,\,and \,\,\,}Ny\in \Gamma{\rm \,\,\,with\,\,\,}  N\in {\mathbb{N}}^*\}.$$
By $\Gamma^{\erre}$ we denote $\Gamma_1^{r_1}\times \dots \times \Gamma_n^{r_n}$.
Note that, $\Gamma^{\g}\,$ is invariant via the image
or preimage of isogenies of $A^{\g}$. Furthermore it contains $\Gamma$ and it is of finite rank. This shows that  to prove finiteness statements for $\Gamma$, it is enough to prove them for $\Gamma^{\g}$. 

We denote by $s_i$ the rank of $\Gamma_i$. Let $\gamma^i_{1}, \dots ,\gamma^i_{s_i}$ be a set of free generators of $\Gamma_i$.

We denote
\begin{equation}
\label{basegamma}
\begin{split}
\gamma^i&=(\gamma^i_1,\dots, \gamma^i_{s_i}),\\
\gamma&=(\gamma^1, \dots, \gamma^n).
\end{split}
\end{equation}
Then $\gamma$ is a set of free generators of $\Gamma^{\underline{1}}$. 
We will also denote 
$$\gamma=(\gamma_1, \dots ,\gamma_s)$$
meaning that $\gamma^i_{k}=\gamma_{k+\sum_{j=1}^{i-1}s_j}$.
Note that  $\gamma^i \in A^{s_i}_i$ and $\gamma \in A^{\esse}$.

\begin{D}Let $p\in A^{\g}$ be a point. We denote by $$\Gamma_p=\{\phi(p)\,\,\,:\,\,\,  \phi\in \emor (A^\g)\}$$
(where $\phi$ does not need to be surjective). We can then consider the
associated saturated module $\Gamma_p^{\erre}$,  defined as above.

We say that $p$ has rank $\esse$ if $ \Gamma_{p,i}$ has rank $s_i$.
\end{D}

\subsection{ The norm of a Morphism}
The ring of endomorphisms of $A^{\g}$ is far more complicated than the one
of an elliptic curve. However, it is a free $\ze$-module of finite rank.
We denote by $\rend_i$ the ring of endomorphism of $A_i$. This is a free  $\ze$-module of rank $t_i$. We denote by $\tau^i$ a set of $t_i$ integral generators of $\rend_i$. Then,  a
morphism $\phi_i:A_i^{g_i}\to A_i^{r_i}$ is identified with a $r_i\times
g_i$ matrix with entries in $\rend_i$.

The Rosati involution defines a norm $|\cdot|$ on $\rend_i$.   The $\ze$-module $(\rend_i,|\cdot|)$ is a lattice.

For $\phi_i:A^{g_i}_i\to A^{r_i}_i$, we define $| \phi_i|$ as the
maximum of the (Rosati-) norm of its entries.

Note  that we can identify $\rend_i$ either with an
order in a number field or with a
quaternion ring. In an order, the Rosati-norm is identified  with  the
standard Euclidean norm  in $\mathbb{C}$. A
quaternion ring can be identified with a ring of matrix
with entries in an order. Then,  the Rosati-norm of $a$  will be the trace of $a \bar{a}$.

Since the simple factors of $A^\g$ are not isogenous, a morphism $\phi:A^{\g} \to A^{\erre}$ is identified with a block
matrix

\begin{equation*}
\phi=[\phi_1, \dots , \phi_n]:=
\left(
\begin{array}{ccccc}
\phi_1 & 0 & \dots &\empty &0\\
\empty &\ddots& \empty &\empty &\empty \\
\empty   &\empty &\ddots&\empty &\empty \\
0& \dots&0 &\empty &\phi_n
\end{array}
\right)
\end{equation*}
with $\phi_i:A^{g_i}\to A_i^{r_i}$. 
Note that $\ker \phi= \ker \phi_1 \times \dots \times \ker \phi_n$. Furthermore  $| \phi|=\max_i| \phi_i|$. There are only finitely many morphism of norm smaller than a given constant.

We finally remark that in our previous articles \cite{io} and \cite{irmn} we denote $H(\phi)$ what we denote here $|\phi|$.

\subsection{Weighted and Special morphisms}
As in the elliptic case, there are matrices which have certain advantages. We generalize the definitions given in \cite{io} for a power of an elliptic curve.
The following definitions are  less restrictive, in the sense that we allow common factors of the entries and we work up to an absolute constant depending on the endomorphism ring of $A^\g$.
Up to reordering of columns which does not mix the blocks, a weighted matrix looks like
\begin{equation*}
\phi=\left(
\begin{array}{cccccccccc}
a&\dots& 0 &L^1_1 & 0 & \dots& \empty &\dots&0 \\
\empty &\ddots &\empty &\empty &\empty&\empty &\empty &\empty &\empty\\
0&\dots& a&L^1_{r_1} & 0 & \dots& \empty &\dots&0\\
\empty &\empty &\empty &\empty &\ddots &\empty &\empty &\empty &\empty\\
\empty &\empty &\empty &\empty  &\empty &\ddots&\empty &\empty &\empty\\
0 &\empty& \dots& \empty  &0&a&\dots& 0 &L^n_1 \\
\empty &\empty &\empty &\empty  &\empty &\empty &\ddots &\empty \\
0 &\empty &\dots&\empty  &0 &0&\dots& a&L^n_{r_n}  \\
\end{array}
\right)
\end{equation*}

where $L^i_j:A^{g_i-r_i}_i \to A_i$  for $i=1,\dots,n$, and $| \phi|\ll |a| $. If $g_i=r_i$, we simply forget $L^i_j$.

\begin{D} [Weighted Morphisms]

We say that a surjective morphism  $\phi=[\phi^i,\dots,\phi^n]:A^{\g} \to A^{\erre}$ is weighted  if:
\begin{enumerate}
\item  there exists $a\in \mathbb{N}^*$ such that $aI_{r}$ is a submatrix of $\phi$,
\item $| \phi|\ll a$.
\end{enumerate}

We associate to a weighted morphism $\phi$ an embedding $i_\erre:A^\erre \to A^\g$ such that $\phi \cdot i_\erre=[a]$.

\end{D}

\begin{D} [Special Morphisms]

We say that $\tilde\phi=( \phi|\phi'):A^{\g}\times A^{\esse} \to A^{\erre}$ is special if:

\begin{enumerate}
\item  $\phi$ is weighted,
\item $| \tilde\phi|\ll | \phi|$.
\end{enumerate}

\end{D}

(The  multiplicative constants in the previous two definitions depend only on $\rend$ and not on the morphism.)

\subsection{Algebraic Subgroups}
By the decomposition theorem for abelian varieties, we know that an abelian subvariety of $A^\g$ is isogenous to a product $A^\esse$ for some $s_i \le g_i$.
 Masser and W\"ustholz \cite{mw} Lemma 1.2, prove that the
algebraic subgroups of $A^{\g}$ split as product of algebraic subgroups of $A^{g_i}_i$. In fact non-split algebraic subgroups  would define an isogeny between the non isogenous simple factors. Then,

\begin{lem} 
\label{masserw}

 An algebraic subgroup $B$ of $A^{\g}$ is of the form $B_1 \times \dots \times B_n$ for $B_i$ an algebraic subgroup of $A_i^{g_i}$. Furthermore, the codimension of $B_i$ is $d_ir_i$ for integers $0\le r_i\le g_i$. {\rm(Recall that $d_i$ is the dimension of $A_i$)}.

\end{lem}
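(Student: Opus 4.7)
The plan is to reduce the claim to a statement about abelian subvarieties, and then exploit that the simple factors $A_1, \dots, A_n$ are pairwise non-isogenous, so that by Schur's lemma one has $\Hom(A_i, A_j) = 0$ whenever $i \neq j$. Combined with Poincar\'e's complete reducibility theorem, this vanishing forces any abelian subvariety of $A^{\g}$ to split along the non-isogenous blocks, and the codimension is then read off from the simplicity of each $A_i$.

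Let $B^0$ be the identity component of $B$, an abelian subvariety of $A^{\g}$, and fix an index $j$. Write $p_j : A^{\g} \to A_j^{g_j}$ for the projection onto the $j$-th block and $q_j : A^{\g} \to \prod_{i \neq j} A_i^{g_i}$ for its complement. Applying Poincar\'e's reducibility to the inclusion $B^0 \cap A_j^{g_j} \subseteq B^0$ yields a complementary abelian subvariety $C'_j \subseteq B^0$ for which $q_j$ restricts to an isogeny $C'_j \to q_j(B^0)$. Every simple factor of $q_j(B^0)$, being a simple quotient of $\prod_{i \neq j} A_i^{g_i}$, is isogenous to some $A_i$ with $i \neq j$; hence the same holds for $C'_j$. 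Consequently $\Hom(C'_j, A_j^{g_j}) = 0$, which forces the composition $C'_j \hookrightarrow A^{\g} \to A_j^{g_j}$ to vanish, so $C'_j \subseteq \ker p_j = \prod_{i \neq j} A_i^{g_i}$. Since $A_j^{g_j} \cap \prod_{i \neq j} A_i^{g_i} = \{0\}$ inside $A^{\g}$, the isogeny decomposition supplied by Poincar\'e becomes an honest direct product $B^0 = (B^0 \cap A_j^{g_j}) \times C'_j$; induction on $n$ then produces $B^0 = B^0_1 \times \dots \times B^0_n$ with $B^0_i = B^0 \cap A_i^{g_i}$, and the passage from $B^0$ to the finitely many cosets of $B$ is absorbed into the components $B_i$.

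For the codimension assertion, each $B^0_i$ is an abelian subvariety of $A_i^{g_i}$ whose simple factors are necessarily isogenous to the simple $A_i$, so Poincar\'e applied inside $A_i^{g_i}$ yields $B^0_i$ isogenous to $A_i^{g_i - r_i}$ for some $0 \le r_i \le g_i$; hence the codimension of $B^0_i$ in $A_i^{g_i}$ is $d_i r_i$. The main obstacle throughout is the conversion of the a priori \emph{isogeny} decomposition produced by Poincar\'e's theorem into a genuine equality of subvarieties, and this is precisely where the pairwise non-isogeny of the $A_i$, through the vanishing of $\Hom$ between distinct simple factors, is used.
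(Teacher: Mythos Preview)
The paper does not supply a self-contained proof here; it attributes the result to Masser and W\"ustholz \cite{mw}, Lemma~1.2, and offers only the one-line justification that ``non-split algebraic subgroups would define an isogeny between the non-isogenous simple factors.'' Your argument is a careful unfolding of precisely this idea: Poincar\'e reducibility supplies a candidate complement $C'_j$, and the vanishing $\Hom(A_i,A_j)=0$ for $i\neq j$ forces $C'_j$ to lie in the complementary block, so that the a~priori isogeny decomposition becomes an honest product. For the identity component $B^{0}$ your approach and the paper's sketch therefore coincide, and your treatment of the codimension via the isogeny $B^{0}_i\sim A_i^{g_i-r_i}$ is correct.

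There is, however, a genuine gap in the final clause. The assertion that ``the passage from $B^{0}$ to the finitely many cosets of $B$ is absorbed into the components $B_{i}$'' is false in general: take $n=2$, $g_1=g_2=1$, nontrivial torsion points $\zeta_i\in A_i$, and $B=\{0,(\zeta_1,\zeta_2)\}\subset A_1\times A_2$; then $B^{0}=0$, yet $B$ is visibly not a product $B_1\times B_2$ of subgroups of the factors. Thus the lemma as literally stated fails for disconnected $B$, and your argument (correctly) establishes only the connected case. This is harmless for the paper---the sets $S_k$ are defined with $B$ ranging over abelian subvarieties, and the subsequent use is only that $B$ is \emph{contained} in the kernel of some $\phi_B$---but you should say explicitly that the product decomposition is proved for $B^{0}$, and that an arbitrary algebraic subgroup $B$ is merely contained in the product of its projections.
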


\begin{D}
Let $B=B_1 \times \dots \times B_n$ be an algebraic  subgroup of $A^\g$. Let $k_i$ be the codimension of $B_i$ in $A_i^{g_i}$. The rank of $B_i$ is $r_i=k_i/d_i$ and the rank of $B$ is $\erre=(r_1,\dots,r_n)$. 

Let $\phi:A^\g \to A^\erre$ be a surjective morphism. The codimension of $\phi$ is $\sum d_ir_i$, in other words it is the codimension of $\ker \phi$.
\end{D}

Lemma \ref{masserw} implies that, as in the case of $E^g$, an algebraic
subgroup $B$ of rank $\erre$ is contained in the kernel of a surjective morphism $\phi_B:A^{\g} \to A^{\erre}$
and the kernel $B_\phi$ of a surjective morphism  $\phi_B:A^{\g} \to A^{\erre}$ is an algebraic subgroup of rank $\erre$. 

Furthermore, the codimension of $B_\phi$ is  given by
$$\cod \,\,B_\phi=\sum_i d_ir_i.$$ 
Also note that $r=\sum_ir_i$ is the rank of $\phi$ as matrix, and  $r_i$ is the rank of $\phi_i$, for $\phi=[\phi_1, \dots ,\phi_n]$.
In a product of elliptic curves, the rank and the codimension of an algebraic subgroup coincide.

\subsection{Relations between weak-transverse and transverse varieties}
We discuss here, how  we   can associate to the couple $(V,\Gamma)$ a weak-transverse variety $\varieta'$, and vice versa.

Let $\varieta$ be   transverse  in $A^{\g}$. Let $\Gamma$ be a subgroup of
finite rank of $A^{\g} $ and $\gamma$ a set of free generators of $\Gamma^{\underline{1}}$. 
 We define
 $$\varieta'=\varieta\times \gamma.$$
  Note that $\varieta'$ is not contained in any proper algebraic subgroup, because the $\gamma_i$ are $\rend_i$-linearly independent and $\varieta$ is transverse.
So $\varieta'$  is weak-transverse  in $A^{\g+\esse}$.

Let $\varieta'$ be weak-transverse in $A^{\underline{n}}$. Let $H_0$ be the abelian
subvariety  of smallest dimension  such that $\varieta'\subset H_0+p$ for
$p\in H_0^\perp$ and $H_0^\perp$ an orthogonal complement of $H_0$. Then $H_0$ is isogneous to $A^\g$ for a multi-index $\g$ and $H_0^\perp$ is isogenous to $A^\esse$ for $\esse={\underline{n}}-\g$.
 We fix an isogeny $$J: A^{\underline{n}} \to H_0\times H_0^{\perp} \to A^\g \times A^\esse,$$ which sends $H_0$ to $A^\g$ and $H_0^\perp $ to $A^\esse$. Then $J(p)\in 0\times A^\esse$. Since $V'$ is weak transverse the projection of $J(p)$ on $A^\esse$ has    rank $\esse$. 

We consider the natural projection
\begin{equation*}
\begin{split}
 \pi:&A^{\g+\esse} \to A^{\g}\\
 &J(\varieta' )\to \pi J(\varieta').
 \end{split}
 \end{equation*}
 
We define
$$\varieta=\pi J(\varieta'),$$
and $$\Gamma=\Gamma_{J(p)}^{\underline{1}}.$$

 Since $H_0$ has minimal dimension, the variety $\varieta$ is transverse in $A^{\g}$.
 
 Note that $$\varieta'=(\varieta\times 0)+J(p).$$

Statements on the boundness of height and on the Zariski non-density of sets are invariant under an isogeny. Then, without loss of generality, we can assume that 
a weak-transverse variety  in $A^{\underline{n}}$ is of the form

$$V\times p $$ with    
\begin{itemize}
\item[-] $V$ a  transverse subvariety of $ A^\g$,
\item[-]  $p$ a point in $ A^\esse$ of rank $\esse$,
\item[-] ${\underline{n}}=\g+\esse$.
\end{itemize}

\subsection{Points of small height}

On each $A_i$, we fix a symmetric ample line bundle $\mathcal{L}_i$. By $\mathcal{L}$ we denote the polarization on a product variety
$A^{\g}$ given as  the tensor product of the
pull-backs of $\mathcal{L}_i$ via the natural projections on the
factors.
 On $A^{\g}$, we consider the height of the maximum defined as 
\begin{equation*}
h(x^1,\dots ,x^n)=\max_{ij}(h_i(x^i_j)),
\end{equation*}
where $h_i(\cdot)$  on $A_i $ is the  canonical N\'eron-Tate height  induced by $\mathcal{L}_i$.
The height $h$ is the square of a norm  $||\cdot||$ on $A^{\g}\otimes \mathbb{R}$.  For a point $x \in A^{\g}$, we write $||x||$ for $||x\otimes1||$.

The height  of a non-empty set $S\subset A^\g$ is the supremum of the heights  of
its elements. The norm of $S$ is  the positive square root
of its height.

For $\varepsilon \ge 0$, we denote 
$$\mathcal{O}_{\varepsilon}=\{ \xi
\in A^{\g}  : ||\xi|| \le \varepsilon\}.$$ 

For a real $K_0\ge 0$ and a subvariety $V$ of $A^\g$, we denote by
$$V_{K_0}= V \cap \mathcal{O}_{K_0}.$$ 
Note that $\mathcal{O}_0=A^{\g}_{\rm{Tor}}$ is the torsion of $A^\g$ and $V_0$ are the torsion points on $V$.
 We define
$$\Gamma_{\varepsilon}=\Gamma + \mathcal{O}_\varepsilon.$$

 Finally, we remark that for any $x\in A^\g$ and any morphism $\phi$,  $$||\phi(x)||\ll | \phi|||x||.$$

\section{The approximation of the morphisms}
\label{subunion}

As for curves, we want to approximate a   morphism with a morphism of  norm bounded by a constant.
  We reduce the problem of approximating  a morphism of abelian varieties, to the approximation of a morphism with entries in $\mathbb{Z}$. 
This is done by considering $\rend$ as a free $\mathbb{Z}$-module.

We recall Dirichlet's Theorem on the rational approximation of reals. 

\begin{thm1}[Dirichlet 1842, see  \cite{S} Theorem 1 p. 24]

\label{diry}
Suppose that $\alpha_1, \dots , \alpha_m$ are $n$ real numbers and that $Q\ge 2$ is an integer. Then there exist integers $\Bittaa, \beta_1,\dots,\beta_m$ with
\begin{equation*}
1\le \Bittaa<Q^m \,\,\,\,{\rm{and}}\,\,\,\,\left|\alpha_i\Bittaa-\beta_i\right|\le \frac{1}{Q}
\end{equation*}
for $1\le i \le m$.

\end{thm1}
Recall that $t$ is the rank of $\rend$ as $\ze$-module and $\tau=(\tau_1,\dots,\tau_t)$ is a set of  integral generators of $\rend$.
Let $e_i$ be the canonical set of integral generators of $\ze^t$. The map $e_i \to \tau_i$ gives an isomorphism of $\ze^t$ and $\rend$. Then, the natural norm  induced by  $\ze^t$ on $\rend$  is equivalent  to the Rosati-norm. This shows:
\begin{lem}
\label{zero}
Given a positive integer $n$,
there exist constants $\czero$ and $\cuno$ depending on $\tau$, $t$  and $n$ such that, for all $\overline\acca \in \rend^n$, $\overline\acca=\alpha_0+ \alpha_1\tau_1+\dots+\alpha_t \tau_t$ with $\alpha_i\in\ze^n$ and $\alpha=(\alpha_1,\dots,\alpha_t)\in \ze^{nt}$, it holds
\begin{equation*}
\czero |\alfa|\le |{\overline{\acca}} |\le \cuno |\alfa|,
\end{equation*}

\end{lem}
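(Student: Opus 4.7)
The plan is to recognize the statement as an instance of the equivalence of norms on a finite-dimensional real vector space, with a small amount of bookkeeping to pass from $\rend$ to $\rend^n$. By definition of $\tau=(\tau_1,\dots,\tau_t)$ as a set of integral generators of $\rend$, the map $\ze^t\to \rend$ sending $e_i\mapsto \tau_i$ is an isomorphism of $\ze$-modules. Extending scalars to $\re$ yields an $\re$-linear isomorphism $\re^t\to \rend\otimes_\ze\re$ of finite-dimensional real vector spaces of dimension $t$.

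First I would transport the standard Euclidean norm of $\re^t$ to $\rend\otimes\re$ through this isomorphism, obtaining one norm there. The Rosati involution, as recalled in the paragraph just before the lemma, endows $\rend\otimes\re$ with a second norm $|\cdot|$ (this is in fact how the paper introduces it, the pair $(\rend_i,|\cdot|)$ being a lattice). Since any two norms on a finite-dimensional real vector space are equivalent, there exist constants $c(\tau,t)>0$ and $c'(\tau,t)>0$ such that for every $\acca=\alpha_1\tau_1+\dots+\alpha_t\tau_t\in\rend$ with $\alpha=(\alpha_1,\dots,\alpha_t)\in\ze^t$, one has
\[
c(\tau,t)\,|\alfa|\le |\acca|\le c'(\tau,t)\,|\alfa|.
\]

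To upgrade from $\rend$ to $\rend^n$, I would apply the previous step componentwise. Writing $\overline\acca=(\acca^{(1)},\dots,\acca^{(n)})\in\rend^n$ with $\acca^{(j)}=\sum_i \alpha_{ij}\tau_i$ and $\alpha=(\alpha_{ij})\in\ze^{nt}$, the norm $|\overline\acca|$ is the maximum of the $|\acca^{(j)}|$ (consistent with the paper's convention of taking the maximum of the Rosati norms of the entries), and the norm on $\ze^{nt}$ is likewise built componentwise from the vectors $(\alpha_{1j},\dots,\alpha_{tj})\in\ze^t$. Combining the $n$ pointwise inequalities and absorbing the factor $n$ into the constants gives the desired inequality with constants $\czero$ and $\cuno$ depending only on $\tau$, $t$ and $n$.

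There is essentially no obstacle: the content is pure linear algebra (equivalence of norms in finite dimension), and the only care required is in tracking which data the constants depend on, namely $\tau$, $t$ and $n$, and not on the element $\overline\acca$ itself. The apparent $\alpha_0$ in the statement is harmless, since it would simply contribute a further comparable $\ze^n$-summand which can be absorbed into the same argument (or omitted, as $\tau$ is a $\ze$-basis of $\rend$).
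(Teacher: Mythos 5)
Your proof is correct and follows essentially the same route as the paper: identify $\rend$ with $\ze^t$ via $e_i\mapsto\tau_i$, invoke equivalence of norms on the finite-dimensional real vector space $\rend\otimes\re$, and pass to $\rend^n$ componentwise. The paper states this equivalence in a single sentence preceding the lemma; you have merely spelled out the same argument (and correctly flagged the stray $\alpha_0$ in the statement as harmless).
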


We define $$\lambda_\rend=\min_{a\in \rend^*}|a|$$
and \begin{equation}
\label{qzero}
Q_0=2\max\left(1,\frac{1}{\czero}, \frac{\sum_i|\tau_i|}{\lambda_\rend}\right)
\end{equation}
where $c_0$ is as in  Lemma \ref{zero}.

\begin{lem}
\label{ricopri}
Let   $Q\ge Q_0$ be an integer.
Then, for each non trivial   $\,\,{\overline{\acca}} =(\acca_1,\dots , \acca_n)\in \rend^n$ there exists $\Bittaa\in \mathbb{N}^*$ and ${\overline{\bitta}}=(\bitta_1,\dots , \bitta_n)\in \rend^n$ satisfying
\begin{enumerate}
\item $1\le \Bittaa < Q^{nt}$,
\item $\left|{\overline{\bitta}}\right|\ll \Bittaa \ll\left|{\overline{\bitta}}\right|$,
\item $\left|\frac{{\overline{\acca}} }{|{\overline{\acca}} |}-\frac{{\overline{\bitta}}}{\Bittaa}\right| \ll \frac{1}{Q\Bittaa}$.
\end{enumerate}
\end{lem}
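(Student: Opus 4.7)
The plan is to reduce Lemma~\ref{ricopri} to the classical Dirichlet Theorem~\ref{diry} by exploiting the free $\ze$-module structure of $\rend$ through the generators $\tau$, and then to translate the resulting integer Euclidean inequalities back to the Rosati norm via Lemma~\ref{zero}.

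Concretely, I would decompose $\overline{\acca}=\alpha_1\tau_1+\cdots+\alpha_t\tau_t$ with $\alpha_i\in\ze^n$ and assemble the coefficients into the vector $\alpha=(\alpha_1,\ldots,\alpha_t)\in\ze^{nt}$, which is nonzero by Lemma~\ref{zero} since $\overline{\acca}\ne 0$. Applying Theorem~\ref{diry} with $m=nt$ to the real numbers $\alpha_k/|\overline{\acca}|$, for $k=1,\ldots,nt$, yields an integer $\Bittaa$ with $1\le \Bittaa<Q^{nt}$ and integers $\beta_k$ satisfying $|\Bittaa\,\alpha_k/|\overline{\acca}|-\beta_k|\le 1/Q$. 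Let $\overline{\bitta}\in\rend^n$ be the element whose $\tau$-coordinates are $\beta=(\beta_k)\in\ze^{nt}$. Part~(i) is then immediate. For part~(iii), the Euclidean distance $|\Bittaa\alpha/|\overline{\acca}|-\beta|$ is at most $\sqrt{nt}/Q$, and Lemma~\ref{zero} applied to the corresponding element of $\rend^n$ gives
\[
\left|\Bittaa\,\frac{\overline{\acca}}{|\overline{\acca}|}-\overline{\bitta}\right|\le\cuno\sqrt{nt}/Q,
\]
which, after dividing by $\Bittaa$, is exactly the inequality in~(iii).

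It remains to establish~(ii). The triangle inequality combined with the identity $|\Bittaa\,\overline{\acca}/|\overline{\acca}||=\Bittaa$ yields
\[
\Bittaa-\cuno\sqrt{nt}/Q\le|\overline{\bitta}|\le\Bittaa+\cuno\sqrt{nt}/Q.
\]
Since $Q\ge Q_0\ge 2$ and $\Bittaa\ge 1$, the upper estimate gives $|\overline{\bitta}|\ll\Bittaa$ without difficulty. The reverse inequality $\Bittaa\ll|\overline{\bitta}|$ is the main obstacle, because the error $\cuno\sqrt{nt}/Q$ must be a genuine fraction of $\Bittaa$ even in the critical regime $\Bittaa=1$. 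This is precisely the role of the explicit choice of $Q_0$ in~(\ref{qzero}): the factor $1/\czero$ absorbs the translation constants of Lemma~\ref{zero}, while the term $\sum_i|\tau_i|/\lambda_\rend$ forces $\overline{\bitta}\ne 0$, so that $|\overline{\bitta}|\ge\lambda_\rend$. Combining the two regimes (large $\Bittaa$, where the lower estimate is already fractional, and small $\Bittaa$, where one invokes $|\overline{\bitta}|\ge\lambda_\rend$) yields a constant $c>0$ depending only on $\rend$, $n$ and $\tau$ with $|\overline{\bitta}|\ge c\,\Bittaa$, which completes~(ii).
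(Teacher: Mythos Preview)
Your proof is correct and follows the same route as the paper: expand $\overline{\acca}$ in the $\tau$-basis, apply Dirichlet's Theorem to the $nt$ real numbers $\alpha_k/|\overline{\acca}|$, reassemble $\overline{\bitta}$, and translate the resulting Euclidean estimates back to the Rosati norm. The one place where the paper is sharper is the lower bound $\Bittaa\ll|\overline{\bitta}|$: instead of your global triangle inequality, the paper singles out the index $l$ with $|\overline{\acca}|=|\acca_l|$, applies the approximation estimate to that single coordinate to get $\Bittaa\le \sum_i|\tau_i|/Q+|\bitta_l|$, and then the explicit choice $Q_0\ge 2\sum_i|\tau_i|/\lambda_{\rend}$ forces $\sum_i|\tau_i|/Q<\lambda_{\rend}$, hence $\bitta_l\ne 0$ and $\Bittaa\le 2|\bitta_l|$. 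This is precisely why that constant sits in $Q_0$; your route produces the constant $c_1\sqrt{nt}$ instead, so your appeal to the specific form of $Q_0$ in~(\ref{qzero}) is slightly loose, though of course harmless once one allows the implicit constants in $\ll$ to depend on $n$, $t$, and $\tau$.
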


\begin{proof}
Let ${\overline{\acca}} =\alfa^1\tau_1+ \dots+ \alfa^t \tau_t$ with $\alfa^i\in \ze^n$. Define $\alfa=(\alfa^1,\dots ,\alfa^t)\in \ze^{nt}$.

The vector $\frac{1}{|{\overline{\acca}} |}\alfa $ belongs to $\re^{nt}$. Applying Dirichlet's Theorem \ref{diry} with $m=nt$ and $(\alpha_1, \dots, \alpha_m)=\frac{1}{|{\overline{\acca}} |}\alfa $, we deduce that there exist an integer $\Bittaa$ and integer vectors  $\berta^1, \dots ,\berta^n \in \ze^n$  such that
\begin{equation}
\label{meno}1\le \Bittaa < Q^{m}\end{equation}
and 
\begin{equation}
\label{uno}\left| \frac{\alfa^i}{|{\overline{\acca}} |}-\frac{\berta^i}{\Bittaa}\right|  \le \frac{1}{Q\Bittaa}.\end{equation}
The relation (\ref{meno}) proves part i.

Define ${\overline{\bitta}}=\sum_i \berta^i\tau_i$ and $\berta=(\berta^1,\dots, \berta^t)$.
By relation (\ref{uno}) and the triangle inequality, \begin{equation}
\label{due2}\left|\frac{{\overline{\acca}} }{|{\overline{\acca}} |}-\frac{{\overline{\bitta}}}{\Bittaa}\right| =\left| \frac{\sum_i\alfa^i\tau_i}{|{\overline{\acca}} |}-\frac{\sum_i\berta^i\tau_i}{\Bittaa}\right| \le \left| \frac{\alfa^i}{|{\overline{\acca}} |}-\frac{\berta^i}{\Bittaa}\right| \sum_j|\tau_j|\le \frac{\sum_j|\tau_j|}{Q\Bittaa}\ll \frac{1}{Q\Bittaa}.\end{equation}
This proves part ii.

From relations (\ref{uno}) and Lemma \ref{zero} we deduce
$$\frac{|\berta^i|}{\Bittaa} \le\frac{1}{Q\Bittaa}+\frac{|\alfa^i|}{|{\overline{\acca}} |} \le \frac{1}{Q\Bittaa}+\frac{|\alfa^i|}{\czero |\alfa|}\le\frac{1}{Q\Bittaa}+\frac{1}{\czero } .$$

Since $Q> 1/\czero$, 
$$|\berta^i|\le \frac{2}{\czero} \Bittaa 
\,\,\,{\rm{and}}
\,\,\,|\berta|\ll \Bittaa.$$
Therefore
$$|{\overline{\bitta}}|\le  |\berta|\sum_i|\tau_i| \ll \Bittaa.$$
This shows the first inequality in part iii.

Let  $l$ be an index such that $|{\overline{\acca}} |=|\acca_l|$.
By relation (\ref{due2}) we have  $$\left| \frac{\acca_l}{|{\overline{\acca}} |}-\frac{\bitta_l}{\Bittaa}\right| \le\frac{\sum_i|\tau_i|}{Q\Bittaa}.$$
Whence
$$\Bittaa= \Bittaa\frac{|\acca_l|}{|{\overline{\acca}} |} \le \frac{\sum_i|\tau_i|}{Q}+|\bitta_l|.$$ Since $Q>\sum_i|\tau_i|/\lambda_\rend$,
$$\Bittaa\ll |\bitta_l|.$$
This shows the second inequality of part iii.

\end{proof}

\begin{lem}
\label{dicov}
Let  $ \phi:A^\g \to A^\erre$  be a weighted morphism and  $i_\erre:A^\erre \to A^\g$  such that $ \phi\cdot i_\erre=[a]$.  Let $n=rg-r^2+1$ and $m=nt$. Let  $Q\ge Q_0$, where $Q_0$ is as in (\ref{qzero}).  Then, there exists a  surjective morphism $ \psi:A^\g \to A^\erre$  satisfying
\begin{enumerate}
\item $1\le\Bittaa< Q^{\emme}$,
\item $| \pippo|\ll \Bittaa,$
\item $\left| \frac{ \pippo}{\Bittaa}- \frac{ \fiffo}{| \phi|} \right|\ll \frac{1}{Q\Bittaa},$
\item $\psi\cdot i_\erre=[\Bittaa]$.
\end{enumerate}
In particular, by ii. and iv., $\psi$ is weighted.

\end{lem}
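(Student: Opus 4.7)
The plan is to reduce Lemma \ref{dicov} to a direct application of Lemma \ref{ricopri} to the tuple of entries of $\phi$. First, I would list the entries of the weighted morphism: up to a block-preserving permutation of columns, $\phi$ has $r$ diagonal entries equal to $a \in \mathbb{N}^*$ together with a collection of off-diagonal entries $L^i_{j,k} \in \rend_i$, one for each column of each $L^i_j$. A short computation using $r_i \le g_i$ yields
\[
r(g-r) - \sum_i r_i(g_i - r_i) \;=\; \sum_{i \ne j} r_i(g_j - r_j) \;\ge\; 0,
\]
so the tuple $\bar{a} = (a,\,(L^i_{j,k})) \in \rend^N$ has length $N \le 1 + r(g-r) = n$.

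I would then apply Lemma \ref{ricopri} to $\bar{a}$ with the integer $Q \ge Q_0$ from the statement. This produces an integer $b$ with $1 \le b < Q^{Nt} \le Q^{nt} = Q^{m}$, giving (i), and a tuple $\bar{b} = (b_0,\,(c^i_{j,k})) \in \rend^N$ satisfying $|\bar{b}| \asymp b$ together with
\[
\Bigl| \frac{\bar{a}}{|\bar{a}|} - \frac{\bar{b}}{b} \Bigr| \;\ll\; \frac{1}{Qb}.
\]
A key point is that $b_0 = b$: since $a \in \mathbb{N}^*$ has trivial expansion $a = a\,\tau_1$ in the basis $\tau$ (with $\tau_1 = 1$), the inner Dirichlet estimates used in the proof of Lemma \ref{ricopri} force the integer coefficients $\beta^l_1$ to vanish for $l \ge 2$ (each is an integer of modulus $\le 1/Q < 1$) and $\beta^1_1$ to lie at integer distance $< 1$ from $b$, giving $\beta^1_1 = b$.

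Finally I would define $\psi$ from $\phi$ by replacing the $r$ diagonal entries $a$ with $b$ and each $L^i_{j,k}$ with $c^i_{j,k}$. Item (ii) is immediate since $|\psi| = \max_k |\bar{b}_k| \ll b$. Item (iii) follows entry-wise from the estimate above upon identifying $|\phi|$ with $|\bar{a}|$. Item (iv) is immediate from the construction: $\psi \cdot i_\erre$ extracts the $bI_r$ submatrix of $\psi$. Surjectivity of $\psi$ follows from $\psi \cdot i_\erre = [b]$ being an isogeny, and the weighted property of $\psi$ is the conjunction of (ii) and (iv).

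The hardest part is ensuring the equality $b_0 = b$, which rests on matching the Rosati-norm convention on $\ze \subset \rend$ with the weighted normalization $|\phi| = |a|$; without it one obtains some $b_0 \ne b$ on the diagonal, breaking (iv). The combinatorial bound $\sum_i r_i(g_i - r_i) \le r(g-r)$ is also essential for the exponent in (i) to come out as $m = nt$ rather than something larger. The rest of the argument is a clean entry-wise transcription of Lemma \ref{ricopri}.
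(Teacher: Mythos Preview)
Your approach is the paper's: vectorize the entries of $\phi$ into $\bar a\in\rend^{n}$, apply Lemma~\ref{ricopri}, and rebuild $\psi$ from the output $\bar b$; the only extra step in the paper is the harmless case $|\phi|\le Q^{m}$, where one simply takes $\psi=\phi$. The point $b_0=b$ that you isolate is precisely what the paper glosses over---it writes $\bar b=(b,L'^{1}_{1},\dots)$ and declares (iv) ``evident''---so your entry count $\sum_i r_i(g_i-r_i)\le r(g-r)$ and your discussion of the normalization $|\phi|=|a|$ make you more careful than the paper, not different from it.
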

\begin{proof}
Let
\begin{equation*}
\phi=\left(
\begin{array}{cccccccccc}
a&\dots& 0 &L^1_1 & 0 & \dots& \empty &\dots&0 \\
\empty &\ddots &\empty &\empty &\empty&\empty &\empty &\empty &\empty\\
0&\dots& a&L^1_{r_1} & 0 & \dots& \empty &\dots&0\\
\empty &\empty &\empty &\empty &\ddots &\empty &\empty &\empty &\empty\\
\empty &\empty &\empty &\empty  &\empty &\ddots&\empty &\empty &\empty\\
0 &\empty& \dots& \empty  &0&a&\dots& 0 &L^n_1 \\
\empty &\empty &\empty &\empty  &\empty &\empty &\ddots &\empty \\
0 &\empty &\dots&\empty  &0 &0&\dots& a&L^n_{r_n}  \\
\end{array}
\right)
\end{equation*}

where $L^i_j:A^{g_i-r_i}_i \to A_i$ and \begin{equation}
\label{dia}
|\phi|\ll|a|\ll | \phi|.
\end{equation}

If $| \phi|\le Q^\emme$, no approximation is needed, as $ \phi$ itself satisfies the consequences.

Suppose now that $| \phi|\ge Q^\emme$.
We associate to $\phi$ a vector  $$ \overline{\acca}=(a, L^1_1,\dots, L^1_{r_1}, \dots,  L^n_1,\dots, L^n_{r_n})\in \rend^{rg-r^2+1}.$$ Note that $|\overline{a}|=| \phi|.$
Apply Lemma \ref{ricopri} to the vector $\overline{a}$. Then,  there exists an integer $\Bittaa$ and a vector $\overline{b}$ such that

\begin{itemize}
\item[1)] $1\le \Bittaa < Q^{\emme}$,

\item[2)]   $\left|{{{\overline{\bitta}}}}\right|\ll \Bittaa \ll \left|{{{\overline{\bitta}}}}\right|$
\item[3)] $\left|\frac{{\overline{\acca}} }{|{\overline{\acca}} |}-\frac{{{{\overline{\bitta}}}}}{\Bittaa}\right| \ll \frac{1}{Q\Bittaa}$
\end{itemize}

We reconstruct a matrix $ \psi$ from $\overline{\bitta}$ respecting exactly the same positional rule we used for constructing $\overline{\acca}$ from $ \phi$. 
Namely, let ${\overline{\bitta}}=(b, L'^1_1,\dots, L'^1_{r_1}, \dots,  L'^n_1,\dots, L'^n_{r_n})$, we define
\begin{equation*}
\psi=\left(
\begin{array}{cccccccccc}
b&\dots& 0 &L'^1_1 & 0 & \dots& \empty &\dots&0 \\
\empty &\ddots &\empty &\empty &\empty&\empty &\empty &\empty &\empty\\
0&\dots& b&L'^1_{r_1} & 0 & \dots& \empty &\dots&0\\
\empty &\empty &\empty &\empty &\ddots &\empty &\empty &\empty &\empty\\
\empty &\empty &\empty &\empty  &\empty &\ddots&\empty &\empty &\empty\\
0 &\empty& \dots& \empty  &0&b&\dots& 0 &L'^n_1 \\
\empty &\empty &\empty &\empty  &\empty &\empty &\ddots &\empty \\
0 &\empty &\dots&\empty  &0 &0&\dots& b&L'^n_{r_n}  \\
\end{array}
\right)
\end{equation*}

1) is exactly part i.

2) implies part ii, because $\left|\overline{\bitta}\right|=| \psi|$.

3) gives  part iii.

Part iv. is evident.
\end{proof}

\begin{thm} 
\label{centro}

Let $V\subset A^\g$ be a transverse variety and let $p \in A^{\esse}$ be a point of rank $\esse$. Let $\varepsilon>0$.
There exists  a real $M>0$ such that
 to each special morphism $\tilde\fiffo:A^{\g+\esse}\to A^\erre$  one can associate  a special morphism $\tilde\pippo:A^{\g+\esse}\to A^\erre$  satisfying:
  \begin{enumerate}
 \item $| \tilde\psi|\ll M$,

\item$
\big( (V_{K_0}\times p) \cap (B_{\tilde{\fiffo}} +\mathcal{O}_{\varepsilon/ M  })\big) \subset 
\big((V_{K_0}
 \times p) \cap (B_{\tilde{\pippo}}
  +\mathcal{O}_{\varepsilon'/|\tilde\pippo|  })\big),
$
with  $\varepsilon'\ll \varepsilon$.
\end{enumerate}

\end{thm}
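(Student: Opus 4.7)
The theorem is the abelian-variety analogue of the approximation lemma of \cite{io} and \cite{irmn}. My plan is to upgrade Lemma \ref{dicov} from weighted to special morphisms, and then to exploit the canonical section $i_\erre$ to produce, for each point $x\in V_{K_0}\times p$ close to $\ker\tilde\phi$, an explicit $\eta$ witnessing that $x$ lies close to $\ker\tilde\psi$.

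First I would fix a constant $K=\max(1,K_0,\|p\|)$, so that $\|x\|\le K$ for every $x\in V_{K_0}\times p$, choose an integer $Q\ge\max(Q_0,K/\varepsilon)$ large enough that $K/Q\ll\varepsilon$, and set $M$ to be a suitable constant multiple of $Q^{m}$ with $m=t(rg-r^2+rs+1)$, the latter being the $\ze$-rank of the space of essential entries of a special morphism $\tilde\phi=(\phi\,|\,\phi')$. If $|\tilde\phi|\le M$, I take $\tilde\psi=\tilde\phi$ and there is nothing to do. Otherwise I apply Lemma \ref{ricopri} to the vector in $\rend^{rg-r^2+rs+1}$ formed by the scalar $a$ of the weighted block of $\phi$, by its off-diagonal entries $L^i_j$, and by all entries of $\phi'$, and I reassemble the Dirichlet output according to the same positional rule. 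The resulting $\tilde\psi=(\psi\,|\,\psi')$ is again special --- the $\psi$-block keeps the weighted shape with $a$ replaced by $b$, so $\psi\cdot i_\erre=[b]$ and $|\tilde\psi|\asymp b\ll M$ --- and satisfies the basic approximation
$$\Bigl|\frac{\tilde\psi}{b}-\frac{\tilde\phi}{|\tilde\phi|}\Bigr|\ll \frac{1}{Qb}.$$

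To establish part ii., I take $x=y+\xi$ with $y\in B_{\tilde\phi}$ and $\|\xi\|\le\varepsilon/M$. Since $\tilde\phi(y)=0$, one has $\|\tilde\phi(x)\|=\|\tilde\phi(\xi)\|\le|\tilde\phi|\varepsilon/M$; combining this with the previous approximation applied to $x$ yields $\|\tilde\psi(x)\|\ll K/Q + b\varepsilon/M$. I then pick any $w\in A^\erre$ with $[b]w=\tilde\psi(x)$, so that $\|w\|=\|\tilde\psi(x)\|/b$ by the quadratic scaling of the N\'eron--Tate height, and set $\eta=i_\erre(w)\in A^\g\subset A^{\g+\esse}$. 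The identity $\psi\cdot i_\erre=[b]$ forces $\tilde\psi(\eta)=\tilde\psi(x)$, whence $x-\eta\in B_{\tilde\psi}$; and $|i_\erre|\ll 1$ gives $\|\eta\|\ll\|\tilde\psi(x)\|/b\ll K/(Qb)+\varepsilon/M$. Multiplying through by $|\tilde\psi|\asymp b$ and using $b\ll M$ together with $K/Q\ll\varepsilon$ produces $|\tilde\psi|\cdot\|\eta\|\ll\varepsilon$, which is exactly the required bound $\|\eta\|\le\varepsilon'/|\tilde\psi|$ with $\varepsilon'\ll\varepsilon$.

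The main obstacle I foresee is the bookkeeping at the first step: Dirichlet's theorem must be applied to the \emph{entire} vector of essential entries of $\tilde\phi$ (not just to the weighted part), so that the reconstructed $\tilde\psi$ is still special and so that the estimates on $\|\tilde\psi(x)\|$ and on $\eta$ are governed by the same integer $b$ and the same absolute constants coming from $\rend$. Once the positional rule ensures that $\psi\cdot i_\erre=[b]$ survives the approximation, the extraction of $\eta$ from $\tilde\psi(x)$ via $i_\erre$ is a short height computation, and the calibration of $M$ in terms of $\varepsilon$ and $K$ drops out automatically.
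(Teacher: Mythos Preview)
Your proposal is correct and follows essentially the same route as the paper: the paper applies Lemma \ref{dicov} directly to $\tilde\phi$ (which, being special, is in particular weighted as a morphism $A^{\g+\esse}\to A^\erre$), and this amounts exactly to your Dirichlet approximation of the full vector of essential entries of $\tilde\phi$, after which the paper produces the witness $\xi'=i_\erre(\xi'')$ with $[b]\xi''=-\tilde\psi(x,p)$ and bounds $\|\xi'\|$ by the same triangle-inequality splitting into $\|\tilde\phi(x,p)\|/|\tilde\phi|$ and $\|(\tilde\psi/b-\tilde\phi/|\tilde\phi|)(x,p)\|$ that you describe. The only cosmetic differences are your choice of $K=\max(1,K_0,\|p\|)$ versus the paper's $K_0+\|p\|$, and the precise value of the Dirichlet exponent $m$.
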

\begin{proof}

Define
\begin{equation*}
\begin{split}
Q&\ge  \max\left(Q_0,\left\lceil\frac{\kdue}{\varepsilon}\right\rceil\right) \,\,\,\,{\rm{where}} \,\,\,Q_0\,\,\,{\rm{is\,\,\,as \,\,\,in\,\,\,(\ref{qzero})}}\\
\emme&= t(r(g+s)-r^2+n)\\
M&=Q^\emme.
\end{split}
\end{equation*}

If $|\tilde\fiffo|\le M$,  we simply define $\tilde\pippo=\tilde\fiffo$. Then $\varepsilon/{M}\le \varepsilon/|\tilde\fiffo|$ and 
$$  (V_{K_0}\times p)\cap \left(B_{\tilde{\fiffo}}+\mathcal{O}_{\varepsilon/{M}  }\right)$$ is contained in the right hand side.

 Now, suppose  that $|\tilde\fiffo|\ge M$. 
 By
 Lemma \ref{dicov} applied with $\fiffo=\tilde\fiffo$,  
there exists an integer $\Bittaa$ and  a matrix $\tilde{\pippo}$  such that  
\begin{itemize}
\item[1)] $1\le\Bittaa< Q^{\emme}=M$.
\item [2)]$| \tilde\pippo|\ll \Bittaa\ll |\tilde\pippo|,$
\item[3)] $\left|\frac{\tilde{\fiffo}}{\enne}-\frac{\tilde{\pippo}}{\Bittaa}\right|\ll \frac{1}{Q\Bittaa}.$
\item[4)] $\tilde\pippo\cdot i_\erre=[\Bittaa]$.\end{itemize}

Since   $\tilde\fiffo$ is special, 2) and 4) imply that  also $\tilde\pippo$ is special, as well.

Let $(x,p) \in V_{K_0}\times p$. We want to show that, if $${\tilde{\fiffo}}((x,p) +\xi)=0$$ for $\xi \in\mathcal{O}_{\varepsilon/{M}  }$, then $${\tilde{\pippo}}((x,p) +\xi')=0$$ for $\xi'\in \mathcal{O}_
  {\varepsilon'/|\tilde\pippo|  }$ and $\varepsilon'\ll \varepsilon$.
 
Let  $\xi''$ be a point in $ A^\erre$ such that $$
[\Bittaa]\xi''=-\tilde\pippo(x,p).$$
Define  $\xi'=i_\erre(\xi'')$. Then $$\tilde\pippo(\xi')=[\Bittaa]\xi''=-\tilde\pippo(x,p)$$ and 
$$\tilde\pippo((x,p)+\xi')=0.$$ It follows
$$(x,p)\in  (V_{K_0}\times p)\cap (B_{\tilde{\pippo}}+\mathcal{O}_{||\xi'||}),$$
where $\tilde\pippo$ is special and $|\tilde\pippo|\ll M$.

It remains to prove that
  $$||\xi'|| \ll \frac{\varepsilon}{|\tilde\pippo|  }.$$  
Obviously
\begin{equation*}
\enne\tilde\pippo(x,p)  = \ti\left({\tilde\fiffo(x,p)}-{\tilde\fiffo(x,p)}\right)+\enne{\tilde\pippo(x,p)}.
\end{equation*}
It holds
\begin{equation*}
\begin{split}||\xi'||=||\xi''||= \frac{||\tilde\pippo(x,p)||}{\Bittaa}&=\frac{1}{\enne\ti}\left|\left|\ti\left({\tilde\fiffo(x,p)}-{\tilde\fiffo(x,p)}\right)+\enne{\tilde\pippo(x,p)}\right|\right|\\
&\le \frac{1}{\enne}\Big|\Big|{\tilde\fiffo(x,p)}\Big|\Big|+\frac{1}{\enne\ti}\Big|\Big|\enne{\tilde\pippo(x,p)}-b{\tilde\fiffo(x,p)}\Big|\Big|.
\end{split}
\end{equation*}
We estimate the two norms on the right. 

On one hand
\begin{equation*}
\frac{||\tilde\fiffo(x,p)||}{\enne}= \frac{||\tilde\fiffo(\xi)||}{\enne} \ll  ||\xi||\\
\le \frac{\varepsilon}{{M}  } \\\le \frac{\varepsilon}{\ti  },
\end{equation*}
where in the last inequality we use that  $\ti\le M$. 

On the other hand, we assumed  $$||(x,p)||\le \kdue.$$

Using relation 3) and that $Q\ge \left\lceil\frac{\kdue}{\varepsilon}\right\rceil$, we estimate
\begin{equation*}
\begin{split}
\frac{1}{\enne\ti}\Big|\Big|\enne{\tilde\pippo(x,p)}-\ti\tilde\fiffo(x,p)\Big|\Big|&\le
\left|\frac{\tilde\fiffo}{\enne}-\frac{\tilde\pippo}{\ti}\right|||(x,p)||\\&\ll
\frac{||(x,p)||}{Q\Bittaa}\\
&\le
\frac{\varepsilon ||(x,p)||}{(\kdue) \ti  }\le
\frac{\varepsilon}{\ti  }.
\end{split}
\end{equation*}

By 2), we conclude

\begin{equation*}
||\xi'||\ll \frac{\varepsilon}{\ti  }+\frac{\varepsilon}{\ti  }\ll \frac{\varepsilon}{|\tilde\pippo|  }.
\end{equation*}

\end{proof}

\section{The non-density of each intersection}

\label{due}

We associate to a surjective morphism $\phi:A^\g \to A^\erre$ an isogeny of $A^\g$.

\begin{D} To a  weighted morphism $\phi=[\phi^1,\dots,\phi^n] :A^\g \to A^\erre$ 
with
$\phi^i=(aI_{r_i}|L^i)$ we associate:
an isogeny $$\Phi=[\Phi^1, \dots, \Phi^n]: A^\g \to A^\g$$
where
\begin{equation*}
\Phi^i=
\left(\begin{array}{c}  
\phi^i\,\,\,\\0\,\,\,\,|\,\, I_{g_i-r_i}
\end{array}\right)=\left(\begin{array}{cc}  
aI_{r_i} &L^i\\0 &\,\,I_{g_i-r_i}
\end{array}\right)
\end{equation*}

\end{D}
We estimate degrees.
\begin{lem} 
\label{df}The following estimates hold:
\begin{enumerate}

\item\begin{equation*}
\deg_\elle  \Phi(V)\ll | \phi|^{2d}\deg_\elle V.\end{equation*}

\item $$ \deg_\elle \phi(V) \ll | \phi|^{2d}\deg_\elle V.$$
\end{enumerate}
\end{lem}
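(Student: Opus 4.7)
For part (i), the natural tool is the projection formula applied to the generically finite morphism $\Phi|_V : V \to \Phi(V)$. Since $\Phi$ is an isogeny of $A^\g$, one has $\dim \Phi(V) = \dim V = d$, and pushing forward $[V]$ gives
$$
\deg_\elle \Phi(V) \cdot \deg(\Phi|_V) \;=\; \frac{c_1(\Phi^*\elle)^d \cdot [V]}{d!} \;=\; \deg_{\Phi^*\elle} V.
$$
As $\deg(\Phi|_V)\ge 1$, it is enough to prove the Néron--Tate style bound $\deg_{\Phi^*\elle} V \ll |\phi|^{2d}\deg_\elle V$.

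The central step is a comparison in the Néron--Severi group of $A^\g$. Writing $\elle = \bigotimes_j p_j^*\elle_{i(j)}$ over the coordinate projections $p_j : A^\g \to A_{i(j)}$, one has $\Phi^*\elle = \bigotimes_j (p_j\circ\Phi)^*\elle_{i(j)}$, and each coordinate $p_j\circ\Phi = \sum_k \alpha_{jk}\circ p_k$ is a sum of morphisms of Rosati norm $\le |\phi|$. Applying the theorem of the cube to the symmetric bundle $\elle_{i(j)}$ expands each $(p_j\circ\Phi)^*\elle_{i(j)}$ as a quadratic combination of pullbacks by morphisms of norm $\ll |\phi|$, and each such pullback is numerically $\ll |\phi|^2$ times $\elle$. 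The non-isogeny of the simple factors $A_i$ kills every cross-block term, so the combinatorial constants depend only on $\rend$ and on $\g$. Intersecting $d$ times with $[V]$ yields $c_1(\Phi^*\elle)^d\cdot V \ll |\phi|^{2d}\, c_1(\elle)^d\cdot V$, which is (i).

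For part (ii), the key observation is the factorization $\phi = \pi\circ \Phi$, where $\pi : A^\g \to A^\erre$ is the coordinate projection onto the first $r_i$ entries of each block; this is visible from the construction, since the top $r_i$ rows of $\Phi^i$ reproduce $\phi^i$. Thus $\phi(V) = \pi(\Phi(V))$, and because $\pi^*\elle_{A^\erre}$ is a tensor sub-factor of $\elle_{A^\g}$, the standard bound $\deg_{\elle_{A^\erre}} \pi(W) \le \deg_{\pi^*\elle_{A^\erre}} W \le \deg_{\elle_{A^\g}} W$ (when dimensions align; trivial otherwise) reduces (ii) to (i). The main obstacle is the Néron--Severi comparison in the middle paragraph: one must carry out the cube-theorem expansion block by block, verify that the non-isogeny hypothesis suppresses off-block contributions, and check that the implicit multiplicative constant depends only on $(\rend,\g)$ and not on $\phi$ itself.
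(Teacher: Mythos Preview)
Your proof is correct and shares the paper's overall architecture: both reduce (i) to bounding $\deg_{\Phi^*\elle} V = c_1(\Phi^*\elle)^d \cdot [V]$, and both derive (ii) from (i) via the factorisation $\phi = \pi\circ\Phi$ together with the fact that projection onto coordinate factors does not increase $\elle$-degree.

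The technical core, however, differs. The paper bounds the degree of the pulled-back divisor $\Phi^{-1}c_1(\elle)$ directly: it observes that the group law on $A^\g$ is given by degree-$2$ polynomials in the projective coordinates of the embedding $i_\elle$, so that any isogeny $\psi$ is represented by polynomials of degree $\ll|\psi|^2$; hence $\deg_\elle \psi^{-1}(X)\ll|\psi|^2\deg_\elle X$ for every divisor $X$, and Bezout's theorem then yields $c_1(\Phi^*\elle)^d\cdot[V]\ll(\deg_\elle c_1(\Phi^*\elle))^d\deg_\elle V\ll|\phi|^{2d}\deg_\elle V$. Your route through the theorem of the cube and a numerical comparison in the N\'eron--Severi group is more intrinsic and avoids the explicit projective embedding; the paper's argument is more elementary and makes the dependence of the implicit constant on the ambient data (namely $\g$ and $\deg_\elle A^\g$) immediately visible. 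Both yield the same bound with constants depending only on the fixed polarised abelian variety.
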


\begin{proof}
In the first instance we show that, for an isogeny $\psi: A^\g \to A^\g$ and a divisor $X\subset A^\g$, 
\begin{equation}
\label{gradoiper}\deg_\elle \psi^{-1} (X) \ll | \psi|^2 \deg_\elle X.
\end{equation}

Let $i_\elle:A^\g \to \mathbb{P}^N$ be the embedding defined by $\elle$. We denote by $z=(z_0: \dots :z_{N})$ the coordinates of $\mathbb{P}^N$.
Recall that the operation of sum in an abelian variety is given by  polynomials in $z$ of degree 2.  Then, $ i_{\elle *}\psi(x)$ are polynomials in $z$  of degree $\ll \max_{ji}|\psi_{ji}|^2=| \psi|^2$, where the multiplicative constant depends on $\g$ and $\deg_\elle A^\g$. So, If $p(z)$ is a polynomial of degree $\deg X$ defining $X$, the variety $\psi^{-1}(X)$ is defined by $p( i_{\elle *}\psi(x))$ which  is a polynomial in $z$ of degree $\ll  | \psi|^{2 }\deg X$.

We now consider a subvariety $V$ of dimension $d$.
We write 
$$\deg_\elle \psi_*(V)=\deg _{\psi^*\elle}V= c_1(\psi^*\elle)^{d} \cdot V,$$
where on the right we mean the intersection number and $c_1(\cdot)$ is a representative of the first Chern-class. By  Bezout's Theorem, 
$$\deg_\elle \psi_*(V)\ll (\deg_\elle c_1(\psi^*\elle))^d \deg_\elle V.$$
Note that $c_1(\psi^*\elle) = \psi^{-1} c_1(\elle)$, and $c_1(\elle)$ is a divisor of degree $\deg_\elle A^\g$.  By (\ref{gradoiper}), we deduce $\deg_\elle c_1(\psi^*\elle) \ll | \psi|^2 \deg_\elle A^\g$. 
We conclude
\begin{equation}
\label{gradoiper1}\deg_\elle \psi_*(V) \ll | \psi|^{2d} \deg_\elle V,\end{equation}

where the multiplicative constant depends on $\g$ and $\deg_\elle A^\g$.

Part i. is given by (\ref{gradoiper1})  applied to $\psi=\Phi$. Note that $\deg_\elle \psi(V)\le \deg_\elle \psi_*(V)$.

 In the chosen polarization, forgetting coordinates makes degrees decrease.
Note that $\phi(V)=\pi\Phi(V)$, where $\pi$ is the projection on  $r$ coordinates.
By part i.
$$\deg_\elle  \phi(V)\le \deg_\elle \Phi(V)\ll  | \phi|^{2d} \deg_\elle V.$$

\end{proof}

\begin{propo}
\label{EM}

Assume that  Conjecture \ref{bofu1} holds.  Let $V\subset A^\g$ be a transverse variety with finite stabilizer. Assume that the codimension of $\phi$ is at least $d+1$. Then, for any $\eta>0$, there exist effective constants $\ecuno$ and $\ecdue$ such that,
 for any point $y\in A^\g $,
 \begin{enumerate}
 \item
$$
 \mu(\phi(V+\punto))>\ecuno \frac{1}{| \phi|^{\coefa}},$$
 \item $$  \mu\left(\Phi(V+\punto)\right )>\ecdue | \phi|^{\frac{1}{\codv}-\eta}.$$
 \end{enumerate}

\end{propo}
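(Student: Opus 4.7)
The plan is to apply Conjecture~\ref{bofu1} in two complementary ways: for (i), to the image variety $\phi(V+\punto)\subset A^\erre$ with the identity isogeny; for (ii), to $V+\punto\subset A^\g$ with the associated isogeny $\Phi$. In both cases the degree estimates of Lemma~\ref{df} convert the Bogomolov-type bound into an estimate in $|\phi|$.

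For (i), set $W=\phi(V+\punto)=\phi(V)+\phi(\punto)$. First, $W$ is transverse in $A^\erre$: if $W\subset H+q$ with $H\subsetneq A^\erre$ a proper algebraic subgroup, pulling back gives $V+\punto\subset\phi^{-1}(H+q)$, a translate of the proper algebraic subgroup $\phi^{-1}(H)\subsetneq A^\g$, contradicting the transversality of $V$. The stabilizer of $W$ reduces to $\phi(\stab V)$ modulo $\ker\phi$, so it is finite. Conjecture~\ref{bofu1} with $\psi=\mathrm{id}$ then gives
$$\mu(W)\gg\left(\frac{\deg A^\erre}{\deg W}\right)^{\frac{1}{2\cod W}+\eta},$$
where the $+\eta$ sign is selected by the $\min$ because the ratio is less than $1$ for $|\phi|$ large. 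Lemma~\ref{df}(ii) gives $\deg W\ll|\phi|^{2d}\deg V$, while $\dim W\leq d<\dim A^\erre$ forces $\cod W\geq 1$. The worst case $\cod W=1$ yields $\mu(W)\gg|\phi|^{-d-2d\eta}$, which is (i) after reparametrising $\eta$.

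For (ii), the variety $V+\punto$ is itself transverse with finite stabilizer (both properties are translation-invariant). Conjecture~\ref{bofu1} applied with the isogeny $\Phi$, together with $\mu_{\Phi^*\elle}(V+\punto)=\mu_\elle(\Phi(V+\punto))$ coming from $h_{\Phi^*\elle}=h_\elle\circ\Phi$, gives
$$\mu(\Phi(V+\punto))\gg\left(\frac{\deg_{\Phi^*\elle}A^\g}{\deg_{\Phi^*\elle}(V+\punto)}\right)^{\frac{1}{2\cod V}-\eta}.$$
The block-triangular shape of $\Phi$, with diagonal block $aI_{r_i}$ in each simple factor $A_i^{g_i}$, gives $\deg\Phi\asymp|a|^{2\sum d_ir_i}\asymp|\phi|^{2\cod\phi}$, hence $\deg_{\Phi^*\elle}A^\g\asymp|\phi|^{2\cod\phi}$. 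By the projection formula $\deg_{\Phi^*\elle}(V+\punto)=\deg(\Phi|_V)\cdot\deg_\elle\Phi(V+\punto)$, and Lemma~\ref{df}(i) bounds the second factor by $|\phi|^{2d}\deg V$. The crucial point is that $\deg(\Phi|_V)\leq|\stab V|$ is bounded independently of $\phi$: for generic $v\in V$ and $k\in\ker\Phi\setminus\stab V$, the set $V\cap(V-k)$ is a proper closed subvariety of $V$, so the generic fibre $V\cap(v+\ker\Phi)$ reduces to $v+(\ker\Phi\cap\stab V)$. Combining, the ratio is $\gg|\phi|^{2(\cod\phi-d)}\geq|\phi|^2$ since $\cod\phi\geq d+1$, and the $1/(2\cod V)-\eta$ power gives $\mu(\Phi(V+\punto))\gg|\phi|^{1/\cod V-2\eta}$, which is (ii).

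The main obstacle lies in controlling $\deg(\Phi|_V)$ in (ii): without the finite-stabilizer hypothesis, this degree could a priori grow as $|\ker\Phi|\asymp|\phi|^{2\cod\phi}$, wiping out the gain from $\deg\Phi$ entirely. The finite-stabilizer assumption on $V$ is precisely what confines $\deg(\Phi|_V)$ to a constant depending only on $V$, and is where that hypothesis does essential work; a secondary subtlety is the verification that $W=\phi(V+\punto)$ in part (i) inherits finiteness of its stabilizer from $V$.
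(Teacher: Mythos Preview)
Your argument follows the same plan as the paper: for (i) apply Conjecture~\ref{bofu1} to $\phi(V+\punto)\subset A^\erre$ with the identity isogeny, and for (ii) apply it to $V+\punto\subset A^\g$ with the isogeny $\Phi$, then feed in the degree estimates of Lemma~\ref{df}. Both parts reach the right conclusion.

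The one place you diverge from the paper is in bounding $\deg_{\Phi^*\elle}(V+\punto)$ in part (ii). You split it via the projection formula as $\deg(\Phi|_V)\cdot\deg_\elle\Phi(V+\punto)$ and then argue separately that $\deg(\Phi|_V)\le|\stab V|$. This is correct, but unnecessary: the proof of Lemma~\ref{df} already establishes the push-forward bound $\deg_\elle\Phi_*(V)\ll|\phi|^{2d}\deg_\elle V$ directly (via B\'ezout on $c_1(\Phi^*\elle)^d\cdot V$), and the paper simply quotes this. That bound holds irrespective of the stabilizer, so your diagnosis that controlling $\deg(\Phi|_V)$ is ``where the finite-stabilizer hypothesis does essential work'' is off the mark. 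The finite-stabilizer assumption on $V$ is needed only because Conjecture~\ref{bofu1} itself demands it of the variety $X$ to which it is applied---namely $V+\punto$ in (ii) and $\phi(V+\punto)$ in (i).

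On that last point: your one-line justification that $\mathrm{Stab}\,\phi(V+\punto)$ is finite (``reduces to $\phi(\stab V)$ modulo $\ker\phi$'') is not a valid argument; the stabilizer of $\phi(V)$ is $\phi\bigl(\mathrm{Stab}(V+\ker\phi)\bigr)$, and there is no obvious reason $\mathrm{Stab}(V+\ker\phi)$ should equal $\stab V+\ker\phi$. The paper, for its part, only asserts that $\phi(V+\punto)$ is transverse and does not verify the stabilizer condition either, so this is a point both treatments pass over.
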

\begin{proof}

i.

 Since $V$ is irreducible, transverse and defined over $\overline{\qe}$,  $\phi(V+\punto)$ is as well.

Recall  that   the codimension of $\phi\ge d+1$.  Then $\phi(V+\punto)\subsetneq A^\erre$ has codimension  and dimension at least $1$. 
Apply Conjecture \ref{bofu1} with $A=A^\erre$, $\psi=id_{A^\erre}$ and $X=\phi(V +\punto)$. Then
\begin{equation*}
\mu_\elle(\phi(V+\punto)) > c(A^\erre, \eta) \min_{\eta'=\pm \eta}\left(\frac{\deg _{\elle} A^\erre}{\deg _{\elle} (V+\punto)}\right)^{\frac{1}{2}+\eta'}
\end{equation*}

Degrees are preserved by translations,
hence Proposition \ref{df} ii. implies  $$\deg (\phi(V+\punto))=\deg\phi(V)\ll | \phi|^{2d}\deg V.$$
If follows
$$\mu_\elle(\phi(V+\punto))>c(A^\erre,\eta)
 \frac
 {(\deg A^\erre)^{\frac{1}{2}-\eta}}
 {(\deg V)^{\frac{1}{2}+\eta}}
 \frac{1}{| \phi|^{d+2\eta}}.$$
Define
$$ \ecuno=c\left(A^\erre,\frac{\eta}{2}\right) \frac{(\deg A^\erre)^{\frac{1}{2}-\frac{\eta}{2}}}{(\deg V)^{\frac{1}{2}+\frac{\eta}{2}}}.$$
Then
$$\mu(\phi(V+\punto))>  \frac{\ecuno}{| \phi|^{d+\eta}}.$$

ii. Recall that, for any variety $X$, \begin{equation*}
\begin{split}\mu_{\ef^*\elle}X&=\mu_\elle\left(\Phi(X)\right ),\\
\deg_{\ef^*\elle}X&=\deg_\elle \ef_*X.
\end{split}
\end{equation*}

Apply Conjecture \ref{bofu1} with $A=A^\g$, $\psi=\ef$ and $X=V +\punto$. 
We obtain 
\begin{equation*}
\begin{split}\mu_\elle\left(\Phi(V+\punto)\right )&> c(A^\g, \eta) \min_{\eta'=\pm \eta}\left(\frac{\deg _{\ef^*\elle} A^\g}{\deg _{\ef^*\elle} (V+\punto)}\right)^{\frac{1}{2\cod V}+\eta'}\\
& =c(A^\g, \eta) \min_{\eta'=\pm \eta}\left(\frac{\deg _{\ef^*\elle} A^\g}{\deg _{\ef^*\elle} (V)}\right)^{\frac{1}{2\cod V}+\eta'}.
\end{split}
\end{equation*}

Recall that (see, for instance, \cite{l-b} (6.6) Corollary page 68)  $$\deg_{\ef^*\elle}A^\g=|\ker \ef|=a^{2(\sum_id_ir_i)}.$$
By assumption $\sum_id_ir_i\ge d+1$ and $|\phi|\ll a$. So $$\deg_{\ef^*\elle}A^\g\ge a^{2(d+1)}\gg | \phi|^{2(d+1)}.$$

By Lemma \ref{df} i.,
$$\deg_{\ef^*\elle}(V)=\deg_\elle (\Phi_*(V))\ll | \phi|^{2d} \deg_\elle  V.$$
Thus
\begin{equation*}
\mu_\elle\left(\Phi(V+\punto)\right )> c'(A^\g, \eta)\min_{\eta'=\pm \eta}\left(\frac{| \phi|^{2d+2} \deg_\elle A^\g}{| \phi|^{2d}\deg_\elle V}\right)^{\frac{1}{2\cod V}+\eta'}.
\end{equation*}
Define
$$\ecdue=c'\left(A^\g, \frac{\eta}{2}\right) \min_{\eta'=\pm \frac{\eta}{2}}\left(\frac{\deg_\elle A^\g}{\deg_\elle V}\right)^{\frac{1}{2\cod V}+\eta'}.$$
\end{proof}

We come to the  main proposition of this section; each set in the
union is non Zariski-dense. 
\begin{thm}
\label{finito}
Assume  Conjecture \ref{bofu1}. Let $V\subset A^\g$ be a transverse variety with  finite stabilizer. 
Then, there exists an effective $\ecinque>0$ such that
for $\varepsilon \le \ecinque$, for
 all weighted morphisms $\phi$  of codimension $\ge d+1$ and for all $\punto\in i_\erre(A^\erre)$, the set 
$$\left(V_\kzero+\punto\right)\cap \left(B_\phi+\mathcal{O}_{\varepsilon/| \phi| }\right)$$  
is non Zariski-dense in $V+y$.

\end{thm}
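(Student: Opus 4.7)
The plan is to split the argument according to the size of $|\phi|$: in each regime, the Bogomolov-type bound of Proposition~\ref{EM} forces the image of our set under a suitable morphism into a proper closed subvariety of the target, which then pulls back to a proper closed subvariety of $V+y$.

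I begin with the key height computation. If $v\in V_{\kzero}$ and $v+y\in B_\phi+\mathcal{O}_{\varepsilon/|\phi|}$, then $\phi(v+y)=\phi(\xi)$ for some $\xi$ of norm at most $\varepsilon/|\phi|$, so $\|\phi(v+y)\|\ll\varepsilon$. Since $\phi$ is weighted with $\phi\cdot i_\erre=[a]$, the embedding $i_\erre$ places $A^\erre$ into the first block of coordinates of $A^\g$; hence any $y=i_\erre(y_0)$ has vanishing last $g-r$ coordinates. Writing $\Phi(z)=(\phi(z),z'')$ with $z''$ the last $g-r$ coordinates of $z$, this yields
$$\|\Phi(v+y)\|\le\|\phi(v+y)\|+\|v''\|\ll\varepsilon+\kzero.$$

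In the regime $|\phi|\ge C_0$, I apply Proposition~\ref{EM}~ii.\ with $\eta$ small enough that $\tfrac{1}{\codv}-\eta>0$; the hypotheses pass to $V+y$ (translation preserves transversality, finite stabilizer and degree), giving $\mu(\Phi(V+y))>\ecdueb|\phi|^{1/\codv-\eta}$. Choosing $C_0$ so that $\ecdueb C_0^{1/\codv-\eta}$ exceeds the absolute constant times $\varepsilon+\kzero$ bounding $\|\Phi(v+y)\|$, we find $\Phi(v+y)\in\Phi(V+y)\cap\mathcal{O}_{c(\varepsilon+\kzero)}$, a set not Zariski-dense in $\Phi(V+y)$ by the definition of the essential minimum. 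Pulling back the resulting proper closed subvariety under the isogeny $\Phi$ gives a proper closed subvariety of $V+y$ containing our set.

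In the complementary regime $|\phi|\le C_0$, I apply Proposition~\ref{EM}~i.: $\mu(\phi(V+y))>\ecunoa/|\phi|^{d+\eta}\ge\ecunoa/C_0^{d+\eta}$. Taking $\ecinque<\ecunoa/C_0^{d+\eta}$ and $\varepsilon\le\ecinque$ forces $\phi(v+y)\in\phi(V+y)\cap\mathcal{O}_{c\varepsilon}$ into a proper closed subvariety $W\subsetneq\phi(V+y)$; since $V+y\to\phi(V+y)$ is surjective, $(V+y)\cap\phi^{-1}(W)$ is proper closed in $V+y$ and contains our set.

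The main delicacy is coordinating the two constants: $C_0$ depends on $\varepsilon+\kzero$, while $\ecinque$ is defined via $C_0$. However $C_0$ stays bounded as $\varepsilon\to 0$ by a quantity determined by $\kzero$, $V$ and $\eta$, so one can choose $\ecinque$ small enough to satisfy $\ecinque<\ecunoa/C_0(\ecinque)^{d+\eta}$ simultaneously with $\varepsilon\le\ecinque$. Effectivity of $\ecinque$ is inherited from that of $\ecunoa$ and $\ecdueb$ in Proposition~\ref{EM}, which in turn is inherited from the assumed effectivity in Conjecture~\ref{bofu1}.
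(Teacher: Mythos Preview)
Your proof is correct and follows essentially the same strategy as the paper's: split according to whether $|\phi|$ is large or small, apply Proposition~\ref{EM}~ii.\ (via the isogeny $\Phi$) in the large regime and Proposition~\ref{EM}~i.\ (via $\phi$ itself) in the small regime, and in each case pull back the non-dense locus to $V+y$.

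The only noteworthy difference is in how you coordinate the two thresholds. The paper first imposes $\varepsilon\le \kzero/g$, so that $\|\Phi(x+y)\|\le \kzero$ outright (rather than $\ll \varepsilon+\kzero$); this makes the cutoff $m=(\kzero/\ecdueb)^{\codv/(1-(\codv)\eta)}$ depend only on $\kzero$ from the start, and then $\ecinque=\tfrac{1}{g}\min(\kzero,\ecunoa/m^{d+1})$ is fixed with no circularity. Your version reaches the same conclusion, but only after the extra observation that $C_0(\varepsilon)$ stabilises as $\varepsilon\to 0$; imposing $\varepsilon\le \kzero$ at the outset would let you drop that paragraph entirely. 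A minor cosmetic point: $\Phi$ is block-diagonal, so $\Phi(z)$ interleaves the pieces $\phi^i(z^i)$ and $(z^i)''$ rather than concatenating $(\phi(z),z'')$, but since the height is a max over coordinates this does not affect your norm estimate.
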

\begin{proof}

Choose 
$\eta\le\frac{1}{2}.$
Let $$\ecunoa=\min_{\erre}\ecuno$$ where  $\ecuno$ is as in Proposition \ref{EM} i. and $\erre$ varies over the multi-indeces such that $\sum d_ir_i\ge d+1$ and $r_i\le g_i$. Let $$\ecdueb=\ecdue$$ be as in Proposition \ref{EM} ii.  
Define  
 \begin{equation*}
 \begin{split}
 m&=\left(\frac{\kzero}{\ecdueb
  }\right)^{\frac{\codv}{1-\coeta(\codv)\eta}},\\
  \ecinque&=\frac{1}{g} \min \left({\kzero}, \frac{\ecunoa}{m^{\expm}}\right).\\
   \end{split}
  \end{equation*}
 Choose $$\varepsilon \le \ecinque.$$

We distinguish two cases: either 
$| \phi| \le m$ or $| \phi| \ge m.$

Case (1)   $\,\,\,| \phi| \le m$.

 Let $x+\punto  \in (V_\kzero+\punto )\cap
(B_\phi+\mathcal{O}_{\varepsilon/| \phi| })$, where $y\in i_\erre(A^\erre)$.   Then  $$\phi(x+\punto )=\phi(\xi)$$
for $||\xi ||\le \varepsilon/| \phi| $. Since   $\varepsilon \le\frac{ \ecunoa }{gm^{\expm}}$ and $| \phi|\le m$, 
$$||\phi(x+\punto )||=||\phi(\xi)||  \le {g\varepsilon}\le{ \frac{ \ecunoa }{m^{\expm}} }\le \frac{ \ecunoa }{| \phi|^{\expm}}.$$
In Proposition \ref{EM} i. we have proven
$${ \frac{ \ecunoa }{| \phi|^{\expm}}}<\mu(\phi(V+\punto)).$$

 We deduce that  $\phi(x+\punto )$ belongs to the non Zariski-dense set $$Z_2=\phi(V+\punto)\cap {\mathcal{O}}_{\epsilon_1/m^\expm  }.$$

Since $V$ is transverse, the dimension of $\phi(V+\punto)$ is at least $1$. Consider  the restriction  morphism $\phi_{|V+\punto}:V+\punto \to \phi(V+\punto)$. Then $x+y$ belongs to the non Zariski-dense set $\phi^{-1}_{|V+\punto}(Z_2)$.

Case (2)  $\,\,\,| \phi|\ge m$.

Let $x +\punto  \in  (V_{\kzero}+\punto )\cap \left(B_\phi+\mathcal{O}_{\varepsilon/| \phi| }\right)$, where $y\in i_\erre(A^\erre)$. Then
$$\phi(x+y)=\phi(\xi)$$  for  $||\xi||\le \varepsilon/| \phi| $ and
$$\Phi(x+y)=(\phi^1(x+y),\overline{x}^1,\dots,\phi^n(x+y),\overline{x}^n),$$  where $\overline{x}^i$ are some of the coordinates of $x$. So
$$ ||\Phi(x+\punto )||\ll  \max\left(||\phi(\xi)||,||x||\right).$$
Since $||\xi||\le \frac{\varepsilon}{| \phi|}$ and
$\varepsilon \le \frac{K_0}{g}$, then $$||\phi(\xi)||\ll  g{\varepsilon}\le K_0.$$
Also $||x||\le K_0$, because $x\in V_\kzero$. Thus
 $$||\Phi(x+\punto )||\le \kzero.$$
 
Since  $| \phi| \ge m=\left(\frac{\kzero}{\ecdueb }\right)^{\frac{\codv}{1-\coeta(\codv)\eta}}$, 
 \begin{equation*}
  \kzero \le\ecdueb | \phi|^{\frac{1}{\codv}-\coeta\eta}.
   \end{equation*}
 In Proposition \ref{EM}  we have proven
 $$\ecdueb
 | \phi|^{\frac{1}{\codv}-\coeta\eta}<\mu(\Phi(V+\punto)).$$  So  
 $$||\Phi(x+\punto )||\le K_0< \mu(\Phi(V+\punto)).$$ 
 We deduce that $\Phi(x+\punto )$ belongs to the  non Zariski-dense set $$Z_1=\Phi(V+\punto)\cap {\mathcal{O}}_{K_0}.$$ 
 The restriction morphism $\Phi_{|V+\punto}:V+\punto \to \Phi(V+\punto)$ is finite, because $\Phi$ is an isogeny. Then 
 $x+y$ belongs to the non Zariski-dense set $\Phi_{|V+\punto}^{-1}(Z_1)$.

\end{proof}

 \begin{propo}
 \label{includo} Let $V\subset A^\g$ be transverse and let $p \in A^\esse$ be a point of rank $\esse$.
Let $\tilde{\phi}=(\phi|\phi'): A^{\g+\esse} \to A^\erre$  be a special morphism. Then, there exists $y\in i_\erre(A^\erre) $ such that  the map $(x,p)\to x+y$ defines an injection
 \begin{equation*}
\left( (V_\kzero\times p) \cap \left(B_{\tilde{\phi}}
 +\mathcal{O}_{\varepsilon/| \phi| }\right) \right)\hookrightarrow \Big(
 (V_\kzero+\punto )\cap
 \left(B_\phi+\mathcal{O}_{\varepsilon'/| \phi| }\right)\Big),
 \end{equation*}
where $\varepsilon'\ll \varepsilon$.

\end{propo}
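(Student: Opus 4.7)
The plan is to absorb the $\phi'(p)$ contribution of $\tilde\phi$ into a translation of $V$ by a carefully chosen point $y$, exploiting the weighted structure of $\phi$. Since $\phi$ is weighted, one has an embedding $i_\erre:A^\erre\to A^\g$ with $\phi\cdot i_\erre=[a]$ and $| \phi|\ll a$. As $A^\erre(\overline{\qe})$ is divisible, I would pick $z\in A^\erre$ with $[a]z=\phi'(p)$ and set $y=i_\erre(z)$, so that $y\in i_\erre(A^\erre)$ and $\phi(y)=[a]z=\phi'(p)$. Then for every $x\in A^\g$ one has $\phi(x+y)=\phi(x)+\phi'(p)=\tilde\phi(x,p)$, and this is the identity that drives the whole argument.

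Next, fix $(x,p)$ in the source set; the membership $(x,p)\in B_{\tilde\phi}+\mathcal{O}_{\varepsilon/| \phi|}$ furnishes some $\eta\in\mathcal{O}_{\varepsilon/| \phi|}$ with $\tilde\phi((x,p)-\eta)=0$, hence $\tilde\phi(x,p)=\tilde\phi(\eta)$, and combined with the identity above this becomes $\phi(x+y)=\tilde\phi(\eta)$. To exhibit $x+y$ as a point of $B_\phi+\mathcal{O}_{\varepsilon'/| \phi|}$, I would apply $i_\erre$ a second time: choose $\xi''\in A^\erre$ with $[a]\xi''=\tilde\phi(\eta)$ and set $\xi'=i_\erre(\xi'')$. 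Then $\phi(\xi')=[a]\xi''=\tilde\phi(\eta)=\phi(x+y)$, so $x+y-\xi'\in B_\phi$, as wanted.

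The quantitative heart of the proof is the bound on $||\xi'||$. From $[a]\xi''=\tilde\phi(\eta)$ and the quadratic nature of the canonical height, $||\xi''||=||\tilde\phi(\eta)||/a$; the specialness of $\tilde\phi$ gives $|\tilde\phi|\ll | \phi|\ll a$, and the standard bound $||\tilde\phi(\eta)||\ll |\tilde\phi|\cdot||\eta||$ together with $||\eta||\le\varepsilon/| \phi|$ yields $||\tilde\phi(\eta)||\ll \varepsilon$. Since $i_\erre$ has bounded norm, I would estimate
$$
||\xi'||\ll ||\xi''||\ll \frac{\varepsilon}{a}\ll \frac{\varepsilon}{| \phi|},
$$
so some $\varepsilon'\ll\varepsilon$ works. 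Injectivity of $(x,p)\mapsto x+y$ is automatic: with $y$ and $p$ fixed, the map reduces to translation by $y$ on the $x$-coordinate.

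I do not expect a genuine obstacle; the argument is essentially a book-keeping exercise on weighted morphisms, directly parallel to what is done inside the proof of Theorem~\ref{centro}. The only subtle point---and the one dictating why the hypothesis is that $\tilde\phi$ be \emph{special} rather than merely an arbitrary extension of $\phi$---is that the condition $|\tilde\phi|\ll | \phi|$ is exactly sharp enough to prevent the error $\tilde\phi(\eta)$, once divided by $a$ to pass through $i_\erre$, from losing the crucial factor $1/| \phi|$ required on the right-hand side.
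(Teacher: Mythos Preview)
Your proposal is correct and follows essentially the same route as the paper: choose $y=i_\erre(z)$ with $[a]z=\phi'(p)$ so that $\phi(y)=\phi'(p)$, then lift $\tilde\phi(\eta)$ back through $i_\erre$ to produce the small correction $\xi'$, and bound $||\xi'||$ using $|\tilde\phi|\ll|\phi|\ll a$. The only cosmetic difference is that the paper notes $||\xi'||=||\xi''||$ exactly (since the height is a maximum over coordinates and $i_\erre$ is a coordinate inclusion), whereas you invoke that $i_\erre$ has bounded norm; either suffices.
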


\begin{proof}

Let $\tilde\phi=(  \phi|\phi')$ be special. By definition of special  $aI_r$ is a submatrix of $\phi$ and $|\phi|\ll a$.  Recall that $\phi \cdot i_\erre=[a]$.

 Let $y'\in A^\erre$ be a point such that 
$$[a]y'=\phi'(p).$$
Define
$$y=i_\erre(y').$$
Then 
\begin{equation}
\label{riccio}
  \phi(\punto)=  [a]\punto'=\phi'(p)
\end{equation}
Let $$(x,p) \in (V_\kzero\times p) \cap
\left(B_{\tilde{\phi}} +\mathcal{O}_{\varepsilon/| \phi|}\right) .$$
 Then, there exists  $\xi \in \mathcal{O}_{\varepsilon/|\phi| }$ such that  $$\tilde\phi((x,p)+\xi)=0.$$ 
Equivalently $$  \phi(x)+\phi'(p)+\tilde\phi(\xi)=0.$$
By relation (\ref{riccio}) we deduce
$$  \phi(x+y)+\tilde\phi(\xi)=0.$$ 

 Let $\xi''\in A^\erre$  be a point such that $$  [a]\xi''=\tilde\phi(\xi).$$
Define
 $\xi'=i_\erre(\xi'')$, then
$$   \phi(\xi')=  [a]\xi''=\tilde\phi(\xi),$$
and $$   \phi(x+\punto+\xi')=0.$$
Since $\tilde\phi$ is special $| \tilde\phi|\ll  | \phi|$. Furthermore $||\xi||\le \frac{\varepsilon}{| \phi| }$. We deduce
$$||\xi'||=||\xi''||=\frac{||\tilde\phi(\xi)||}{  | \phi|}\ll
\frac{\varepsilon}{| \phi| }.$$

 In conclusion  $$(x+\punto ) \in (V_\kzero+\punto )\cap \left(B_{  \phi}+\mathcal{O}_{\varepsilon'/| \phi| }\right) .$$

  \end{proof}

\begin{cor}
\label{fine} Assume Conjecture \ref{bofu1}. Let $V$ be transverse in $A^\g$ and let $p\in A^\esse$ be a point of rank $\esse$. Suppose that $V$ has finite stabilizer. Then,
there exists an effective $\equattro>0$ such that for $\varepsilon \le\equattro$, for all   special morphisms $\tilde{\phi}=(\phi|\phi')$   of codimension  at least $d+1$
 the set 
$$(V_\kzero \times p) \cap \left(B_{\tilde{\phi}}
  +\mathcal{O}_{\varepsilon/| \phi| }\right) $$ is non Zariski-dense in $V\times p$.
\end{cor}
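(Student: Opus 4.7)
The plan is to combine Proposition \ref{includo} with Theorem \ref{finito}: the former reduces a diophantine intersection inside $V\times p$ to an analogous intersection inside a translate $V+\punto$ of $V$, while the latter handles precisely that reduced problem.

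Before invoking either result, I would check that the hypotheses transfer from $\tilde\phi=(\phi|\phi')$ to its first block $\phi:A^\g\to A^\erre$. Since $\tilde\phi$ is special, $\phi$ is weighted by definition. Both $\tilde\phi$ and $\phi$ surject onto the same target $A^\erre$, so their codimensions coincide with the common value $\sum_i d_i r_i$; in particular $\cod \phi\ge d+1$ whenever $\cod \tilde\phi\ge d+1$, so $\phi$ meets all the assumptions of Theorem \ref{finito}. The transversality of $V$ and the finiteness of its stabilizer are given hypotheses of the corollary itself.

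Next I would apply Proposition \ref{includo} to produce a point $\punto\in i_\erre(A^\erre)$ and an absolute constant $c$ (the implicit constant in $\ll$, depending only on the endomorphism data of $A^\g$) such that the map $(x,p)\mapsto x+\punto$ defines an injection
\[
(V_{\kzero}\times p)\cap \bigl(B_{\tilde\phi}+\mathcal{O}_{\varepsilon/|\phi|}\bigr)\ \hookrightarrow\ (V_{\kzero}+\punto)\cap \bigl(B_\phi+\mathcal{O}_{c\varepsilon/|\phi|}\bigr).
\]
This map is the restriction of the isomorphism $V\times p\to V+\punto$, $(x,p)\mapsto x+\punto$, so Zariski-density is preserved under the correspondence.

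Finally I would set $\equattro:=\ecinque/c$, where $\ecinque$ is the effective constant provided by Theorem \ref{finito}. For $\varepsilon\le \equattro$, the right-hand side of the injection takes the form $(V_{\kzero}+\punto)\cap (B_\phi+\mathcal{O}_{\varepsilon'/|\phi|})$ with $\varepsilon'\le\ecinque$, which by Theorem \ref{finito} is non Zariski-dense in $V+\punto$. Pulling back through the isomorphism $V+\punto\cong V\times p$ yields the desired non Zariski-density of the original set in $V\times p$. There is no real obstacle here: the corollary is a packaging of the two preceding results, the only book-keeping being to check that the codimension condition survives the passage $\tilde\phi\mapsto\phi$ and to absorb the constant from Proposition \ref{includo} into the choice of $\equattro$.
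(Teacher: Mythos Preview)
Your proof is correct and follows exactly the approach the paper takes: the paper's own proof merely states ``This is an immediate consequence of Theorem \ref{finito} and Proposition \ref{includo},'' and you have filled in precisely those details, including the observation that $\cod\tilde\phi=\cod\phi$ and the absorption of the implicit constant from Proposition \ref{includo} into $\equattro$.
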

\begin{proof}
This is an immediate consequence of Theorem  \ref{finito} and Proposition \ref{includo}.

\end{proof}

\section{Reduction of the Problem}

\subsection{ Geometry of Numbers}

Let us recall and extend some properties of geomentry of numbers.

\begin{propo}
\label{ret}

Let  $p=(p_1, \dots, p_s)$ be a point of
$A^s_0$ of rank $s$.  
There exist  positive effective constants $c(p,\tau^0)$   and  $ \varepsilon_0(p,\tau^0)$ such that 
$$c(p,\tau^0)\sum_i|b_i|^2||p_i||^2 \le ||\sum_ib_i(p_i-\xi_i)||^2$$
for all  $b_1,\dots,b_s \in \rend_0$  and  for all
$\xi_1,\dots, \xi_s  \in \mathcal{O}_{ \varepsilon_0(p,\tau^0)}$.

\end{propo}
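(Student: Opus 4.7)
The plan is to reduce the asserted inequality to a positivity statement for a quadratic form on a finite-dimensional subspace of $A_0 \otimes \mathbb{R}$, and then absorb the perturbation by a triangle-inequality argument.

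Since $p$ has rank $s$, the points $p_1, \dots, p_s$ are $\rend_0$-linearly independent modulo torsion. Expanding each $b_i \in \rend_0$ on the integral basis as $b_i = \sum_j \beta_{ij} \tau^0_j$ with $\beta_{ij} \in \mathbb{Z}$, the sum becomes $\sum_i b_i p_i = \sum_{i,j} \beta_{ij} \tau^0_j(p_i)$. The $s t_0$ vectors $\tau^0_j(p_i)$ are $\mathbb{Z}$-linearly independent modulo torsion in $A_0 \otimes \mathbb{R}$, as a nontrivial integer relation would yield a nontrivial $\rend_0$-relation among the $p_i$. Because $A_0(\overline{\mathbb{Q}}) \otimes_\mathbb{Z} \mathbb{R}$ coincides with the scalar extension to $\mathbb{R}$ of $A_0(\overline{\mathbb{Q}}) \otimes \mathbb{Q}$, this $\mathbb{Z}$-linear independence upgrades to $\mathbb{R}$-linear independence in $A_0 \otimes \mathbb{R}$.

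On the $(s t_0)$-dimensional subspace $W \subset A_0 \otimes \mathbb{R}$ spanned by the $\tau^0_j(p_i)$, the form $||\cdot||^2$ is positive definite, so by compactness of the coefficient unit sphere there is a constant $c_1(p,\tau^0) > 0$ with
$$
\left\| \sum_{i,j} \beta_{ij}\, \tau^0_j(p_i) \right\|^2 \ge c_1(p,\tau^0) \sum_{i,j} \beta_{ij}^2
$$
for every real choice of $\beta_{ij}$. Lemma~\ref{zero} yields $\sum_j \beta_{ij}^2 \gg |b_i|^2$ with implicit constants depending only on $\tau^0$ and $t_0$. Estimating $\sum_i |b_i|^2\, ||p_i||^2 \le (\max_i ||p_i||^2) \sum_i |b_i|^2$ and absorbing $1/\max_i ||p_i||^2$ into the constant produces $c_2(p,\tau^0) > 0$ such that
$$
c_2(p,\tau^0) \sum_i |b_i|^2\, ||p_i||^2 \le \left\| \sum_i b_i p_i \right\|^2.
$$

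For the perturbation, the triangle inequality together with the operator-type bound $||b_i \xi_i|| \ll |b_i|\, ||\xi_i||$ (constant depending on $\tau^0$) gives
$$
\left\| \sum_i b_i(p_i - \xi_i) \right\| \ge \left\| \sum_i b_i p_i \right\| - C(\tau^0)\, \varepsilon_0 \sum_i |b_i|
$$
whenever $||\xi_i|| \le \varepsilon_0$. By Cauchy--Schwarz and the previous paragraph, the subtracted term is at most $C'(p,\tau^0)\,\varepsilon_0 \cdot ||\sum_i b_i p_i||$, so choosing $\varepsilon_0(p,\tau^0)$ small enough that $C'(p,\tau^0)\varepsilon_0 \le 1/2$ halves $||\sum_i b_i p_i||$ after subtraction, and squaring delivers the proposition with, for instance, $c(p,\tau^0) = c_2(p,\tau^0)/4$. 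I expect the only genuinely delicate point to be the upgrade from $\mathbb{Z}$- to $\mathbb{R}$-linear independence; the rest is routine bookkeeping.
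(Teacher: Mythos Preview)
Your proof is correct and follows essentially the same route as the paper's, which simply sets up the hermitian $\rend_0$-module framework (noting $||b_ip_i||=|b_i|\,||p_i||$) and then defers to the elliptic-curve case \cite{io} Proposition~3.3. Your argument is a self-contained unpacking of that reference: expand on the integral basis $\tau^0$, use positive definiteness of the N\'eron--Tate form on the finite-dimensional span, and absorb the perturbation by the triangle inequality --- exactly what the referenced proof does in the elliptic setting.
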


\begin{proof}
The Rosati involution defines a norm on $\rend_0$ which is compatible
with the norm on the abelian variety. Namely
$||b_ip_i||=|b_i|_{\rend_0}||p_i||$. Thus $(\rend_0, |\cdot |)$ is a
hermitian free $\mathbb{Z}$-module of rank $t$ and $(A,||\cdot||)$ is a hermitian $\rend_0$-module.

The proof is then the anologue of the proof of \cite{io} Proposition 3.3  (this proposition is written at the end of the article, after the references), where one shall read $A_0$ instead of $E$ and consider $b=0$.
\end{proof}

\begin{cor}
\label{corret}
Let  $p\in A^s_0$ be  a points of rank $s$.  
Then, there exist  positive effective constants $c(p)$   and  $ \varepsilon_0(p)$ such that, 
for all $\xi \in\mathcal{O}_{ \varepsilon_0(p)}$,
 for all $\phi_0: A_0^s \to A_0$, 
$$ c(p)|\phi_0|\ll ||\phi_0(p-\xi)||.$$
\end{cor}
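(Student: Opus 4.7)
The plan is to apply Proposition \ref{ret} directly after expressing $\phi_0$ in coordinates. Since $A_0^s$ has a single simple factor, a morphism $\phi_0:A_0^s\to A_0$ is just a row $(b_1,\dots,b_s)\in \rend_0^s$ acting by $\phi_0(x_1,\dots,x_s)=\sum_i b_i x_i$, and by the definition of the norm of a morphism given in the paper, $|\phi_0|=\max_i|b_i|$. Similarly, writing $\xi=(\xi_1,\dots,\xi_s)$, the hypothesis $\xi\in\mathcal{O}_{\varepsilon_0(p)}$ translates, via the max-height on products, into each component satisfying $\xi_i\in\mathcal{O}_{\varepsilon_0(p)}$.

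I would then set $\varepsilon_0(p):=\varepsilon_0(p,\tau^0)$, where the right-hand side is the constant provided by Proposition \ref{ret}, and feed the vector $(b_1,\dots,b_s)$ into that proposition to obtain
$$c(p,\tau^0)\sum_i|b_i|^2\,\|p_i\|^2\;\le\;\Big\|\sum_i b_i(p_i-\xi_i)\Big\|^2\;=\;\|\phi_0(p-\xi)\|^2.$$
Because $p$ has rank $s$, every coordinate $p_i$ must be non-torsion: indeed $p_i=\pi_i(p)\in \Gamma_{p,0}$, and if some $p_i$ were torsion the $\rend_0$-module $\Gamma_{p,0}$ generated by $p_1,\dots,p_s$ modulo torsion would have rank strictly less than $s$. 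Hence $m(p):=\min_i\|p_i\|^2>0$, and
$$\sum_i|b_i|^2\,\|p_i\|^2\;\ge\;m(p)\max_i|b_i|^2\;=\;m(p)\,|\phi_0|^2.$$

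Combining the two inequalities and taking square roots, with $c(p):=\sqrt{c(p,\tau^0)\,m(p)}$, gives $c(p)\,|\phi_0|\le \|\phi_0(p-\xi)\|$, which is precisely the claimed bound (the implicit multiplicative constant hidden in $\ll$ is harmless and may be absorbed into $c(p)$). There is essentially no obstacle in this argument: the corollary is a direct repackaging of Proposition \ref{ret}, the only observations to make being that for $\phi_0:A_0^s\to A_0$ the norm $|\phi_0|$ coincides with the $\ell^\infty$-norm of its coefficient row, and that the rank-$s$ hypothesis forces each $\|p_i\|$ to be strictly positive.
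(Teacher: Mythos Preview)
Your proof is correct and follows exactly the same route as the paper: identify $\phi_0$ with its coefficient row $(b_1,\dots,b_s)$, apply Proposition \ref{ret}, and bound $\sum_i|b_i|^2\|p_i\|^2$ below by $|\phi_0|^2\min_i\|p_i\|^2$, taking $c(p)=c(p,\tau^0)^{1/2}\min_i\|p_i\|$ and $\varepsilon_0(p)=\varepsilon_0(p,\tau^0)$. You add a short justification that $\min_i\|p_i\|>0$ follows from the rank-$s$ hypothesis, which the paper leaves implicit.
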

\begin{proof}
Apply Proposition \ref{ret}, with $(b_1,\dots , b_s)=\phi_0$. 
Note that $\sum_i|b_i|^2||p_i||^2\ge | \phi|^2\min_i||p_i||^2$. Then, the corollary is proven for  $c(p)=c(p,\tau^0)^{\frac{1}{2}}\min_i||p_i||$ and $\varepsilon_0(p)=\varepsilon_0(p,\tau^0)$.
\end{proof}

We now extend this corollary to an abelian variety in general.

\begin{cor}
\label{cor3} 
Let $p \in A^\esse$ be a point of rank $\esse$ and let $\phi:A^\esse \to A^{\underline{1}}$.
Then, there exist positive constants $c(p)$ and $\varepsilon_0(p)$ such that 
$$c(p) | \phi|\ll ||
\phi(p-\xi)||,$$
for all $\xi \in \mathcal{O}_{\varepsilon_0(p)}$ .

\end{cor}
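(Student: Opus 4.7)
\medskip

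\noindent\textbf{Plan of proof.} The statement is the product-variety analogue of Corollary \ref{corret}, and the strategy is to reduce to that simple-factor case using the block decomposition of morphisms between products of non-isogenous simple abelian varieties.

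\medskip

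\noindent\emph{Step 1: Decompose $\phi$ into blocks.} Since the simple factors $A_1,\dots,A_n$ are pairwise non-isogenous, any morphism $\phi:A^{\esse}\to A^{\underline{1}}=A_1\times\dots\times A_n$ splits as $\phi=[\phi_1,\dots,\phi_n]$ with $\phi_i:A_i^{s_i}\to A_i$, and moreover $|\phi|=\max_i|\phi_i|$ (this is the block-diagonal observation recorded in the ``norm of a morphism'' subsection). Write $p=(p^1,\dots,p^n)$ with $p^i\in A_i^{s_i}$, and correspondingly $\xi=(\xi^1,\dots,\xi^n)$; by definition of $p$ having rank $\esse$, each $p^i$ has rank $s_i$ in $A_i^{s_i}$.

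\medskip

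\noindent\emph{Step 2: Apply Corollary \ref{corret} factor-by-factor.} For every $i$, Corollary \ref{corret} supplies positive effective constants $c(p^i)$ and $\varepsilon_0(p^i)$ such that
$$c(p^i)\,|\phi_i|\ll \|\phi_i(p^i-\xi^i)\|$$
whenever $\xi^i\in\mathcal{O}_{\varepsilon_0(p^i)}\subset A_i^{s_i}$. Set
$$c(p)=\min_i c(p^i),\qquad \varepsilon_0(p)=\min_i \varepsilon_0(p^i).$$
Because the height on $A^\esse$ is the maximum of the heights on the factors, any $\xi\in\mathcal{O}_{\varepsilon_0(p)}$ has every component $\xi^i$ lying in $\mathcal{O}_{\varepsilon_0(p^i)}$, so the bound above applies simultaneously for all $i$.

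\medskip

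\noindent\emph{Step 3: Pick the dominant block.} Choose $i_0$ with $|\phi|=|\phi_{i_0}|$. Again using the max-height,
$$\|\phi(p-\xi)\|=\max_i\|\phi_i(p^i-\xi^i)\|\ge \|\phi_{i_0}(p^{i_0}-\xi^{i_0})\|\gg c(p^{i_0})\,|\phi_{i_0}|\ge c(p)\,|\phi|,$$
which is precisely the claimed inequality.

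\medskip

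\noindent\emph{Remark on difficulty.} No step is really an obstacle here: the heart of the estimate is already carried out in Proposition \ref{ret}/Corollary \ref{corret}, and what remains is purely formal, relying on two structural facts specific to the chosen setup — the block-diagonal form of $\phi$ (forced by non-isogeny of the $A_i$) and the max-type normalisations of both $|\cdot|$ and $\|\cdot\|$. The only thing one has to be careful about is that ``rank $\esse$'' for $p$ means rank $s_i$ for each $p^i$ separately, so that Corollary \ref{corret} can indeed be invoked on every factor.
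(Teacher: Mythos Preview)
Your proof is correct and follows essentially the same approach as the paper's own proof: decompose $\phi$ into blocks $\phi_i$, apply Corollary \ref{corret} on each simple factor, and take $c(p)=\min_i c(p^i)$, $\varepsilon_0(p)=\min_i\varepsilon_0(p^i)$. Your Step 3 makes explicit the ``pick the dominant block'' argument that the paper leaves implicit, but the underlying idea is identical.
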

\begin{proof}
 Let  $\phi=[\phi_1,\dots ,\phi_n]$ with $\phi_i:A^{s_i}_i \to A_i$.
Let $p=(p^1,\dots ,p^n)$ with $p^i\in A^{s_i}_i$ of rank $s_i$ and  $\xi=(\xi^1,\dots ,\xi^n)$ with $\xi^i\in A^{s_i}_i$.

Apply Corollary \ref{corret} to each block, with $A_0=A_i$,  $p=p^i$, $s=s_i$, $\phi_0=\phi_i$, $\xi=\xi^i$. Choose  $c(p)$ to be the minimum of $c(p^i)$ and $\varepsilon_0(p)$ to be the minimum of $\varepsilon_0(p^i)$.
\end{proof}

\begin{lem} Given  reals $K_0$, $\varepsilon$ and a subgroup $\Gamma\subset A^\g$ of finite rank, there exists a set of free generators $\gamma=(\gamma_1,\dots, \gamma_s)$ of $\Gamma^{\underline{1}}$  such that,
 for all $\phi: A^\esse \to A^{\underline{1}}$,  it holds
\begin{equation}
\label{gammagrande}
 (K_0+\varepsilon) | \phi|\ll ||\phi(\gamma)||.
\end{equation}

\end{lem}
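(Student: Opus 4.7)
The plan is to prove the lemma by choosing any initial set of free generators $\gamma_0$ of $\Gamma^{\underline{1}}$ and then rescaling it by a sufficiently large integer. The key input is Corollary \ref{cor3}, which, applied with $\xi=0$ to any set of $s$ linearly independent generators $\gamma_0$ of $\Gamma^{\underline{1}}$ (viewing $\gamma_0 \in A^\esse$ as a point of rank $\esse$), yields a positive constant $c(\gamma_0)$ such that
\begin{equation*}
c(\gamma_0)\,|\phi| \ll \|\phi(\gamma_0)\|
\end{equation*}
for every morphism $\phi: A^\esse\to A^{\underline{1}}$.

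Now, given $K_0$ and $\varepsilon$, I would pick a positive integer $N$ with $N c(\gamma_0) \geq K_0 + \varepsilon$ (up to the implied absolute constant coming from $\ll$), and define $\gamma = N\gamma_0$, i.e.\ $\gamma_i = N\gamma_0^i$ for each $i$. First one checks that $\gamma$ is still a set of free generators of $\Gamma^{\underline{1}}$: the $N\gamma_0^i$ are $s$ elements of $\Gamma^{\underline{1}}$, and they remain $\mathcal{E}$-linearly independent modulo torsion because multiplication by $N$ is an isogeny and hence injective on the $\mathbb{Q}$-span of the $\gamma_0^i$ modulo torsion. Then, because the norm $\|\cdot\|$ is the square root of a quadratic form (namely a N\'eron--Tate height), it is homogeneous of degree $1$ with respect to multiplication by integers; in particular $\|\phi(N\gamma_0)\| = \|N\phi(\gamma_0)\| = N\,\|\phi(\gamma_0)\|$. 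Combining this identity with the inequality from Corollary \ref{cor3} gives
\begin{equation*}
\|\phi(\gamma)\| = N\,\|\phi(\gamma_0)\| \gg N\,c(\gamma_0)\,|\phi| \gg (K_0+\varepsilon)\,|\phi|,
\end{equation*}
which is exactly \eqref{gammagrande}.

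There is no real obstacle here: the lemma is essentially a scaling statement on top of Corollary \ref{cor3}, and the only point to verify carefully is that scaling a set of free generators by $N \in \mathbb{N}^*$ preserves the property of being a set of free generators (a matter of $\mathbb{Q}$-linear independence modulo torsion) and that the norm scales linearly (which follows from the quadratic nature of the N\'eron--Tate height). Both facts are standard, so the proof is short.
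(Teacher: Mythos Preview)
Your argument is correct, and it takes a genuinely different route from the paper's proof. The paper does not invoke Corollary \ref{cor3} at all; instead it fixes free generators $z_1,\dots,z_{s_i}$ of each $\Gamma_i$, expands $\Gamma_i\otimes\mathbb{Q}$ as a $\mathbb{Q}$-vector space via the integral generators $\tau^i$ of $\mathcal{E}_i$, and then appeals to Lemma \ref{zero} together with \cite{io} Lemma 3.4 (the corresponding statement for $\mathbb{Z}$-coefficients in the elliptic setting). In other words, the paper reduces the $\mathcal{E}$-linear problem to a $\mathbb{Z}$-linear one and imports the result from the earlier article.

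Your approach is more economical within the present paper: Corollary \ref{cor3} already packages the passage from $\mathcal{E}$-coefficients to a uniform lower bound, so all that remains is the rescaling trick $\gamma=N\gamma_0$, which is transparent once one notes that the norm satisfies $\|[N]x\|=N\|x\|$ and that the paper's notion of ``free generators'' only requires $s$ linearly independent elements of the module (so $N\gamma_0$ qualifies). The paper's route, by contrast, keeps the argument parallel to the elliptic case and avoids relying on the chain Proposition \ref{ret} $\Rightarrow$ Corollary \ref{corret} $\Rightarrow$ Corollary \ref{cor3}. Either way the underlying geometry-of-numbers input traces back to \cite{io}, but your path gets there in one line rather than re-doing the $\mathbb{Z}$-reduction.
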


\begin{proof}
Let $z_1,\dots,z_{s_i}$ be free generators of $\Gamma_i$. Decompose $\Gamma_i\otimes \qe=\qe z_1+\dots \qe \tau_{t_i}z_1+\dots +\qe z_{s_i}+\dots \qe \tau_{t_i}z_{s_i}$. The proof is then an immediate application of Lemma \ref{zero} and  \cite{io}  Lemma 3.4 (this lemma is written at the end of the article, after the references). 

\end{proof}

Given $\Gamma$, We fix, once and for all, a set of free generators $\gamma$ of $\Gamma^{\underline1}$ satisfying   relation (\ref{gammagrande}) for $\varepsilon=K_0$.

  \subsection{Reducing to weighted morphisms}
 Using the  Gauss algorithm we show:
    \begin{lem}
\label{peso1}
Let $\Delta_0\in M_{r_0\times r_0}(\rend_0)$ be a matrix of rank $r_0$. Then, there exists  an integer $a$ and a matrix  $\Delta_0'\in M_{r_0\times r_0}(\rend_0)$ of rank $r_0$ such that $$\Delta_0'\Delta_0=aI_{r_0}.$$
\end{lem}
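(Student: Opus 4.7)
The setup is that, since $A_0$ is a simple abelian variety, $D_0 := \rend_0 \otimes_{\ze} \qe$ is a division algebra over $\qe$, and $\rend_0$ is a $\ze$-order (in particular a $\ze$-lattice of full rank) inside $D_0$. The hypothesis that $\Delta_0 \in M_{r_0 \times r_0}(\rend_0)$ has rank $r_0$ is exactly the statement that $\Delta_0$ is invertible as an element of $M_{r_0 \times r_0}(D_0)$.

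The plan is to first invert $\Delta_0$ over $D_0$, then clear denominators. For the inversion I would perform left Gauss elimination on $\Delta_0$, using only row operations implemented by left multiplications by elementary matrices over $D_0$. This is feasible because $D_0$ is a division ring, so every nonzero pivot is two-sided invertible, which is all that is needed for left row reduction. The algorithm terminates in $r_0$ steps, producing a matrix $M \in M_{r_0 \times r_0}(D_0)$ with $M\Delta_0 = I_{r_0}$; equivalently $M = \Delta_0^{-1}$.

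To clear denominators, I would use that $\rend_0$ is a finitely generated $\ze$-lattice in $D_0 = \qe \rend_0$. Hence for each entry $m_{ij}$ of $M$ there exists $n_{ij} \in \mathbb{N}^*$ with $n_{ij} m_{ij} \in \rend_0$. Taking $a \in \mathbb{N}^*$ to be a common multiple of the (finitely many) $n_{ij}$, I get $\Delta_0' := aM \in M_{r_0 \times r_0}(\rend_0)$. Since $a \in \ze$ is central in $D_0$,
$$\Delta_0' \Delta_0 = (aM)\Delta_0 = a(M \Delta_0) = a I_{r_0},$$
and $\Delta_0'$ still has rank $r_0$, being a nonzero scalar multiple of the invertible matrix $M$.

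The only point worth flagging as an obstacle is non-commutativity: when $D_0$ is a quaternion division algebra (Albert types III, IV), one cannot cancel pivots on the wrong side, so the Gauss step must be consistently one-sided. This restriction is harmless since division rings still admit left row reduction with every nonzero pivot, and the final scalar $a$ is an ordinary integer, so it commutes with $M$ when computing $\Delta_0' \Delta_0 = a I_{r_0}$.
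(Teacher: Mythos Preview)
Your proof is correct and follows the same overall strategy as the paper---left Gauss elimination followed by a scaling step to land back in $\rend_0$---but the execution differs in a way worth noting. The paper never leaves $\rend_0$: it uses the left Ore condition (for nonzero $x,y\in\rend_0$ there exist $a,b\in\rend_0$ with $ax=by$) to row-reduce $\Delta_0$ to a diagonal matrix over $\rend_0$, then multiplies each diagonal entry by its conjugate so that the reduced norm turns the diagonal into integers, and finally takes an LCM. You instead pass to the division algebra $D_0=\rend_0\otimes_{\ze}\qe$, invert $\Delta_0$ outright there, and then clear denominators with a single integer $a$ using that $\rend_0$ is a full $\ze$-lattice in $D_0$. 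Your route is a bit more streamlined (one inversion, one scaling) and makes the role of $D_0$ explicit; the paper's route has the virtue of staying integral throughout and highlights the Ore property as the reason left row reduction works over the order itself. Either way the non-commutativity caveat you flag is exactly the one the paper raises.
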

\begin{proof}

 Note that the $\rend_0$ is not
necessarily commutative, it can be a quaternion, however  given non
zero elements $x,y
\in \rend_0$ there exist $a,b \in \rend_0$ such that $ax=by$. 
This shows that one can operate a Gauss reduction  using only operation on the left and
without commuting elements in $\rend_0$. In other words there exists a matrix $\Delta$ of rank $r_0$ such that $\Delta\Delta_0$ is a diagonal matrix.  Using the norm, we can find a matrix $\Delta'$ of maximal rank $r_0$ such that $\Delta'\Delta\Delta_0=[a_1, \dots, a_{r_0}]$  with $a_i\in \ze^*$.   Let $m$ be the minimum common multiple of $a_1, \dots ,a_{r_0}$.  We define $\Delta'_0= [\frac{m}{|a_1|},\dots , \frac{m}{|a_{r_0}|}]\Delta'\Delta$. 
\end{proof}

This has some immediate consequences.

\begin{lem}
\label{peso}
 Let  $\psi:A^{\g}\to A^{\erre}$ be a surjective morphism. Then, there exists an isogeny $\Delta$ of $A^\erre$ such that $\phi=\Delta \psi$ is a  weighted morphism. As a consequence,
 \begin{enumerate}
 \item $ B_\psi \subset B_\phi+A^{\g}_{\rm{Tor}}.$
 \item For all $\varepsilon\ge0$, $$ \bigcup_{\psi \,\,\,
{\rm{rk}}(\psi)= \erre}  (B_\psi+(\Gamma^\g)_{\varepsilon} )\subset \bigcup_{\substack{\phi\,\,\,\,{\rm{weighted}}\\
{\rm{rk}}(\phi)= \erre}}(B_\phi+(\Gamma^{\g})_{\varepsilon} ).$$
 \end{enumerate}

\end{lem}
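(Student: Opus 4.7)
The plan is to decompose $\psi$ into blocks corresponding to the simple factors of $A$, apply Lemma \ref{peso1} to each block to produce a left inverse up to an integer scalar, and then normalize those scalars to a common one by taking a least common multiple.

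Because the $A_i$ are pairwise non-isogenous simple abelian varieties, any morphism $\psi: A^\g \to A^\erre$ is block-diagonal, $\psi = [\psi_1,\dots,\psi_n]$ with $\psi_i: A_i^{g_i} \to A_i^{r_i}$, and the surjectivity plus rank-$\erre$ hypothesis force each $\psi_i$ to have rank $r_i$ as a matrix in $M_{r_i\times g_i}(\rend_i)$. For each $i$, I select $r_i$ linearly independent columns of $\psi_i$ to form a square submatrix $\Delta_i\in M_{r_i\times r_i}(\rend_i)$ of rank $r_i$, and Lemma \ref{peso1} produces an integer $a_i\in\mathbb{N}^*$ and a rank-$r_i$ matrix $\Delta_i'\in M_{r_i\times r_i}(\rend_i)$ with $\Delta_i'\Delta_i = a_i I_{r_i}$. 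Thus in $\Delta_i'\psi_i$ the $r_i$ columns coming from $\Delta_i$ form $a_i I_{r_i}$, while the remaining $g_i - r_i$ columns have entries in $\rend_i$.

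Next, setting $a := {\rm lcm}(a_1,\dots,a_n)$, $\tilde{\Delta}_i := (a/a_i)\Delta_i'$, and $\Delta := [\tilde{\Delta}_1,\dots,\tilde{\Delta}_n]: A^\erre\to A^\erre$, the morphism $\Delta$ is block-diagonal with each block of full rank $r_i$, hence an isogeny, and $\phi := \Delta\psi$ contains $aI_{r_i}$ in each block, so $aI_r$ is a submatrix of $\phi$ after the column permutation permitted in the definition of weighted; this verifies condition (1). The bound $|\phi|\ll a$ required by condition (2) is checked directly from the construction: the non-identity entries of $\phi$ are the entries of $\tilde{\Delta}_i L_i'$, where $L_i'$ collects the remaining columns of $\psi_i$, and these are controlled by a constant multiple of $a$ via the explicit form of $\Delta_i'$ coming out of Lemma \ref{peso1}.

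Both consequences are then almost immediate. Since $\Delta$ is an isogeny, $\ker\psi\subset\ker(\Delta\psi)=\ker\phi$, giving $B_\psi\subset B_\phi\subset B_\phi+A^\g_{\rm Tor}$ and thus (i). For (ii), the same inclusion yields $B_\psi+(\Gamma^\g)_\varepsilon\subset B_\phi+(\Gamma^\g)_\varepsilon$ for every $\psi$ of rank $\erre$, and since the associated $\phi=\Delta\psi$ is weighted of rank $\erre$, taking unions on both sides gives the claimed containment. The one real point of care is condition (2) of the weighted definition, namely $|\phi|\ll a$: one must verify that the multiplicative constant can be absorbed into the paper's usage of $\ll$, which follows from the explicit control on the output of Lemma \ref{peso1}; everything else is bookkeeping once the block decomposition is combined with the lemma.
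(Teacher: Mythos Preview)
Your overall strategy---block-decompose $\psi$, apply Lemma~\ref{peso1} to a full-rank $r_i\times r_i$ submatrix of each block, and clear denominators with an $\mathrm{lcm}$---is exactly the paper's. The derivation of consequences (i) and (ii) is fine, and more explicit than the paper's.

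The gap is in condition~(2) of the weighted definition, precisely where you flag ``the one real point of care.'' You select \emph{arbitrary} $r_i$ linearly independent columns for $\Delta_i$ and then justify $|\phi|\ll a$ by appealing to ``explicit control on the output of Lemma~\ref{peso1}.'' But Lemma~\ref{peso1} provides no size bound whatsoever on $a_i$ or $\Delta_i'$ in terms of $\Delta_i$; it is a pure existence statement. And with an arbitrary column choice the bound can genuinely fail: over $\rend=\ze$ with $g=3$, $r=2$, take $\psi=\left(\begin{smallmatrix}1&0&N\\0&1&0\end{smallmatrix}\right)$ and choose columns $\{1,2\}$. Then $\Delta=I_2$, $\phi=\psi$, the only scalar $2\times2$ submatrix is $I_2$, so $a=1$ while $|\phi|=N$; no uniform constant absorbs this. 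The paper instead chooses $\Delta_i$ ``with maximal pivots,'' and it is this choice of submatrix---not anything coming out of Lemma~\ref{peso1}---that forces the remaining columns of $\Delta_i'\psi_i$ to have entries $\ll a_i$. In the example, maximal pivoting picks columns $\{3,2\}$, yielding $\Delta=\left(\begin{smallmatrix}1&0\\0&N\end{smallmatrix}\right)$ and $\phi=\left(\begin{smallmatrix}1&0&N\\0&N&0\end{smallmatrix}\right)$, which is weighted with $a=N$. So the fix is to specify the maximal-pivot choice and argue from that, not from Lemma~\ref{peso1}.
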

\begin{proof}

Let $\psi=[\psi_1,\dots,\psi_n]$.  Let $\Delta_i$ be a submatrix of $\psi_i$ of rank $r_i$ with maximal pivots. By Lemma \ref{peso1} applied to each $\Delta_i$, there exist $\Delta'_i$ such that $\Delta_i'\Delta_i=a_iI_{r_i}$ with  $a_i \in \mathbb{N}^*$. Let $m$ be the mimum common multiple of the $a_i$. Define $$\Delta=\left[\frac{m}{a_1}I_{r_1}, \dots ,\frac{m}{a_n}I_{r_n}\right][\Delta_1',\dots, \Delta_n'].$$
\end{proof}

\subsection{Reducing to special morphisms}

We prove here an important inclusion.
\begin{propo} 
\label{speciali}

To every weighted morphism  $\phi:A^{\g} \to
A^{\erre}$ we can associate a special morphism $
\tilde\phi=( \phi|\phi'):A^{\g+\esse}\to A^{\erre}$ such that, for all $0\le \varepsilon\le K_0$,  the map $x \to (x,\gamma)$ defines  an injection 
$$\big(V_{K_0}\cap (B_\phi+\left(\Gamma^{\g}\right)_{\varepsilon})\big)
\hookrightarrow \left(
(V_{K_0}\times \gamma) \cap (B_{\tilde{\phi}} +\mathcal{O}_{\varepsilon})\right).$$

\end{propo}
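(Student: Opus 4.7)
The plan is to read off the missing block $\phi'$ directly from the $\Gamma$-part of a decomposition of $x$, and then verify the two defining properties of a \emph{special} morphism using the lower bound (\ref{gammagrande}) for our fixed $\gamma$. Writing $\phi = [\phi_1,\dots,\phi_n]$, fix $x \in V_{K_0}\cap (B_\phi+(\Gamma^\g)_\varepsilon)$ and a decomposition $x = b + g_0 + \xi$ with $b\in B_\phi$, $g_0=(g_0^1,\dots,g_0^n)\in \Gamma_1^{g_1}\times\cdots\times\Gamma_n^{g_n}$, and $\|\xi\|\le \varepsilon$. Since $\gamma^i = (\gamma^i_1,\dots,\gamma^i_{s_i})$ is a set of \emph{free} $\rend_i$-generators of the saturated module $\Gamma_i$, each coordinate of $g_0^i$ is a unique $\rend_i$-linear combination of $\gamma^i_1,\dots,\gamma^i_{s_i}$, so there is a unique $g_i\times s_i$ matrix $N^i$ over $\rend_i$ with $g_0^i = N^i\gamma^i$. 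I then set
$$\phi' := [-\phi_1 N^1,\dots,-\phi_n N^n]\colon A^\esse \longrightarrow A^\erre,\qquad \tilde\phi := (\phi\,|\,\phi'),$$
for which by construction $\phi'(\gamma) = -\phi(g_0)$.

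Next I verify that $\tilde\phi$ is special. The weighted condition on $\phi$ is given, so it remains to prove $|\phi'|\ll|\phi|$. Since $\phi(b)=0$,
$$\|\phi'(\gamma)\| = \|\phi(g_0)\| = \|\phi(x-\xi)\| \le |\phi|(\|x\|+\|\xi\|) \le |\phi|(K_0+\varepsilon) \le 2|\phi|K_0.$$
On the other hand, picking for each block $i$ a row $\rho_i$ of $\phi'_i$ achieving $|\rho_i|=|\phi'_i|$, the concatenation $\rho=[\rho_1,\dots,\rho_n]\colon A^\esse\to A^{\underline 1}$ satisfies $|\rho|=|\phi'|$ and $\|\rho(\gamma)\|\le \|\phi'(\gamma)\|$. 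The inequality (\ref{gammagrande}), fixed once and for all with $\varepsilon=K_0$ for our chosen $\gamma$, applied to $\rho$ gives
$$2K_0\cdot |\phi'| = 2K_0\cdot|\rho| \ll \|\rho(\gamma)\| \le \|\phi'(\gamma)\| \le 2|\phi|K_0,$$
hence $|\phi'|\ll|\phi|$, and $\tilde\phi$ is special.

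It remains to establish the set inclusion. From $\phi(b)=0$ and $\phi'(\gamma) = -\phi(g_0)$,
$$\tilde\phi(x-\xi,\gamma) = \phi(b+g_0)+\phi'(\gamma) = \phi(g_0)-\phi(g_0) = 0,$$
so $(x-\xi,\gamma)\in B_{\tilde\phi}$. Then $(x,\gamma)=(x-\xi,\gamma)+(\xi,0)\in B_{\tilde\phi}+\mathcal{O}_\varepsilon$, while $(x,\gamma)\in V_{K_0}\times\gamma$ is immediate. Injectivity of the map $x\mapsto(x,\gamma)$ is trivial.

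The main obstacle is the norm control $|\phi'|\ll|\phi|$: the naive estimate $|\phi_i N^i|\le|\phi_i|\,|N^i|$ is useless, because $|N^i|$ (equivalently $\|g_0\|$) is not controlled by the data --- only $\|\phi(g_0)\|$ is, via the fact that $\phi$ kills $b$. The decisive trick is to bypass $|N^i|$ entirely and use (\ref{gammagrande}) as a one-sided inequality that converts the controlled quantity $\|\phi'(\gamma)\|$ into the required bound on $|\phi'|$.
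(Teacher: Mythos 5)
Your overall strategy (build $\phi'$ so that $\phi'(\gamma)$ cancels $\phi$ of the $\Gamma$-part, then get $|\phi'|\ll|\phi|$ by applying (\ref{gammagrande}) to a row of maximal norm, using that $\|\phi(g_0)\|=\|\phi(x-\xi)\|\ll|\phi|(K_0+\varepsilon)$) is exactly the mechanism of the paper's proof. But there is a genuine gap at the very first step: you assert that each coordinate of $g_0^i\in\Gamma_i^{g_i}$ is an $\rend_i$-linear combination of $\gamma^i_1,\dots,\gamma^i_{s_i}$, so that $g_0^i=N^i\gamma^i$ with $N^i$ over $\rend_i$. In this paper a ``set of free generators'' of $\Gamma^{\underline 1}$ is only a maximal $\rend$-linearly independent subset of the \emph{saturated} module, not a generating set: $\Gamma_i$ contains all division points of $\Gamma$ and indeed all torsion points of $A_i$, and such elements are in general not in the $\rend_i$-span of the $\gamma^i_j$ (for instance a point $z$ with $2z=\gamma^i_1$, or a nonzero torsion point, already defeats your matrix $N^i$). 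So the morphism $\phi'$ you need does not exist as written, and with it the identity $\phi'(\gamma)=-\phi(g_0)$ on which everything rests.

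The repair is the one the paper uses: since the $\gamma^i_j$ are a maximal linearly independent set, there is a positive integer $N$ (depending on $g_0$) and a morphism $G:A^{\esse}\to A^{\g}$ with entries in $\rend$ such that $Ng_0=G\gamma$; one then takes $\tilde\phi=(N\phi\,|\,\phi')$ with $\phi'=-\phi G$, whose first block $N\phi$ is still weighted. Your norm argument then goes through essentially verbatim, giving $\|\phi'(\gamma)\|=\|N\phi(g_0)\|\ll N|\phi|(K_0+\varepsilon)$ and hence, via (\ref{gammagrande}) with your row trick, $|\phi'|\ll N|\phi|$, so $\tilde\phi$ is special; the membership $(x,\gamma)\in B_{\tilde\phi}+\mathcal{O}_{\varepsilon}$ follows as in your last paragraph. (Note also that, exactly as in the paper's own proof, the resulting $\tilde\phi$ depends on the point $x$ through $N$ and $G$, which is all that is needed for the application, where one takes the union over all special morphisms with the given weighted block.)
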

\begin{proof}

Recall that $\gamma$ is a set of free generators of $\Gamma^{\underline{1}}$ (see definition (\ref{basegamma})), it has rank $\esse$ and it satisfies relation (\ref{gammagrande}).

Let $x \in V_{K_0}\cap (B_\phi+\left(\Gamma^{\g}\right)_{\varepsilon})$. Then, there exist  points $y\in \Gamma^\g$ 
and $\xi \in \mathcal{O}_{\varepsilon,A^\g}$ such that 
$$\phi(x+y+\xi)=0.$$
As $\gamma$ is a set of free generators,
 there exist  a positive integer $N$ and a morphism
$G:A^\g \to A^\erre$ such that
$$Ny=G\gamma.$$
We define $$\tilde\phi=(N\phi|\phi G).$$

Then
\begin{equation}
\label{tr1}\tilde\phi((x,\gamma)+(\xi,0))=0.
\end{equation}

To prove that  $\tilde\phi$ is special, we shall show that $| \tilde\phi|\ll N| \phi|$, as  we already know that $\phi$ is weighted.
Equivalently, we are going to   show that $|\phi'|\ll N | \tilde\phi|$. Let $l$ and $j$ be indices such  that $|\phi'|=|\phi'_{lj}|$.
Consider the $l$ row of the equation (\ref{tr1}). For $\varphi_l$ and $\varphi'_l$ the $l$-th rows of $\phi$ and $\phi'$ respectively, we have
\begin{equation*}
||N \varphi_l(x+\xi)|| = ||\varphi'_l(\gamma)||.
\end{equation*}

Then 
$$ | N\phi|(||x||+||\xi||)\gg || \varphi_l(x+\xi)|| = ||\varphi'_l(\gamma)||.$$

  By assumption $||{{x}}||\le K_0$ and $||\xi||\le \varepsilon$. So
  $$N | \phi|(K_0+\varepsilon)\gg  ||\varphi'_l(\gamma)||.$$
  
In view of relation (\ref{gammagrande}), we deduce
\begin{equation*}
N | \phi|(K_0+\varepsilon)\gg (K_0+\varepsilon)|\varphi'_l|.
  \end{equation*}

Thus
$$|\varphi'_l|=|\phi'|\ll N | \phi|.$$

\end{proof}

\subsection{Reducing to finite stabilizers}
In the following lemma, we see  that to prove Theorem \ref{main} it is enough to prove it for varieties with finite stabilizer. Recall that $\stabv$ is an algebraic subgroup of $A^\g$.
\begin{lem}
\label{stabi}
\begin{enumerate}

\item
Let $X=X_1\times A^\erre$ be a subvariety of $A^\g$ of dimension d. Then, for $k\ge r$,  $$S_{k}(X,F)\hookrightarrow S_{k-r}(X_1,F')\times A^\erre$$
where $F'$ is the projection of $F$ on $A^{\g-\erre}$.

\item
Let $V$ be a  (weak)-transverse subvariety of   $A^{\g}$.
Suppose that $\stabv $ is isogenous to $A^\erre$ with $\sum r_i\ge1$.
Then, there exists an isogeny $j$ of $A^\g$ such that 
$$j(V) =V_1\times A^{\erre}$$ with $V_1$ (weak)-transverse in $A^{\g - \erre}$ and $\stabv_1$ a finite group.

\item Conjecture \ref{nd} holds if and only if it holds for  varieties with finite stabilizer.
\end{enumerate}

\end{lem}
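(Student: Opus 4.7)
My plan is to tackle the three parts in order; (iii) will combine (i) and (ii).

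For part (i), I apply Lemma \ref{masserw}. A point $(x_1,x_2) \in X = X_1 \times A^\erre$ lies in $S_k(X,F)$ iff there exist an algebraic subgroup $B \subset A^\g$ of codimension at least $k$ and an $f \in F$ with $(x_1,x_2) \in B+f$. Lemma \ref{masserw} splits $B = B_1 \times B_2 \subset A^{\g-\erre} \times A^\erre$, with the total codimension equal to the sum of the two. The codimension of $B_2$ in $A^\erre$ is at most $\dim A^\erre = r$ (reading ``$r$'' in the statement as $\sum d_i r_i$), so the codimension of $B_1$ in $A^{\g-\erre}$ is at least $k-r$. Projecting $(x_1,x_2)=b+f$ onto $A^{\g-\erre}$ yields $x_1 \in B_1 + F'$, hence $x_1 \in S_{k-r}(X_1,F')$. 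The identity map thus realises the required injection.

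For part (ii), Poincar\'e reducibility provides an abelian subvariety $H$ complementary to the connected component $(\stabv)^0$ in $A^\g$, so that $(\stabv)^0 \times H \to A^\g$ is an isogeny; by the decomposition theorem $H$ is isogenous to $A^{\g-\erre}$. Composing inverses and reordering factors produces an isogeny $j: A^\g \to A^\g$ (viewed as $A^{\g-\erre} \times A^\erre$) that sends $(\stabv)^0$ onto the second factor. Then $j(V)$ is translation-invariant under all of $A^\erre$, hence is a pull-back from $A^{\g-\erre}$ and splits as $V_1 \times A^\erre$, with $V_1$ the projection onto the first factor. The (weak-)transversality of $V_1$ follows by contradiction: a proper translate (respectively, a proper algebraic subgroup) of $A^{\g-\erre}$ containing $V_1$ would, after crossing with $A^\erre$ and pulling back through $j$, give the same for $V$ in $A^\g$. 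Finally, $\stab V_1$ must be finite: otherwise $\stab(V_1 \times A^\erre)$ would contain $(\stab V_1) \times A^\erre$, strictly exceeding dimension $\dim A^\erre$, contradicting $\dim \stab j(V) = \dim \stabv = \dim A^\erre$ since isogenies preserve stabilizer dimensions.

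For part (iii), only the direction ``finite-stabilizer case implies general case'' is substantive. Given any (weak-)transverse $V$, apply (ii) to produce $j(V) = V_1 \times A^\erre$ with $V_1$ of finite stabilizer. Since Conjecture \ref{nd} is invariant under isogeny of the ambient variety, it suffices to verify it for $j(V)$. Taking $k = d+1$ in part (i), and noting that $k - r = d+1 - \dim A^\erre = \dim V_1 + 1$, the set $S_{d+1}(j(V), \Gamma_\varepsilon)$ embeds in $S_{\dim V_1 + 1}(V_1, \Gamma'_\varepsilon) \times A^\erre$, which is non Zariski-dense in $V_1 \times A^\erre = j(V)$ by the finite-stabilizer case of Conjecture \ref{nd} applied to $V_1$. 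The reverse implication is immediate, as varieties with finite stabilizer form a special case.

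The step I expect to cost the most care is the product splitting $j(V) = V_1 \times A^\erre$ in (ii): one must ensure that $j$ identifies $(\stabv)^0$ with the \emph{full} factor $A^\erre$, not merely an isogenous copy, and then extract the splitting from translation invariance. Once this is done, the remainder reduces to routine applications of Lemma \ref{masserw} together with the isogeny-invariance of the non-density statements.
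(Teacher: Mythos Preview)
Your proposal is correct and follows the same approach as the paper, which simply defers to the analogue \cite{irmn} Lemma~4.1; you have supplied the details that reference stands in for. Two minor remarks: in part~(iii) you should carry the height bound along, i.e.\ work with $S_{d+1}(j(V)_{K_0},\Gamma_\varepsilon)$ rather than $S_{d+1}(j(V),\Gamma_\varepsilon)$, and note that under the max-norm splitting $(V_1\times A^\erre)_{K_0}$ projects into $(V_1)_{K_0}$, so part~(i) still applies; and your reading of ``$r$'' as $\dim A^\erre=\sum_i d_i r_i$ is indeed the intended one here, a leftover from the elliptic setting where the two coincide.
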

The proof is the analogue of  \cite{irmn} Lemma 4.1  (this lemma is written at the end of the article, after the references).

\subsection{Reducing to Conjecture \ref{nd} ii}
In this section we are going to  prove Theorem \ref{equii}.
In the first instance, we study some properties of a morphism vanishing on a point of large rank.
\label{equivalenza}

\begin{lem}
\label{lemuccio}
Let $p\in A^\esse$ be a point of rank $\esse$.
Let $\tilde\phi=(\phi|\phi'):A^{\g+\esse}\to A^\erre$ be a surjective morphism. 
Let $\varepsilon\le \varepsilon_0(p)$ where $\varepsilon_0(p)$ is defined as in Corollary \ref{cor3}.

If there exists a point $(x, p) \in B_{\tilde\phi}+\mathcal{O}_\varepsilon$ then
\begin{enumerate}
\item $\phi$ has rank $\erre$,
\item There exists $\tilde\psi=(\psi|\psi'):A^{\g+\esse}\to A^\erre$ with $\psi$ weighted  such that $$B_{\tilde\phi}\subset B_{\tilde\psi}+ {\rm{Tor}}_{A^\g}.$$
\end{enumerate}
\end{lem}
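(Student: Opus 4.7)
The plan is to decompose everything block by block via Lemma \ref{masserw}. Write $\tilde\phi = [\tilde\phi_1,\dots,\tilde\phi_n]$ with $\tilde\phi_i = (\phi_i \mid \phi'_i) : A_i^{g_i+s_i} \to A_i^{r_i}$, and split the perturbation $\xi \in \mathcal{O}_\varepsilon$ as $\xi = (\xi_1,\xi_2) \in A^\g \times A^\esse$ with $\xi_j = (\xi_j^1,\dots,\xi_j^n)$. The hypothesis $\tilde\phi((x,p)+\xi)=0$ then reads, block-wise,
\begin{equation*}
\phi_i(x^i+\xi_1^i) + \phi'_i(p^i+\xi_2^i) = 0 \qquad \text{in } A_i^{r_i}.
\end{equation*}

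For part i., I argue by contradiction. Suppose that $\phi_{i_0}$ fails to have rank $r_{i_0}$ for some $i_0$, i.e.\ is not surjective. Its image $\phi_{i_0}(A_{i_0}^{g_{i_0}})$ is a proper abelian subvariety of $A_{i_0}^{r_{i_0}}$; since $A_{i_0}$ is simple, Poincar\'e complete reducibility produces a nonzero homomorphism $\lambda : A_{i_0}^{r_{i_0}}\to A_{i_0}$ annihilating this image, so $\lambda \phi_{i_0} = 0$. Applying $\lambda$ to the $i_0$-th block relation kills the $\phi_{i_0}$ term and leaves
\begin{equation*}
\lambda \phi'_{i_0}(p^{i_0}+\xi_2^{i_0}) = 0 \quad \text{in } A_{i_0}.
\end{equation*}
Since $p^{i_0}$ has rank $s_{i_0}$ and $\|\xi_2^{i_0}\| \le \varepsilon \le \varepsilon_0(p) \le \varepsilon_0(p^{i_0})$, Corollary \ref{corret} applied to the morphism $\lambda\phi'_{i_0}:A_{i_0}^{s_{i_0}}\to A_{i_0}$ forces $|\lambda\phi'_{i_0}|=0$, hence $\lambda\phi'_{i_0}=0$. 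Then $\lambda\tilde\phi_{i_0}=0$ with $\lambda \ne 0$, contradicting the surjectivity of $\tilde\phi_{i_0}$ inherited from that of $\tilde\phi$.

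For part ii., part i.\ guarantees that $\phi$ is surjective, so Lemma \ref{peso} supplies an isogeny $\Delta$ of $A^\erre$ such that $\psi := \Delta\phi$ is weighted. Set $\tilde\psi := \Delta\tilde\phi = (\psi \mid \Delta\phi')$; this has $\psi$ weighted as required, and the inclusion $B_{\tilde\phi} = \ker\tilde\phi \subset \ker(\Delta\tilde\phi) = B_{\tilde\psi}$ is immediate, which gives $B_{\tilde\phi} \subset B_{\tilde\psi} + \mathrm{Tor}_{A^\g}$ without even invoking the torsion term.

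The delicate step is part i.: the possibly non-commutative structure of the rings $\rend_i$ precludes a naive matrix-rank argument, but extracting $\lambda$ via Poincar\'e reducibility on the simple factor $A_{i_0}$ circumvents this, and the chain $\varepsilon \le \varepsilon_0(p) \le \varepsilon_0(p^{i_0})$ is exactly what lets Corollary \ref{corret} promote the equality $\lambda\phi'_{i_0}(p^{i_0}+\xi_2^{i_0})=0$ (as a single point in $A_{i_0}$) into the vanishing of $\lambda\phi'_{i_0}$ as a morphism.
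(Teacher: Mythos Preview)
Your proof is correct and follows essentially the same approach as the paper's: both argue part i.\ by contradiction, producing a nonzero left annihilator $\lambda$ of $\phi$, applying it to the relation $\tilde\phi((x,p)+\xi)=0$ to get $\lambda\phi'(p+\xi')=0$, and then invoking the rank-$\esse$ hypothesis on $p$ (via Corollary \ref{corret}/\ref{cor3}) to force $\lambda\phi'=0$, contradicting surjectivity of $\tilde\phi$; part ii.\ is likewise handled in both by applying Lemma \ref{peso} to $\phi$ and setting $\tilde\psi=\Delta\tilde\phi$. The only cosmetic difference is that you isolate a single bad block $i_0$ and obtain $\lambda$ via Poincar\'e reducibility, whereas the paper works with a global block-diagonal $\lambda=[\lambda^1,\dots,\lambda^n]$ and appeals directly to Corollary \ref{cor3}.
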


\begin{proof}
i- Suppose that the rank of $\phi$ is less than $\erre$. Then, there exists $\lambda=[\lambda^1,\dots, \lambda^n]$ with $\lambda^i\in \rend_i^{r_i}$ such that 
$$\lambda \phi=0.$$
Let $(x,p)\in  B_{\tilde\phi}+\mathcal{O}_\varepsilon$. Then, there exists $(\xi,\xi') \in \mathcal{O}_\varepsilon$ such that
\begin{equation}
\label{gigio}
\tilde\phi\left((x,p)+(\xi,\xi')\right)=0.
\end{equation}
So
$$\lambda\phi'(p+\xi')=-\lambda \phi(x+\xi)=0.$$
 Corollary \ref{cor3}, applied with $\phi=\lambda \phi'$ and $\xi=\xi'$ implies that $p-\xi$ has rank $\esse$, whence  $\lambda\phi'=0$.
 So $\lambda \tilde\phi=0$. This contradicts that $\tilde\phi$ has full  rank $\erre$.

ii- By part i we can assume that rank $\phi$ is $\erre$.  By Lemma \ref{peso} applied to $\phi$, there exists an invertible $\Delta$ such that $\Delta\phi$ is weighted. Then $\tilde\psi=\Delta \tilde\phi$ satisfies ii.
\end{proof}

We can now prove a statement slightly more precise than Theorem \ref{equii}.

\begin{thm}[Reformulation of Theorem \ref{equii}]
\label{equi34}
Let $\varepsilon\ge0$. Then,

\begin{enumerate}
\item
 The map $x \to (x,\gamma)$ defines an injection
$$S_{\indice}(V,\Gamma_{\varepsilon})\hookrightarrow  S_{\indice}(V\times \gamma,\mathcal{O}_{\varepsilon}).$$ 

\item
Let $p\in A^\esse$ be a point of rank $\esse$. Let 
  $\varepsilon \le \varepsilon_0(p)$, where $\varepsilon_0(p)$  is as in Corollary \ref{cor3}. Then, the map $(x,p) \to x$ defines an injection
$$S_{\indice}(V_{K_0}\times
p,\mathcal{O}_{\varepsilon})\hookrightarrow
S_{\indice}\left(V_{K_0},(\Gamma^\g_{p})_{\varepsilon'}\right),$$
where $\varepsilon'$ depends on $\varepsilon$, $p$,
$\g$ and $K_0$.
\end{enumerate}
\end{thm}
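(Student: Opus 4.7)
The plan is to prove the two injections by explicit constructions of target subgroups.

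For part (i), given $x \in S_\indice(V,\Gamma_\varepsilon)$, write $x = b + \gamma^\ast + \xi$ with $b$ in an abelian subgroup $B \subset A^\g$ of codimension $\ge \indice$, $\gamma^\ast \in \Gamma$, and $\|\xi\| \le \varepsilon$. Since $\gamma$ is a set of $\rend$-linearly independent generators of $\Gamma^{\underline{1}}$ and $\gamma^\ast \in \Gamma \subset \Gamma^\g$, there exist $N \in \mathbb{N}^\ast$ and a morphism $H : A^\esse \to A^\g$ with $N\gamma^\ast = H(\gamma)$. The preimage
$$
\tilde B := \{(z,w) \in A^{\g+\esse} \,:\, Nz - H(w) \in B\}
$$
of $B$ under the surjective morphism $(z,w) \mapsto Nz - H(w)$ is an algebraic subgroup of $A^{\g+\esse}$ of the same codimension as $B$, hence $\ge \indice$. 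Since $N(b+\gamma^\ast) - H(\gamma) = Nb \in B$, one has $(b+\gamma^\ast,\gamma) \in \tilde B$, and
$$
(x,\gamma) = (b+\gamma^\ast,\gamma) + (\xi,0) \in \tilde B + \mathcal{O}_\varepsilon,
$$
which places $(x,\gamma)$ in $S_\indice(V \times \gamma, \mathcal{O}_\varepsilon)$.

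For part (ii), let $(x,p) \in S_\indice(V_{K_0}\times p, \mathcal{O}_\varepsilon)$ with $\tilde\phi((x,p)+(\xi_1,\xi_2)) = 0$ for a surjective $\tilde\phi = (\phi|\phi') : A^{\g+\esse} \to A^\erre$ of codimension $\ge \indice$. Invoking Lemma \ref{lemuccio} (valid because $\varepsilon \le \varepsilon_0(p)$ and $p$ has rank $\esse$), the rank of $\phi$ is $\erre$, and after replacing $\tilde\phi$ by $\Delta\tilde\phi$ one may assume $\phi$ is weighted with $\phi \cdot i_\erre = [a]$ and $|\phi| \ll a$; the induced torsion shift is absorbed into $(\xi_1,\xi_2)$ without changing norms. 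Choose $y' \in A^\erre$ with $[a]y' = \phi'(p)$ and set $y = i_\erre(y')$, so $\phi(y) = \phi'(p)$; the equation $\tilde\phi((x,p)+(\xi_1,\xi_2)) = 0$ rewrites as $\phi(x+y) = -\tilde\phi(\xi_1,\xi_2)$. Next, pick $w \in A^\erre$ with $[a]w = -\tilde\phi(\xi_1,\xi_2)$ and set $\xi'' = i_\erre(w)$, giving $\phi(x+y-\xi'') = 0$, hence
$$
x = (x+y-\xi'') + (-y) + \xi'' \in B_\phi + \Gamma_p^\g + \mathcal{O}_{\|\xi''\|},
$$
with $B_\phi$ of codimension $\ge \indice$; the membership $-y \in \Gamma_p^\g$ follows because $[a](-y) = -i_\erre\phi'(p)$ is the image of $p$ under the morphism $-i_\erre \circ \phi' : A^\esse \to A^\g$, so $-y$ lies in the saturation defining $\Gamma_p^\g$.

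The hard part is to bound $\|\xi''\|$ by a quantity $\varepsilon'$ depending only on $\varepsilon, K_0, p, \g$ and uniform in $\tilde\phi$. This is precisely where Corollary \ref{cor3} and the rank-$\esse$ hypothesis on $p$ become essential: the relation $\phi'(p+\xi_2) = -\phi(x+\xi_1)$ gives $\|\phi'(p+\xi_2)\| \le |\phi|(K_0+\varepsilon)$, while Corollary \ref{cor3} yields $c(p)|\phi'| \ll \|\phi'(p+\xi_2)\|$; combining, $|\phi'| \ll |\phi|(K_0+\varepsilon)/c(p)$, so $|\tilde\phi|/a \ll (K_0+\varepsilon)/c(p)$. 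Therefore $\|\xi''\| \ll |\tilde\phi|\varepsilon/a \ll \varepsilon(K_0+\varepsilon)/c(p) =: \varepsilon'$, completing the proof.
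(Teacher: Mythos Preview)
Your proof is correct and follows essentially the same approach as the paper. For part~(ii) your construction of $y$ and $\xi''$ via $i_\erre$, and the norm bound via Corollary~\ref{cor3}, mirror the paper's proof line for line (up to signs). For part~(i) the paper simply cites Proposition~\ref{speciali}, which builds the morphism $\tilde\phi=(N\phi\mid\phi G)$; your pullback subgroup $\tilde B=\{(z,w):Nz-H(w)\in B\}$ is the dual description of the same object, and your formulation has the minor advantage of not needing the restriction to $V_{K_0}$ or $\varepsilon\le K_0$ that appears in Proposition~\ref{speciali}. Two small points worth tightening: in part~(i), $\tilde B$ need not be connected, so strictly one should pass to its identity component and absorb the resulting torsion offset into $\mathcal{O}_\varepsilon$ (harmless, since torsion has norm zero); in part~(ii), Corollary~\ref{cor3} is stated for morphisms into $A^{\underline 1}$, so the bound on $|\phi'|$ should be obtained row by row as the paper does, and the final estimate on $|\tilde\phi|/a$ should carry a $\max(1,(K_0+\varepsilon)/c(p))$ rather than just $(K_0+\varepsilon)/c(p)$.
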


\begin{proof}
 Part i. is an immediate consequence of Proposition \ref{speciali}.

ii) Let $(x,p) \in S_{\indice}(V_{K_0}\times p,\mathcal{O}_{\varepsilon})$.
Then,  there exists a block matrix $\tilde\phi=[\tilde\phi_1,\dots
,\tilde\phi_n]$ of rank $\erre$ with $k\le \sum_id_ir_i$, and $(\zeta,\zeta')\in \mathcal{O}_\varepsilon$ such that 
 \begin{equation}
 \label{tr}\tilde\phi((x,p)+(\zeta,\zeta'))=0.
 \end{equation}
In view of Lemma \ref{lemuccio}, we can assume that $\tilde\phi=(\phi|\phi')$ with $\phi$ weighted. Let $aI_\erre$ be a submatrix of $\phi$ with $|\phi|\ll a$ and $i_\erre:A^\erre \to A^\g$ such that $\phi \cdot i_\erre=[a]$.

 We want to  show that $|\phi'|\ll | \phi|$. Let $l$ and $j$ be indices such  that $|\phi'|=|\phi'_{lj}|$.
Consider the $l$-th row of the equation (\ref{tr}). For $\varphi_l$ and $\varphi'_l$ the $l$-th rows of $\phi$ and $\phi'$ respectively, we have
\begin{equation*}
|| \varphi_l(x+\xi)|| = ||\varphi'_l(p+\xi')||.
\end{equation*}

Then 
$$| \phi|(||x||+||\xi||)\gg ||\varphi_l(x+\xi)|| = ||\varphi'_l(p+\xi')||.$$

  By assumption $||{{x}}||\le K_0$ and $||\xi||\le\varepsilon$. So
  $$ | \phi|(K_0+\varepsilon)\gg  ||\varphi'_l(p+\xi')||.$$

  By Corollary  \ref{cor3}  applied with $\phi=\varphi'_l$  and $\xi=-\xi'$, we deduce 
  $$ | \phi|(K_0+\varepsilon)\gg 
  c(p)|\varphi'_l|.$$
  Whence
  \begin{equation}
  \label{rr1}|\phi'|=|\varphi'_l|\ll  | \phi| \frac{(K_0+\varepsilon)}{
  c(p)} \ll a\frac{(K_0+\varepsilon)}{
  c(p)}. \end{equation}
  
  We define 
\begin{equation*}\begin{split}
[a](y')&=\phi'({{p}})  \,\,\, {\rm{and}} \,\,\,\, y=i_\erre(y')\in \Gamma_p^\g,\\
[a](\upxi')&=\tilde\phi(\xi,\xi')\,\,\, {\rm{and}} \,\,\,\,\upxi=i_\erre(\upxi')\in A^\g,\\
 \end{split}\end{equation*}
Then  \begin{equation*}
\phi({{x}}+y+{\upxi})=0,
\end{equation*} for $y\in \Gamma_p^\g$. We shall still show that $||\xi||\le \varepsilon'$.
By relation (\ref{rr1}),
 $\frac{| \tilde\phi|}{a}\ll \max\left(1,\frac{(K_0+\varepsilon)}{
  c(p)} \right)$ and $||(\xi,\xi')||\le
{\varepsilon}$. We then obtain $$||\upxi||\ll
\frac{| \tilde\phi|}{a}||(\xi,\xi')||\ll
\max\left(1,\frac{(K_0+\varepsilon)}{
  c(p)} \right).$$

We conclude that
$$x\in V_{K_0}\cap (B_{\phi}+\Gamma^\g_p+\mathcal{O}_{\varepsilon'}),$$
with            $||\upxi||\le \varepsilon'\ll \varepsilon\max(1,\frac{(K_0+\varepsilon)}{
  c(p)} )$.

\end{proof}

\section{The Proof of the Main Theorem}
\label{dimmain}
\begin{proof}[Proof of Theorem \ref{main}]

Thanks to Theorem \ref{equii}, to prove Theorem \ref{main}  is sufficient to prove that Conjecture \ref{bofu1} implies Conjecture \ref{nd} ii.
Furthermore, in view of Lemma \ref{stabi}, we can assume that $V$ has finite stabilizer.

 Let $\gamma$ be a set of generators of $\Gamma^{\underline{1}}$ satisfying relation (\ref{gammagrande}) for $\varepsilon\le K_0$.  Note that $\Gamma \subset \Gamma^\g$. Let $\varepsilon \le \min(K_0,\varepsilon_0(\gamma))$ where $\varepsilon_0(\gamma)$ is defined as in Corollary \ref{cor3}.

By Lemma  \ref{peso} ii., for all $\varepsilon\ge0$,
$$ S_{d+1}(V_\cbh, \Gamma_\varepsilon) \subset \Big(V_\cbh \cap \bigcup_{\substack{\phi\,\,\,\,{\rm{weighted}}\\
{\rm{cod \phi}}\ge d+1}}\left(B_\phi+(\Gamma^{\g})_{\varepsilon} \right)\Big) .$$

By Proposition \ref{speciali}, for all $\varepsilon\le K_0$,
$$\Big(V_\cbh \cap \bigcup_{\substack{\phi\,\,\,\,{\rm{weighted}}\\
{\rm{cod \phi}}\ge d+1}}\left(B_\phi+(\Gamma^{\g})_{\varepsilon} \right)\Big)\hookrightarrow  \Big(
(V_\cbh\times \gamma) \cap \bigcup_{\substack{\tilde\phi=(\phi|\phi')\,\,\,\,{\rm{special}}\\
{\rm{cod \tilde\phi}}\ge d+1}}\left(B_{\tilde\phi}+\mathcal{O}_{\varepsilon} \right)\Big).$$

By Theorem \ref{centro}, for $\varepsilon>0$, there exist  a positive real   $M$ such that 
$$ \bigcup_{\substack{\tilde\phi=(\phi|\phi')\,\,\,\,{\rm{special}}\\
{\rm{cod \phi}}\ge d+1}}\left((V_\cbh\times \gamma) \cap\left(B_{\tilde\phi}+\mathcal{O}_{\varepsilon/M }\right)\right) \subset
 \bigcup_{\substack{\tilde\phi=(\phi|\phi')\,\,\,\,{\rm{special}}\\
{\rm{cod \tilde\phi}}\ge d+1\\ |\tilde\phi|\ll M}}\left((V_\cbh\times \gamma) \cap\left(B_{\tilde\phi}+\mathcal{O}_{\varepsilon'/|\tilde\phi|}\right)\right).$$
Note that on the right the union is over finitely many sets, because $|\tilde\phi|\ll M$.

 Let $\equattro$ be as in Corollary \ref{fine}. Choose $\varepsilon'\le \equattro$ (and consequently choose $\varepsilon$). Note that $|\phi|\le |\tilde\phi|$. By Corollary \ref{fine}, 
$$(V_\cbh\times \gamma) \cap \left(B_{\tilde\phi}+\mathcal{O}_{\varepsilon'/|\tilde\phi| }\right)$$
is non Zariski-dense in $V\times \gamma$.

We conclude that $ S_{d+1}(V_\cbh, \Gamma_{\varepsilon/M})$ is embedded in a finite union of non Zariski-dense sets.

\end{proof}
\newpage

\vskip1cm


\begin{thebibliography}{mic}
\bibitem{fra1} F. Amoroso and S. David {\em Le probl\`eme de Lehmer en dimension sup\'erieure}. J. reine angew. Math. 513 (1999), p. 145-179. 






\bibitem{BMZ} E. Bombieri, D. Masser and  U. Zannier, {\em Intersecting
a curve with algebraic subgroups of multiplicative groups.}
IMRN 20. (1999), pages1119--1140.


\bibitem{BMZ1}E. Bombieri and  U. Zannier, {\em Algebraic points on subvarieties of $G^n_m$}. Internat. Math. Res. Notices, 7 ( 1995), pages 333-347. 

\bibitem{BMZ2} E. Bombieri and  U. Zannier, {\em Heights of algebraic points on suvarieties of abelian varieties}. Ann. Scuola Norm. Pisa, 23  (1996), pages 779-792. 


\bibitem{sinh}

  S. David and  M. Hindry, {\em Minoration de la hauteur de
    N\'eron-Tate sur le vari\'et\'es ab\'eliennes de type C.M.}.
  J.  reine  angew. Math. 529, (2000), pages 1-74.



\bibitem{sipa}  S. David.  and P. Philippon, {\em Minorations des hauteurs normalis\'ees des sous-vari\'et\'es de vari\'et\'es ab\'eliennes II.} Comment. Math. Helv. 77 (2002), no. 4, pages 639-700.


\bibitem{spec}  S. David.  and P. Philippon, {\em Minorations des hauteurs normalis\'ees des sous-vari\'et\'es des puissances des courbes elliptiques.} IMRP (2007) Vol. 2007, 113 pages.



\bibitem{l-b} H. Lange and Ch. Birkenhake, {\em Complex Abelian Varieties}. Springer-Verlag, Berlin-Heidelberg, 1992.

\bibitem{mw} D. Masser and G. W\"ustholz, {\em Isogeny estimates for abelian varieties, and finiteness theorems.} Ann. Math. 137, (1993), pages 459-472.





\bibitem{poonen} B. Poonen, {\em Mordell-Lang plus Bogomolov}.
Invent. math. 137 (1999),  p. 413--425.

\bibitem{S} W. M. Schmidt,
 {\em Diophantine Approximation.}
Springer LNM 1980.
 
 
 

\bibitem{io} E. Viada, {\em  The intersection of a curve with a union of codimension-two traslated subgroups in a power of an elliptic curve}.   To appear in Algebra and Number Theory, 50 pages.

\bibitem{irmn} E. Viada, {\em Non-dense subsets of varieties in a power of an elliptic curve.} Submited.


\bibitem{zang} S. Zhang, {\em Equidistribution of small points on abelian varieties.} Ann. of Math. 147 (1998), no. 1, pages 159-165.
\end{thebibliography}
\end{document}